\documentclass[12pt]{amsart}
\usepackage{txfonts}      %{article} was 12pt latex e
\usepackage{amssymb}
\usepackage{eucal}
\usepackage{amsmath}
\usepackage{amscd}
\usepackage{xcolor}
\usepackage{multicol}
\usepackage[all]{xy}           %xypic macro for latex2.09
\usepackage{graphicx}
\usepackage{color}
\usepackage{colordvi}
\usepackage{xspace}
\usepackage{tikz}
\usepackage{makecell}
\usepackage{appendix}
\usepackage{amsthm}

\usepackage{ifpdf}
\ifpdf
\usepackage[colorlinks,final,backref=page,hyperindex]{hyperref}
\else
\usepackage[colorlinks,final,backref=page,hyperindex,hypertex]{hyperref}
\fi

\usepackage[active]{srcltx} %SRC Specials for DVI Searching

\begin{document}

%%%%%%%%%%%%%%%%%%%%%%%% Statements

\newtheorem{thm}{Theorem}[section]
\newtheorem{lem}[thm]{Lemma}
\newtheorem{cor}[thm]{Corollary}
\newtheorem{pro}[thm]{Proposition}
\theoremstyle{definition}
\newtheorem{defi}[thm]{Definition}
\newtheorem{ex}[thm]{Example}
\newtheorem{rmk}[thm]{Remark}
\newtheorem{pdef}[thm]{Proposition-Definition}
\newtheorem{condition}[thm]{Condition}

\renewcommand{\labelenumi}{{\rm(\alph{enumi})}}
\renewcommand{\theenumi}{\alph{enumi}}

\newcommand {\emptycomment}[1]{} %to remove paragraphs

\newcommand{\nc}{\newcommand}
\newcommand{\delete}[1]{}

\nc{\tred}[1]{\textcolor{red}{#1}}
\nc{\tblue}[1]{\textcolor{blue}{#1}}
\nc{\tgreen}[1]{\textcolor{green}{#1}}
\nc{\tpurple}[1]{\textcolor{purple}{#1}}
\nc{\tgray}[1]{\textcolor{gray}{#1}}
\nc{\torg}[1]{\textcolor{orange}{#1}}
\nc{\tmag}[1]{\textcolor{magenta}}
\nc{\btred}[1]{\textcolor{red}{\bf #1}}
\nc{\btblue}[1]{\textcolor{blue}{\bf #1}}
\nc{\btgreen}[1]{\textcolor{green}{\bf #1}}
\nc{\btpurple}[1]{\textcolor{purple}{\bf #1}}

\nc{\revise}[1]{\textcolor{blue}{#1}}

%%%%%%%% new symbols

\nc{\tforall}{\ \ \text{for all }}
\nc{\hatot}{\,\widehat{\otimes} \,}
\nc{\complete}{completed\xspace}
\nc{\wdhat}[1]{\widehat{#1}}

\nc{\ts}{\mathfrak{p}}
\nc{\mts}{c_{(i)}\ot d_{(j)}}

\nc{\NA}{{\bf NA}}
\nc{\LA}{{\bf Lie}}
\nc{\CLA}{{\bf CLA}}

\nc{\cybe}{CYBE\xspace}
\nc{\nybe}{NYBE\xspace}
\nc{\ccybe}{CCYBE\xspace}

\nc{\preliecom}{pre-Lie commutative\xspace}
\nc{\transpreliecom}{transposed pre-Lie commutative\xspace}
\nc{\transNovikovpoisson}{transposed Novikov-Poisson\xspace}
\nc{\transdiffnovikovpoisson}{transposed differential Novikov-poisson\xspace}
\nc{\diffnovikovpoisson}{differential Novikov-Poisson\xspace}
\nc{\preliepoisson}{pre-Lie Poisson\xspace}
\nc{\transpreliepoisson}{transposed pre-Lie poisson\xspace}

\nc{\calb}{\mathcal{B}}
\nc{\rk}{\mathrm{r}}
\newcommand{\g}{\mathfrak g}
\newcommand{\h}{\mathfrak h}
\newcommand{\pf}{\noindent{$Proof$.}\ }
\newcommand{\frkg}{\mathfrak g}
\newcommand{\frkh}{\mathfrak h}
\newcommand{\Id}{\rm{Id}}
\newcommand{\gl}{\mathfrak {gl}}
\newcommand{\ad}{\mathrm{ad}}
\newcommand{\add}{\frka\frkd}
\newcommand{\frka}{\mathfrak a}
\newcommand{\frkb}{\mathfrak b}
\newcommand{\frkc}{\mathfrak c}
\newcommand{\frkd}{\mathfrak d}
\newcommand {\comment}[1]{{\marginpar{*}\scriptsize\textbf{Comments:} #1}}

\nc{\vspa}{\vspace{-.1cm}}
\nc{\vspb}{\vspace{-.2cm}}
\nc{\vspc}{\vspace{-.3cm}}
\nc{\vspd}{\vspace{-.4cm}}
\nc{\vspe}{\vspace{-.5cm}}

%%%%%%%%%%%%%%%%%%%%%%% old symbols

\nc{\disp}[1]{\displaystyle{#1}}
\nc{\bin}[2]{ (_{\stackrel{\scs{#1}}{\scs{#2}}})}  %binomial coeff
\nc{\binc}[2]{ \left (\!\! \begin{array}{c} \scs{#1}\\
    \scs{#2} \end{array}\!\! \right )}  %binomial coeff
\nc{\bincc}[2]{  \left ( {\scs{#1} \atop
    \vspace{-.5cm}\scs{#2}} \right )}  %binomial coeff
\nc{\ot}{\otimes}
\nc{\sot}{{\scriptstyle{\ot}}}
\nc{\otm}{\overline{\ot}}
\nc{\ola}[1]{\stackrel{#1}{\la}}%${\Bbb Z}$

\nc{\scs}[1]{\scriptstyle{#1}} \nc{\mrm}[1]{{\rm #1}}

\nc{\dirlim}{\displaystyle{\lim_{\longrightarrow}}\,}
\nc{\invlim}{\displaystyle{\lim_{\longleftarrow}}\,}

\nc{\bfk}{{\bf k}} \nc{\bfone}{{\bf 1}}
\nc{\rpr}{\circ}
%\nc{\apr}{\cdot}
\nc{\dpr}{{\tiny\diamond}}
\nc{\rprpm}{{\rpr}}

%%%%%%%%%%%%%%%%%%%%% roman fonts, in alphabetic order
\nc{\mmbox}[1]{\mbox{\ #1\ }} \nc{\ann}{\mrm{ann}}
\nc{\Aut}{\mrm{Aut}} \nc{\can}{\mrm{can}}
\nc{\twoalg}{{two-sided algebra}\xspace}
\nc{\colim}{\mrm{colim}}
\nc{\Cont}{\mrm{Cont}} \nc{\rchar}{\mrm{char}}
\nc{\cok}{\mrm{coker}} \nc{\dtf}{{R-{\rm tf}}} \nc{\dtor}{{R-{\rm
tor}}}
\renewcommand{\det}{\mrm{det}}
\nc{\depth}{{\mrm d}}
\nc{\End}{\mrm{End}} \nc{\Ext}{\mrm{Ext}}
\nc{\Fil}{\mrm{Fil}} \nc{\Frob}{\mrm{Frob}} \nc{\Gal}{\mrm{Gal}}
\nc{\GL}{\mrm{GL}} \nc{\Hom}{\mrm{Hom}} \nc{\hsr}{\mrm{H}}
\nc{\hpol}{\mrm{HP}}  \nc{\id}{\mrm{id}} \nc{\im}{\mrm{im}}

\nc{\incl}{\mrm{incl}} \nc{\length}{\mrm{length}}
\nc{\LR}{\mrm{LR}} \nc{\mchar}{\rm char} \nc{\NC}{\mrm{NC}}
\nc{\mpart}{\mrm{part}} \nc{\pl}{\mrm{PL}}
\nc{\ql}{{\QQ_\ell}} \nc{\qp}{{\QQ_p}}
\nc{\rank}{\mrm{rank}} \nc{\rba}{\rm{RBA }} \nc{\rbas}{\rm{RBAs }}
\nc{\rbpl}{\mrm{RBPL}}
\nc{\rbw}{\rm{RBW }} \nc{\rbws}{\rm{RBWs }} \nc{\rcot}{\mrm{cot}}
\nc{\rest}{\rm{controlled}\xspace}
\nc{\rdef}{\mrm{def}} \nc{\rdiv}{{\rm div}} \nc{\rtf}{{\rm tf}}
\nc{\rtor}{{\rm tor}} \nc{\res}{\mrm{res}} \nc{\SL}{\mrm{SL}}
\nc{\Spec}{\mrm{Spec}} \nc{\tor}{\mrm{tor}} \nc{\Tr}{\mrm{Tr}}
\nc{\mtr}{\mrm{sk}}

%%%%%%%%%%%%%%%%%% bold face
\nc{\ab}{\mathbf{Ab}} \nc{\Alg}{\mathbf{Alg}}

%%%%%%%%%%%%%%%%%%%Bbb fonts
\nc{\BA}{{\mathbb A}} \nc{\CC}{{\mathbb C}} \nc{\DD}{{\mathbb D}}
\nc{\EE}{{\mathbb E}} \nc{\FF}{{\mathbb F}} \nc{\GG}{{\mathbb G}}
\nc{\HH}{{\mathbb H}} \nc{\LL}{{\mathbb L}} \nc{\NN}{{\mathbb N}}
\nc{\QQ}{{\mathbb Q}} \nc{\RR}{{\mathbb R}} \nc{\BS}{{\mathbb{S}}} \nc{\TT}{{\mathbb T}}
\nc{\VV}{{\mathbb V}} \nc{\ZZ}{{\mathbb Z}}

%%%%%%%%%%%%%%%%%%% cal fonts

\nc{\calao}{{\mathcal A}} \nc{\cala}{{\mathcal A}}
\nc{\calc}{{\mathcal C}} \nc{\cald}{{\mathcal D}}
\nc{\cale}{{\mathcal E}} \nc{\calf}{{\mathcal F}}
\nc{\calfr}{{{\mathcal F}^{\,r}}} \nc{\calfo}{{\mathcal F}^0}
\nc{\calfro}{{\mathcal F}^{\,r,0}} \nc{\oF}{\overline{F}}
\nc{\calg}{{\mathcal G}} \nc{\calh}{{\mathcal H}}
\nc{\cali}{{\mathcal I}} \nc{\calj}{{\mathcal J}}
\nc{\call}{{\mathcal L}} \nc{\calm}{{\mathcal M}}
\nc{\caln}{{\mathcal N}} \nc{\calo}{{\mathcal O}}
\nc{\calp}{{\mathcal P}} \nc{\calq}{{\mathcal Q}} \nc{\calr}{{\mathcal R}}
\nc{\calt}{{\mathcal T}} \nc{\caltr}{{\mathcal T}^{\,r}}
\nc{\calu}{{\mathcal U}} \nc{\calv}{{\mathcal V}}
\nc{\calw}{{\mathcal W}} \nc{\calx}{{\mathcal X}}
\nc{\CA}{\mathcal{A}}

%%%%%%%%%%%%%%%%%%  frak fonts
\nc{\fraka}{{\mathfrak a}} \nc{\frakB}{{\mathfrak B}}
\nc{\frakb}{{\mathfrak b}} \nc{\frakd}{{\mathfrak d}}
\nc{\oD}{\overline{D}}
\nc{\frakF}{{\mathfrak F}} \nc{\frakg}{{\mathfrak g}}
\nc{\frakm}{{\mathfrak m}} \nc{\frakM}{{\mathfrak M}}
\nc{\frakMo}{{\mathfrak M}^0} \nc{\frakp}{{\mathfrak p}}
\nc{\frakS}{{\mathfrak S}} \nc{\frakSo}{{\mathfrak S}^0}
\nc{\fraks}{{\mathfrak s}} \nc{\os}{\overline{\fraks}}
\nc{\frakT}{{\mathfrak T}}
\nc{\oT}{\overline{T}}
%\nc{\frakx}{{\mathfrak x}}
\nc{\frakX}{{\mathfrak X}} \nc{\frakXo}{{\mathfrak X}^0}
\nc{\frakx}{{\mathbf x}}
%\nc{\frakTxo}{{\frakTx}^0}
\nc{\frakTx}{\frakT}      %All rooted trees, correspond to \ncsha(X)
\nc{\frakTa}{\frakT^a}        % rooted trees for \ncsha(A)
\nc{\frakTxo}{\frakTx^0}   % rooted trees for \ncshao(X)
\nc{\caltao}{\calt^{a,0}}   % rooted trees for \ncshao(A)
\nc{\ox}{\overline{\frakx}} \nc{\fraky}{{\mathfrak y}}
\nc{\frakz}{{\mathfrak z}} \nc{\oX}{\overline{X}}

%%%%%%%%%%%%%%%%%%%%%%%%%%%%%%%%%%%%%%%%%%%%%%%%%%%%%%%%%%%%%%%%%%

\title[Transposed Novikov-Poisson algebras]{Transposed Novikov-Poisson algebras}

\author{Jiarou Jin}
\address{School of Mathematics, Hangzhou Normal University,
Hangzhou, 311121, China}
\email{jrjin@stu.hznu.edu.cn}

\author{Yanyong Hong (corresponding author)}
\address{School of Mathematics, Hangzhou Normal University,
Hangzhou, 311121, China}
\email{yyhong@hznu.edu.cn}

\subjclass[2010]{17B63, 17D25, 17A30, 17A36}

\keywords{transposed Novikov-Poisson algebra, transposed Poisson algebra, Novikov-Poisson algebra, $\frac{1}{2}$-derivation}

\begin{abstract}
In this paper, we introduce the definition of transposed Novikov-Poisson algebras, whose affinization are transposed Poisson algebras. Moreover, we show that there is a natural transposed Poisson algebra structure on the tensor product of a transposed Novikov-Poisson algebra and a right differential Novikov-Poisson algebra. A transposed Poisson algebra also naturally arises from a transposed Novikov-Poisson algebra by taking the commutator Lie algebra of the Novikov algebra. We show that the tensor products of two transposed Novikov-Poisson algebras are also transposed Novikov-Poisson algebras. Several constructions of transposed Novikov-Poisson algebras are presented. Moreover, transposed Novikov-Poisson algebras are closely related to $\frac{1}{2}$-derivations of the associated Novikov algebras. By using $\frac{1}{2}$-derivations, we show that there are non-trivial transposed Novikov-Poisson algebra structures on solvable Novikov algebras with some conditions. We also prove that if a non-trivial transposed Novikov-Poisson algebra is simple, then the associated Novikov algebra is simple. Therefore, if the base field is  algebraically closed and of characteristic 0, then any simple transposed Novikov-Poisson is of dimension $1$.  Transposed Novikov-Poisson algebra structures on some simple Novikov algebras are also characterized.
\end{abstract}

\maketitle

\vspace{-1.2cm}

\tableofcontents

\vspace{-1.2cm}

\allowdisplaybreaks

\section{Introduction}
\vspace{-.2cm}

%In this paper, we introduce a new algebraic structure, which is obtained via the affinization of transposed Poisson algebras.
%\subsection{Poisson algebras, transposed Poisson algebras and Novikov-Poisson algebras}\

Poisson algebras originated in the 1970s from the study of Poisson geometry \cite{poisson1,poisson2}, which now serve as essential tools in fields such as symplectic geometry, quantization theory, integrable systems, and quantum groups. \delete{The triple \( (L, \cdot, [\cdot,\cdot]) \) is called a {\bf Poisson algebra} if \( (L, \cdot) \) is a commutative associative algebra, \( (L, [\cdot,\cdot]) \) is a Lie algebra and they satisfy the following compatibility condition:
\begin{eqnarray}
&&\label{transposedpoissonlu}[x, z\cdot y] = z\cdot [x, y] + [x, z ]\cdot y \quad \text{for all}\: x, y, z \in L.
\end{eqnarray}
\quad }
A dual notion of the Poisson algebra called transposed Poisson algebra was introduced in \cite{Bai} by exchanging the roles of the two binary operations in the Leibniz rule defining the Poisson algebra. More precisely, a triple \( (L, \cdot, [\cdot,\cdot]) \) is called a {\bf transposed Poisson algebra}, if \( (L, \cdot) \) is a commutative associative algebra, \( (L, [\cdot,\cdot]) \) is a Lie algebra and the following compatibility condition holds:
\begin{eqnarray}
&&\label{transposedpoissonlu}2z \cdot [x, y] = [z \cdot x, y] + [x, z \cdot y] \quad \text{for all}\: x, y, z \in L.
\end{eqnarray}
As shown in \cite{Bai}, transposed Poisson algebras share many common properties of Poisson algebras such as closure undertaking tensor products and Koszul self-duality as an operad, and are closely related with other algebraic structures including Novikov-Poisson algebras and 3-Lie algebras.\delete{
arises naturally from a Novikov-Poisson algebra by taking the commutator Lie algebra of the Novikov algebra. Moreover, there are close relationships between transposed Poisson algebras and 3-Lie algebras (see \cite{Bai}).} They also coincide with commutative Gel'fand-Dorfman algebras \cite{S} and are instances of algebras of Jordan brackets \cite{simple-simple}. Furthermore, the classical limit of a Novikov deformation of a commutative associative algebra yields a transposed Poisson algebra \cite{CB}.
A significant connection between $\frac{1}{2}$-derivations of Lie
 algebras and transposed Poisson algebras was established in \cite{1-2}. This connection has been used to classify all transposed Poisson algebra structures on various important classes of Lie algebras, including finite-dimensional complex semisimple Lie algebras \cite{1-2}, Witt algebra \cite{1-2}, Virasoro algebra \cite{1-2},  twisted Heisenberg-Virasoro algebra \cite{YH}, Schr\"odinger-Virasoro algebra \cite{YH}, solvable and perfect Lie algebras \cite{KK}, Virasoro-type algebras \cite{KKS, ZSZ} and Lie incidence algebras \cite{KK2}. Simple transposed Poisson algebras have been studied in \cite{simple-simple} and a bialgebra theory of such algebras was given in \cite{LB}.

\delete{ A Novikov algebra was introduced from Hamiltonian operators in variational calculus~\cite{GD1, GD2} and Poisson brackets of hydrodynamic type~\cite{BN}.}

Novikov algebras first appeared in the study of Hamiltonian
operators in the formal variational calculus \cite{GD1, GD2} and Poisson brackets of hydrodynamic type \cite{BN}. By the result in \cite{X1}, they also correspond to a class of Lie conformal algebras which describe the singular part of operator product expansion of chiral fields in conformal field
theory \cite{K1}.
A {\bf (left) Novikov algebra} $(A,\circ)$ is a vector space $A$ equipped with a binary operation $\circ:A \otimes A \to A$ satisfying the following compatibility conditions:
\begin{eqnarray}
&\label{Novikov1}(x\circ y)\circ z=(x\circ z)\circ y,\\
&\label{Novikov2}(x\circ y)\circ z-x\circ (y\circ z)=(y\circ x)\circ z-y\circ (x\circ z)\;\;\;\text{for all $x$, $y$, $z\in A$.}
\end{eqnarray}
Let $B$ be a vector space with a binary operation $\diamond$. If $(B, \circ)$ is a Novikov algebra with $a\circ b:=b\diamond a$ for all $a$, $b\in B$, then $(B, \diamond)$ is called a {\bf right Novikov algebra}. By Eq. (\ref{Novikov2}), Novikov algebras are also an important subclass of pre-Lie algebras
which have close relationships with many fields in mathematics and physics such as convex homogeneous cones, affine manifolds and affine structures on Lie groups, deformation of
associative algebras, vertex algebras and so on.

The Novikov algebra also shows it significance in its role in affinization.
%The affinization of a Novikov algebra is obtained by taking its tensor product with \(k[t, t^{-1}]\), which naturally induces a Lie algebra structure.
\begin{thm}~\cite{BN}\label{thmm0}
Let {\bf k} be a field of characteristic 0 and $A$ be a vector space over {\bf k} endowed with a binary operation $\circ$. Define a binary operation on $A[t,t^{-1}]\coloneqq A\otimes \bfk[t,t^{-1}]$ by
\begin{eqnarray*}
&&[xt^m, yt^n]=m(x\circ y)t^{m+n-1}-n(y\circ x)t^{m+n-1},
\end{eqnarray*}
for all $x,y\in A$ and $m, n\in\mathbb{Z}$, where $x t^m:=x\otimes t^{m}$. Then $(A[t,t^{-1}],[\cdot,\cdot])$ is a Lie algebra if and only if $(A, \circ)$ is a Novikov algebra.
\end{thm}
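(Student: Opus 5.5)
Skew-symmetry is automatic from the defining formula: interchanging $(x,m)$ with $(y,n)$ negates the right-hand side. So $A[t,t^{-1}]$ is a Lie algebra exactly when the Jacobi identity holds. The plan is to expand the Jacobiator on monomials $xt^m, yt^n, zt^l$ as a polynomial in $m,n,l$ with coefficients in $A$, all multiplying $t^{m+n+l-2}$. We then identify its coefficients with the Novikov identities (\ref{Novikov1}) and (\ref{Novikov2}).

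First compute
\[
[[xt^m,yt^n],zt^l]=(m+n-1)\big(m(x\circ y)-n(y\circ x)\big)\circ z\,t^{m+n+l-2}-l\,z\circ\big(m(x\circ y)-n(y\circ x)\big)\,t^{m+n+l-2}.
\]
Summing cyclically over $(x,m),(y,n),(z,l)$ gives the Jacobiator $J$. Its coefficients fall into two groups.

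The quadratic terms come only from $(m+n-1)\big(m(x\circ y)-n(y\circ x)\big)\circ z$ and from $-l\,z\circ(\cdots)$. Collecting, say, the coefficient of $m^2$ gives $(x\circ y)\circ z-(x\circ z)\circ y$, up to sign. The coefficient of $mn$ collects $(x\circ y)\circ z-(y\circ x)\circ z-x\circ(y\circ z)+y\circ(x\circ z)$ together with terms involving $z$, which one checks are of the form $(z\circ \cdot)\circ\cdot$ differences. Modulo (\ref{Novikov1}), this is exactly the left-symmetry combination of (\ref{Novikov2}). The linear terms, coming from the $-1$ in $(m+n-1)$, are $-m\big((x\circ y)\circ z\big)+n\big((y\circ x)\circ z\big)$ plus their cyclic shifts. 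For each variable, e.g.\ $m$, these yield $(x\circ z)\circ y-(x\circ y)\circ z$, which is again (\ref{Novikov1}).

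For the ``if'' direction, assume $A$ is Novikov. Each coefficient of $J$ then vanishes by (\ref{Novikov1}) and (\ref{Novikov2}), so $J=0$. For the ``only if'' direction, assume $J=0$ for all $m,n,l\in\ZZ$. Since the characteristic is $0$, a polynomial in $m,n,l$ vanishing on all of $\ZZ^3$ has zero coefficients. In practice it is enough to use specific choices. Taking $(m,n,l)=(1,0,0)$ makes $J$ reduce to a multiple of $(x\circ y)\circ z-(x\circ z)\circ y$, which yields (\ref{Novikov1}). Taking then $(1,1,0)$, or extracting the $mn$ coefficient, yields (\ref{Novikov2}) once (\ref{Novikov1}) is known.

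The main obstacle is the bookkeeping in the cubic expansion: verifying that the $mn$ and $l^2$-type coefficients reduce exactly to (\ref{Novikov2}) and (\ref{Novikov1}), with no extra condition appearing. I would organize this by symmetry. By cyclic symmetry only the coefficients of $m^2$, $mn$, $m$ and the constant need checking. The constant term is zero.
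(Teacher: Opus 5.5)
Your main route is sound. Expand the Jacobiator $J$ as a polynomial in $m,n,l$ with coefficients in $A$, then use characteristic $0$ to force every coefficient to vanish; this proves both directions. The paper gives no proof of this statement and simply cites \cite{BN}, but it uses the same coefficient comparison for Theorem~\ref{thmm2}. However, the concrete substitution you propose for \eqref{Novikov1} fails, and your description of the $mn$ coefficient is inaccurate. Write $F(x;y,z)=(x\circ y)\circ z-(x\circ z)\circ y$ and $N(x,y,z)=(x\circ y)\circ z-(y\circ x)\circ z-x\circ(y\circ z)+y\circ(x\circ z)$. Then the expansion is exactly
\[
J=\big(m(m-1)F(x;y,z)+n(n-1)F(y;z,x)+l(l-1)F(z;x,y)+mn\,N(x,y,z)+nl\,N(y,z,x)+lm\,N(z,x,y)\big)t^{m+n+l-2}.
\]

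The quadratic and linear terms of type \eqref{Novikov1} combine into the factor $m(m-1)$. So at $(m,n,l)=(1,0,0)$ the Jacobiator vanishes identically, for every binary operation $\circ$, and this choice gives no information. You can see this directly: $[[xt,yt^0],zt^0]=[(x\circ y)t^0,zt^0]=0$, $[yt^0,zt^0]=0$, and $[[zt^0,xt],yt^0]=[-(x\circ z)t^0,yt^0]=0$. You need $m\notin\{0,1\}$. For example, $(2,0,0)$ gives $J=2F(x;y,z)$ and hence \eqref{Novikov1}.

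Likewise, the $mn$ coefficient contains no extra $z$-terms of the form you describe: it is exactly $N(x,y,z)$. So $(1,1,0)$ yields \eqref{Novikov2} directly, without first invoking \eqref{Novikov1}. With the displayed formula both directions are immediate. As written, your ``in practice'' derivation of \eqref{Novikov1} is wrong. Replace it either by the general coefficient argument you already state or by a correct substitution such as $(2,0,0)$.
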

\noindent This construction yields many important infinite-dimensional Lie algebras in mathematical physics, such as the Witt algebra, the centerless Heisenberg-Virasoro algebra, the centerless Schr\"odinger-Virasoro algebra and so on. The algebraic underpinning of this affinization is further clarified by the Koszul dual relationship between the Novikov and right Novikov operads \cite{operad1,operad2}.

\delete{Note that the affinization of Novikov algebras admits an operadic interpretation through the Koszul dual relationship between Novikov and right Novikov operads~\cite{operad1,operad2}. The affinization of Novikov algebras gives many of infinite-dimensional Lie algebras
which are important in mathematical physics, such as Witt algebra, centerless Heisenberg-Virasoro algebra, centerless Schr\"odinger-Virasoro algebra and so on. Motivated by the works on classifying transposed Poisson algebras on infinite-dimensional Lie algebras, it is meaningful to consider those transposed Poisson algebras which are affinizations of some algebra structures.}

Now, consider a vector space $A$ with two binary operations $\cdot,\circ$. One can define both a Lie bracket $[\cdot,\cdot]$ on $A[t,t^{-1}]$ via the Novikov product $\circ$ as above, and a commutative associative product via:
\begin{eqnarray*}
&&\label{luolang1}(xt^m)\cdot (yt^n)=(x\cdot y)t^{m+n}\;\;\text{for all $x$, $y\in A$, $m$, $n\in \mathbb{Z}$.}
\end{eqnarray*}
\delete{Obviously, $(A,\cdot)$ is a commutative associative algebra if and only if $(A[t,t^{-1}],\cdot)$ is a commutative associative algebra. We call that $(A[t,t^{-1}],\cdot)$ is an affinization of $(A,\cdot)$. Moreover,} By the result in \cite{LZ}, $(A[t,t^{-1}],\cdot, [\cdot,\cdot])$ is a Poisson algebra if and only if $(A, \cdot, \circ)$ is a differential Novikov-Poisson algebra, which was introduced in~\cite{BCZ}. Note that differential Novikov-Poisson algebras are a subclass of Novikov-Poisson algebras introduced in \cite{Xu1}.

This background motivates a natural question: what is the corresponding algebraic structure on $A$ that induces a transposed Poisson algebra under the same affinization process? To answer this, we introduce the following notion.
\delete{A natural question arises: what is the corresponding algebraic structure for transposed Poisson algebras under the affinization? This motivates us to introduce the following notion.}
\begin{defi}
The triple $(A,\cdot,\circ)$ is called a {\bf \transNovikovpoisson algebra} if $(A,\cdot)$ is a commutative associative algebra, $(A,\circ)$ is a Novikov algebra, and  the following compatibility conditions hold:
\begin{eqnarray}
&\label{transNP1}\quad(x\cdot y)\circ z=(x\cdot z)\circ y,\\
&\label{transNP2}2z\cdot(x\circ y)=(z\cdot x)\circ y+x\circ(z\cdot y)\;\;\;\text{for all $x$, $y$, $z\in A$.}
\end{eqnarray}
\end{defi}
\delete{The definition of {\transNovikovpoisson algebras} is motivated by their correspondence with transposed Poisson algebras under Novikov affinization (see Theorem~\ref{thmm2}).}
\noindent Both a Novikov algebra with the trivial commutative associative algebra and a commutative associative algebra with the trivial Novikov algebra are transposed Novikov-Poisson algebras.  We call such transposed Novikov-Poisson algebras {\bf trivial}. The essence of this definition is justified by the main result of this paper (Theorem~\ref{thmm2}), which establishes that the affinization $(A[t,t^{-1}],\cdot,[\cdot,\cdot])$ is a transposed Poisson algebra precisely when $(A,\cdot,\circ)$ is a transposed Novikov-Poisson algebra.

In fact, transposed Novikov-Poisson algebras have closer relationships with transposed Poisson algebras and have good properties. \delete{In Section 2, we first show the relationships of {\transNovikovpoisson algebras} and transposed Poisson algebras.} Besides the correspondence between transposed Novikov-Poisson algebras and transposed Poisson algebras under the affinization, we show that a transposed Poisson algebra  naturally
arises from a transposed Novikov-Poisson algebra by taking the commutator Lie algebra of the
Novikov algebra and can also be derived from a {\transNovikovpoisson algebra} via its derivations (See Theorem~\ref{thmm1}). We further show that there is a transposed Poisson algebra structure on the tensor product of a transposed Novikov-Poisson algebra and a right differential Novikov-Poisson algebra. Moreover, similar to Novikov-Poisson algebras, the tensor product of two transposed Novikov-Poisson algebras is also a transposed Novikov-Poisson algebra.  We also present several methods to construct transposed Novikov-Poisson algebras. A classification of 2-dimensional {\transNovikovpoisson algebras} over $\mathbb{C}$ is also presented and several algebraic identities of transposed Novikov-Poisson algebras are given.  We also show that there is no direct relationship between transposed Novikov-Poisson algebras and Novikov-Poisson algebras. Inspired by the work \cite{1-2}, we also present a relationship between \(\frac{1}{2}\)-derivations of the associated Novikov algebras and transposed Novikov-Poisson algebras. Moreover, a relationship between  transposed Novikov-Poisson algebras and Hom-Novikov algebras is given. By using \(\frac{1}{2}\)-derivations of Novikov algebras, we develop a technical construction of {\transNovikovpoisson algebras} on a solvable Novikov algebra $(A, \circ)$ with $\operatorname{Ann}_A(A)\neq0$ (see Corollary 3.15). \delete{The approach is inspired by the brilliant method of I. Kaygorodov, V. Lopatkin and Z. Zhang~\cite{1/2sol1}.} For the case $\operatorname{Ann}_A(A)=0$, the {\transNovikovpoisson} structure on $(A, \circ)$ may not exist. Hence we impose stronger conditions and propose an explicit  method to construct transposed Novikov-Poisson algebras on $(A, \circ)$ (see Theorem 3.20). Finally, we show that if $(A, \cdot, \circ)$ is a  non-trivial simple transposed Novikov-Poisson algebra, then $(A, \circ)$ is a simple Novikov algebra. Therefore, any simple transposed Novikov-Poisson algebra
over an algebraically closed field of characteristic 0 is of dimension 1.  Transposed Novikov-Poisson algebras on some simple Novikov algebras are also characterized.

\delete{generalize the affinization construction for transposed Poisson algebras (see Proposition~\ref{pro-constr-trp}) and investigate basic properties, identities, and examples of {\transNovikovpoisson algebras}. Among these properties, we observe that a natural {\transNovikovpoisson} structure arises on the tensor product of two such algebras?aan important feature similar to transposed Poisson algebras.
 We also present several methods for constructing new {\transNovikovpoisson algebras} from existing ones. The section includes a classification of 2-dimensional {\transNovikovpoisson algebras} over $\mathbb{C}$ and a natural construction from commutative associative algebras (see Proposition~\ref{centroids}). Finally, we note that the relationship between {\transNovikovpoisson algebras} and Novikov-Poisson algebras is independently.}

This paper is  organized as follows. In Section 2, we present the relationships between transposed Novikov-Poisson algebras and transposed Poisson algebras. Moreover, some basic properties, identities,  examples  and constructions of transposed Novikov-Poisson algebras are presented. In Section 3, a relation between \(\frac{1}{2}\)-derivations of the associated Novikov algebras and transposed Novikov-Poisson algebras is established. Applying this relation, the transposed Novikov-Poisson algebras structures on solvable Novikov algebras are investigated. Section 4 is devoted to studying simple transposed Novikov-Poisson algebras. W show that if $(A, \cdot, \circ)$ is a  non-trivial simple transposed Novikov-Poisson algebra,then $(A, \circ)$ is a simple Novikov algebra. Moreover, transposed Novikov-Poisson algebras on some simple Novikov algebras are also characterized.

\delete{In section 3, we introduce $\frac{1}{2}$-derivations of Novikov algebras. These derivations are structurally essential since left multiplication in a commutative associative algebra is a $\frac{1}{2}$-derivation of associated Novikov algebra. The relationship between \(\frac{1}{2}\)-derivations of associatived Novikov algebra and a {\transNovikovpoisson} algebra is similar to that established by B. Ferreira, I. Kaygorodov and V. Lopatkin~\cite{1-2} for Lie algebras and transposed Poisson algebras. Moreover, we develop a technical construction of {\transNovikovpoisson algebras} on a solvable Novikov algebra with $\operatorname{Ann}_A(A)\neq0$. The approach is inspired by the brilliant method of I. Kaygorodov, V. Lopatkin and Z. Zhang~\cite{1/2sol1}. For cases $\operatorname{Ann}_A(A)=0$, the {\transNovikovpoisson} structure may not exist, hence we impose stronger conditions and propose an explicit construction method.

\delete{Novikov-Poisson algebras~\cite{Xu1} were introduced by substituting the Lie bracket in a Poisson algebra with the product of a Novikov algebra. A {\bf Novikov-Poisson algebra} is a triple $(A, \cdot, \circ)$, where $(A,\cdot)$ is a commutative associative algebra, $(A,\circ)$ is a Novikov algebra, and they satisfy the following compatibility conditions:
\begin{eqnarray}
&\label{NP1} (x\cdot y) \circ z = x\cdot (y \circ z),\\
&\label{NP2}  (x \circ y)\cdot z - (y \circ x) \cdot z = x \circ (y\cdot z) - y \circ (x\cdot z),
\end{eqnarray}
for all $x$, $y$, $z\in A$. Researches on Novikov-Poisson algebras have expanded to include low-dimensional classifications~\cite{someresult}, bialgebra structures~\cite{NPbialgebra}, and relationships with quadratic and higher-dimensional Lie conformal algebras~\cite{NPconformal1,NPconformal2}.\par
\delete{In~\cite{Bai}, Bai, Bai, Guo, and Wu introduced a dual class of the Poisson algebras. The triple \( (L, \cdot, [\cdot,\cdot]) \) is called a {\bf transposed Poisson algebra} if \( (L, \cdot) \) is a commutative associative algebra, \( (L, [\cdot,\cdot]) \) is a Lie algebra and they satisfy the following compatibility condition
\begin{eqnarray}
&&\label{transposedpoissonlu}2z \cdot [x, y] = [z \cdot x, y] + [x, z \cdot y] \quad \text{for all}\: x, y, z \in L.
\end{eqnarray}
The Eq.~\eqref{transposedpoissonlu} is following by switching the roles played by the two binary operations in the Leibniz rule in a Poisson algebra and introducing a factor 2 on the left hand side. The authors show that transposed Poisson algebras are closely related to Noivkov-Poisson algebras, 3-Lie algebras and so on. The introduction of transposed Poisson algebras carries considerable significance.
\vspb}}

%\subsection{Novikov algebras and their affinization}\

%\subsection{Outline of paper}
%The paper is briefly outlined as following.\par
In Section 2, we first show the relationships of {\transNovikovpoisson algebras} and transposed Poisson algebras. Besides the correspondence under affinization, we show that a transposed Poisson algebra can also be derived from a {\transNovikovpoisson algebra} directly or via its derivations (See Theorem~\ref{thmm1}).
 We further generalize the affinization construction for transposed Poisson algebras (see Proposition~\ref{pro-constr-trp}) and investigate basic properties, identities, and examples of {\transNovikovpoisson algebras}. Among these properties, we observe that a natural {\transNovikovpoisson} structure arises on the tensor product of two such algebras?aan important feature similar to transposed Poisson algebras.
 We also present several methods for constructing new {\transNovikovpoisson algebras} from existing ones. The section includes a classification of 2-dimensional {\transNovikovpoisson algebras} over $\mathbb{C}$ and a natural construction from commutative associative algebras (see Proposition~\ref{centroids}). Finally, we note that the relationship between {\transNovikovpoisson algebras} and Novikov-Poisson algebras is independently.

In Scection 3, we introduce $\frac{1}{2}$-derivations of Novikov algebras. These derivations are structurally essential since left multiplication in a commutative associative algebra is a $\frac{1}{2}$-derivation of associated Novikov algebra. The relationship between \(\frac{1}{2}\)-derivations of associatived Novikov algebra and a {\transNovikovpoisson} algebra is similar to that established by B. Ferreira, I. Kaygorodov and V. Lopatkin~\cite{1-2} for Lie algebras and transposed Poisson algebras. Moreover, we develop a technical construction of {\transNovikovpoisson algebras} on a solvable Novikov algebra with $\operatorname{Ann}_A(A)\neq0$. The approach is inspired by the brilliant method of I. Kaygorodov, V. Lopatkin and Z. Zhang~\cite{1/2sol1}. For cases $\operatorname{Ann}_A(A)=0$, the {\transNovikovpoisson} structure may not exist, hence we impose stronger conditions and propose an explicit construction method.

In Section 4, the notion of a quasi-ideal for \transNovikovpoisson algebras is introduced. This concept is preferred because its definition aligns directly with the forms $(x\cdot y)\circ z$ or $z\circ (xy)$ found in Eqs.~\eqref{transNP1} and~\eqref{transNP2}. And we conclude that for a simple {\transNovikovpoisson algebra} has a simple associated Novikov algebra, this proof follows the ingenious idea of A. Fern\'andez Ouaridi ~\cite{simple-simple}. Furthermore, the classification of simple Novikov algebras has been extensively studied.\delete{ Osborn classified finite simple Novikov algebras with idempotent elements and their modules over an algebraically closed field of characteristic $p\geq3$~\cite{Ofinite}, Xu final give a complete classification~\cite{simplesushuyouxian}.} We consider the {\transNovikovpoisson algebras} whose Novikov algebraic structure is simple.\\}

\noindent{\bf Notations.} Throughout this paper, let  $\bf k$ be a field whose characteristic is not equal to $2$. All tensors over ${\bf k}$ are denoted by $\otimes$. Denote by $\mathbb{C}$, $\mathbb{Z}$, $\mathbb{N}$ and $\mathbb{Z}_+$ the sets of complex numbers, integer numbers, non-negative integers and positive integers respectively.
Let $A$ be a vector space with a binary operation $\ast$ and $x\in A$. Define linear maps
$L_\ast(x)$, $R_\ast(x)\in \text{End}_{\bf k}(A)$ as follows:
\begin{eqnarray*}
L_\ast(x)(y)=x\ast y,\;\;R_\ast(x)(y)=y\ast x\;\;\text{for all $y\in A$.}
\end{eqnarray*}

\section{Properties of \transNovikovpoisson algebras and the relationships with transposed Poisson algebras}
In this section, we present some examples, constructions, identities, basic properties of transposed Novikov-Poisson algebras. The relationships between transposed Novikov-Poisson algebras and transposed Poisson algebras are also given.
\subsection{Transposed Novikov-Poisson algebras and transposed Poisson algebras}\

\delete{Recall that a {\bf (left) Novikov algebra} $(A,\circ)$ is a vector space $A$ equipped with a binary operation $\circ:A \otimes A \to A$ satisfying the following compatibility conditions:
\begin{eqnarray}
\label{Novikov1}(x\circ y)\circ z&=&(x\circ z)\circ y,\\
\label{Novikov2}(x\circ y)\circ z-x\circ (y\circ z)&=&(y\circ x)\circ z-y\circ (x\circ z)\quad \text{for all}\: x,y,z\in A.
\end{eqnarray}
Let $B$ be a vector space with a binary operation $\diamond$. If $(B, \circ)$ is a Novikov algebra with $a\circ b:=b\diamond a$ for all $a$, $b\in B$, then $(B, \diamond)$ is called a {\bf right Novikov algebra}.
%Unless otherwise specified, all Novikov algebras mentioned in this paper are left Novikov algebras.
Next, we introduce the definition of transposed Novikov-Poisson algebras.}

\delete{\begin{defi}
The triple $(A,\cdot,\circ)$ is called a {\bf \transNovikovpoisson algebra} if $(A,\cdot)$ is a commutative and associative algebra, $(A,\circ)$ is a Novikov algebra, and they satisfy the following compatibility conditions
\begin{eqnarray}
&&\label{transNP1}\quad(x\cdot y)\circ z=(x\cdot z)\circ y,\\
&&\label{transNP2}2z\cdot(x\circ y)=(z\cdot x)\circ y+x\circ(z\cdot y)\quad \text{for all}\: x,y,z\in A.
\end{eqnarray}
\end{defi}

\begin{rmk}
It is easy to see that both a Novikov algebra with the trivial commutative associative algebra and a commutative associative algebra with the trivial Novikov algebra are transposed Novikov-Poisson algebras.  We call such transposed Novikov-Poisson algebras {\bf trivial}.
\end{rmk}}

\delete{Recall \cite{Bai} that the triple \( (L, \cdot, [\cdot,\cdot]) \) is called a {\bf transposed Poisson algebra} if \( (L, \cdot) \) is a commutative associative algebra, \( (L, [\cdot,\cdot]) \) is a Lie algebra and they satisfy the following compatibility condition
\begin{eqnarray}
&&\label{transposedpoissonlu}2z \cdot [x, y] = [z \cdot x, y] + [x, z \cdot y] \quad \text{for all}\: x, y, z \in L.
\end{eqnarray}}

There are close relationships between \transNovikovpoisson algebras and transposed Poisson algebras.
First, we present two direct constructions of transposed Poisson algebras from \transNovikovpoisson algebras
\begin{thm}\label{thmm1}
\label{lidaishu}
Let $(A,\cdot,\circ)$ be a \transNovikovpoisson algebra with a derivation $D$, i.e., $D$ is a derivation of both $(A,\cdot)$ and $(A,\circ)$. Then we obtain the following conclusions.
\begin{enumerate}
\item\label{thm1-1} The triple $(A,\cdot,[\cdot,\cdot])$  is a transposed Poisson algebra, where $[\cdot,\cdot]$ is given by
\begin{eqnarray*}
[x,y]=x\circ y-y\circ x\;\;\;\text{for all $x$, $y\in A$.}
\end{eqnarray*}
\item \label{thm1-2} The triple $(A,\cdot,[\cdot,\cdot])$  is a transposed Poisson algebra, where $[\cdot,\cdot]$ is given by
\begin{eqnarray}
&&\label{jiaohuanzidaozi}[x,y]=D(x)\circ y-D(y)\circ x\:\:\text{for all}\:x,y\in A.
\end{eqnarray}
\end{enumerate}
\end{thm}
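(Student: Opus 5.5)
The plan is to verify the axioms of a transposed Poisson algebra directly in both parts. In each case we check that $(A,[\cdot,\cdot])$ is a Lie algebra and then check the compatibility condition (\ref{transposedpoissonlu}). The commutative associative structure $(A,\cdot)$ is unchanged, so nothing needs to be verified for it.

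For (\ref{thm1-1}), the Lie property is standard. By (\ref{Novikov2}), $(A,\circ)$ is a pre-Lie (left-symmetric) algebra, and the commutator of a pre-Lie algebra is a Lie bracket. For the compatibility, I would write (\ref{transNP2}) twice, once as stated and once with $x$ and $y$ interchanged:
$$2z\cdot(x\circ y)=(z\cdot x)\circ y+x\circ(z\cdot y),\qquad 2z\cdot(y\circ x)=(z\cdot y)\circ x+y\circ(z\cdot x).$$
Subtracting the second from the first and regrouping the right-hand side gives exactly $[z\cdot x,y]+[x,z\cdot y]$. Note that (\ref{transNP1}) is not needed for this part.

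For (\ref{thm1-2}), antisymmetry of (\ref{jiaohuanzidaozi}) is clear, so the work is the Jacobi identity, which I expect to be the main computation. I would expand $[[x,y],z]$, using that $D$ is a derivation of $\circ$ to write $D(D(x)\circ y)=D^2(x)\circ y+D(x)\circ D(y)$. This gives
$$[[x,y],z]=(D^2x\circ y)\circ z-(D^2y\circ x)\circ z+(Dx\circ Dy-Dy\circ Dx)\circ z-Dz\circ(Dx\circ y)+Dz\circ(Dy\circ x).$$
I would then take the cyclic sum over $x,y,z$ and handle three groups of terms.
\begin{itemize}
\item The $D^2$-terms pair off after a cyclic shift as $(D^2x\circ y)\circ z-(D^2x\circ z)\circ y$, which vanishes by (\ref{Novikov1}).
\item For the middle group, I would apply (\ref{Novikov2}) with the pair $Dx,Dy$ to rewrite it as $Dx\circ(Dy\circ z)-Dy\circ(Dx\circ z)$.
\item The rewritten middle group cancels cyclically against the last two terms. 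For instance, $Dx\circ(Dy\circ z)$ cancels against the cyclic shift of $-Dz\circ(Dx\circ y)$.
\end{itemize}

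Finally, for the compatibility in (\ref{thm1-2}), I would expand the right-hand side using that $D$ is a derivation of $\cdot$:
$$[z\cdot x,y]+[x,z\cdot y]=(Dz\cdot x)\circ y+(z\cdot Dx)\circ y-Dy\circ(z\cdot x)+Dx\circ(z\cdot y)-(Dz\cdot y)\circ x-(z\cdot Dy)\circ x.$$
The two terms involving $Dz$ cancel by (\ref{transNP1}). The remaining four terms regroup as $\bigl[(z\cdot Dx)\circ y+Dx\circ(z\cdot y)\bigr]-\bigl[(z\cdot Dy)\circ x+Dy\circ(z\cdot x)\bigr]$. By (\ref{transNP2}), applied with $Dx$ in place of $x$ and then with $Dy$ in place of $x$, this equals $2z\cdot(Dx\circ y)-2z\cdot(Dy\circ x)=2z\cdot[x,y]$, as required.
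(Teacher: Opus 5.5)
Your proposal is correct and follows the paper's proof. Part (a) subtracts the two instances of Eq.~\eqref{transNP2}, and part (b) cancels the $D(z)$-terms by Eq.~\eqref{transNP1} and then applies Eq.~\eqref{transNP2} to $D(x)$ and $D(y)$; your explicit cyclic-sum check of the Jacobi identity, via Eqs.~\eqref{Novikov1}, \eqref{Novikov2} and $D$ being a derivation of $\circ$, correctly fills in the step the paper only calls ``direct to check''.
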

\begin{proof}
(\ref{thm1-1}) Since $(A,\circ)$ is a Novikov algebra,  $(A,[\cdot,\cdot])$ is a Lie algebra.
By Eq.~\eqref{transNP2}, for all $x,y,z\in A$, we have
\begin{eqnarray*}
2z\cdot[x,y]&=&2z\cdot(x\circ y-y\circ x),\\
&=&(z\cdot x)\circ y+x \circ (z\cdot y)-(z\cdot y)\circ x-y\circ (z\cdot x)\\
&=&[z\cdot x,y]+[x,z\cdot y].
\end{eqnarray*}
Therefore, $(A,\cdot,[\cdot,\cdot])$ is a transposed Poisson algebra.

(\ref{thm1-2})  It is direct to check that $(A,[\cdot,\cdot])$ is a Lie algebra by Eqs.~\eqref{Novikov1} and ~\eqref{Novikov2}.
By Eqs.~\eqref{transNP1} and ~\eqref{transNP2}, for all $x,y,z\in A$, we obtain
\begin{eqnarray*}
2z\cdot [x,y]&=&2z\cdot(D(x)\circ y -D(y)\circ x)\\
&=&(z\cdot D(x))\circ y+D(x)\circ (z\cdot y)-(z\cdot D(y))\circ x-D(y)\circ (z\cdot x),
\end{eqnarray*}
and
\begin{eqnarray*}
[z\cdot x,y]+[x,z\cdot y]&=&D(z\cdot x)\circ y-D(y)\circ (z\cdot x)+D(x)\circ (z\cdot y)-D(z\cdot y)\circ x\\
&=&(z\cdot D(x))\circ y+D(x)\circ (z\cdot y)-(z\cdot D(y))\circ x-D(y)\circ (z\cdot x)\\
&&\quad+(x\cdot D(z))\circ y-(D(z)\cdot y)\circ x\\
&=&(z\cdot D(x))\circ y+D(x)\circ (z\cdot y)-(z\cdot D(y))\circ x-D(y)\circ (z\cdot x).
\end{eqnarray*}
Therefore, $(A,\cdot,[\cdot,\cdot])$ is a transposed Poisson algebra.
\end{proof}

\begin{rmk}
By \cite[Theorem 3.2]{Bai}, there is a similar construction of transposed Poisson algebras from Novikov-Poisson algebras as in (\ref{thm1-1}) of Theorem \ref{thmm1}. However, the construction of transposed Poisson algebras as in (\ref{thm1-2}) of Theorem \ref{thmm1} doesnot hold in the case of Novikov-Poisson algebras.
\end{rmk}

Next, we show that there is an affinization construction of transposed Poisson algebras from transposed Novikov-Poisson algebras, when ${\bf k}$ is a field of characteristic $0$.

\begin{thm}\label{thmm2}
 Let $A$ be a vector space with two binary operations $\cdot$ and $\circ$, and ${\bf k}$ be a field of characteristic $0$. Define two binary operations $\cdot$ and $[\cdot,\cdot]$ on $A[t,t^{-1}]\coloneqq A\otimes \bfk[t,t^{-1}]$ by
\begin{eqnarray}
&&\label{luolang1}(xt^m)\cdot (yt^n)=(x\cdot y)t^{m+n},\\
&&\label{luolang2}[xt^m, yt^n]=m(x\circ y)t^{m+n-1}-n(y\circ x)t^{m+n-1},
\end{eqnarray}
for all $x,y\in A$ and $m, n\in\mathbb{Z}$, where $x t^m:=x\otimes t^{m}$. Then $(A[t,t^{-1}],\cdot,[\cdot,\cdot])$ is a transposed Poisson algebra if and only if $(A, \cdot,\circ)$ is a {\transNovikovpoisson algebra}.
\end{thm}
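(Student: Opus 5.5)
The proof splits the transposed Poisson axioms on $A[t,t^{-1}]$ into three parts and compares coefficients of monomials in $m,n,p$. Characteristic $0$ is what allows this: an identity of the form $\sum m^a n^b p^c\,(\text{element of }A)\,t^{m+n+p+s}=0$ holding for all integers $m,n,p$ forces each polynomial coefficient to vanish.

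\textbf{Step 1 (the algebra structures separately).} By Eq.~\eqref{luolang1}, $(A[t,t^{-1}],\cdot)$ is commutative associative if and only if $(A,\cdot)$ is; for the converse take $m=n=p=0$. By Theorem~\ref{thmm0}, $(A[t,t^{-1}],[\cdot,\cdot])$ is a Lie algebra if and only if $(A,\circ)$ is a Novikov algebra.

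\textbf{Step 2 (the compatibility condition).} For $x t^m, y t^n, z t^p$ we compute both sides of Eq.~\eqref{transposedpoissonlu}. The left side is
\[
2(zt^p)\cdot[xt^m,yt^n]=\big(2m\,z\cdot(x\circ y)-2n\,z\cdot(y\circ x)\big)t^{m+n+p-1}.
\]
The right side is
\begin{align*}
[(z\cdot x)t^{m+p},yt^n]+[xt^m,(z\cdot y)t^{n+p}]
=\big(&(m+p)(z\cdot x)\circ y-n\,y\circ(z\cdot x)\\
&+m\,x\circ(z\cdot y)-(n+p)(z\cdot y)\circ x\big)t^{m+n+p-1}.
\end{align*}
Since both sides are linear in $m,n,p$, the compatibility condition holds for all $m,n,p$ if and only if the coefficients of $m$, $n$ and $p$ agree separately:
\begin{align*}
m:&\quad 2z\cdot(x\circ y)=(z\cdot x)\circ y+x\circ(z\cdot y),\\
n:&\quad 2z\cdot(y\circ x)=y\circ(z\cdot x)+(z\cdot y)\circ x,\\
p:&\quad (z\cdot x)\circ y=(z\cdot y)\circ x.
\end{align*}
The $m$-equation is exactly Eq.~\eqref{transNP2}, and the $n$-equation is the same identity with $x$ and $y$ swapped. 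Using commutativity of $\cdot$, the $p$-equation is Eq.~\eqref{transNP1}.

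\textbf{Step 3 (conclusion).} Sufficiency holds because each coefficient identity follows from the axioms of a transposed Novikov-Poisson algebra. For necessity, specialize $(m,n,p)$ to $(1,0,0)$ and $(0,0,1)$ to extract Eqs.~\eqref{transNP2} and \eqref{transNP1}. The only delicate point is separating the coefficients, and characteristic $0$ together with these specializations handles it. The remaining care needed is in the bookkeeping: the bracket term $-n(y\circ x)$ gets multiplied by $z$ on the left side, and the right side must be rearranged correctly so that the $p$-terms do not cancel incorrectly.
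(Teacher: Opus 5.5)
Your proposal is correct and follows the paper's proof: you obtain the commutative associative and Lie parts from the affinization of $\cdot$ and Theorem~\ref{thmm0}, then compare the coefficients of $m$, $n$ and the exponent of $z$ in the compatibility identity to recover Eqs.~\eqref{transNP1} and \eqref{transNP2}. One small correction: the specializations $(m,n,p)=(1,0,0),(0,1,0),(0,0,1)$ separate the coefficients in any characteristic, so characteristic $0$ is needed only through Theorem~\ref{thmm0}, not for the coefficient comparison as you claim.
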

\begin{proof}
Obviously, $(A,\cdot)$ is a commutative associative algebra if and only if $(A[t,t^{-1}],\cdot)$ is a commutative associative algebra. By Theorem \ref{thmm0}, $(A,\circ)$ is a Novikov algebra if and only if $(A[t,t^{-1}],[\cdot,\cdot])$ is a Lie algebra.
By Eqs.~\eqref{luolang1} and ~\eqref{luolang2}, we obtain
\begin{eqnarray*}
2(zt^k)\cdot [x t^m, y t^n] &=& 2(zt^k)\cdot\big(m(x\circ y)t^{m + n - 1} - n(y\circ x)t^{m + n - 1}\big)\\
&=& 2m\big(z\cdot (x\circ y)\big)t^{k + m + n - 1} - 2n\big(z\cdot (y\circ x)\big)t^{k + m + n - 1},
\end{eqnarray*}
\begin{eqnarray*}
&&[(zt^k)\cdot (x t^m), y t^n] = [(z\cdot x) t^{m + k}, y t^n] = (m+k)\big((z\cdot x)\circ y\big)t^{m + k + n - 1} - n\big(y\circ (z\cdot x)\big)t^{m + k + n - 1},
\end{eqnarray*}
and
\begin{eqnarray*}
&&[x t^m, (zt^k)\cdot (y t^n)] = [x t^m, (z\cdot y) t^{k + n}] = m\big(x\circ (z\cdot y)\big)t^{m + k + n - 1} - (n+k)\big((z\cdot y)\circ x\big)t^{m + k + n - 1},
\end{eqnarray*}
for all $x,y,z\in A$ and $m,n,l\in\mathbb{Z}$. Comparing the coefficients of $m,n,k$, we obtain that  $(A[t,t^{-1}],\cdot,[\cdot,\cdot])$ satisfies Eq.~\eqref{transposedpoissonlu} if and only if $(A,\cdot,\circ)$ satisfies Eqs.~\eqref{transNP1} and ~\eqref{transNP2}. Then this conclusion holds.
\end{proof}

\begin{ex}\label{ex-dim-1}
Let $(A={\bf k}e, \cdot, \circ)$ be the transposed Novikov-Poisson algebra defined by
\begin{eqnarray}
e\cdot e=\alpha e,\;\;e\circ e=e,
\end{eqnarray}
where $\alpha\in {\bf k}$. Then by Theorem \ref{thmm2}, there is a transposed Poisson algebra structure on $A[t,t^{-1}]$ defined by
\begin{eqnarray}
et^m\cdot et^n=\alpha et^{m+n},\;\;[et^m, et^n]=(m-n)et^{m+n-1}.
\end{eqnarray}
\end{ex}

Finally, we present a generalization of the affinization construction of  transposed Poisson algebras.
\delete{\begin{defi}
 A {\bf right  Novikov algebra} is a vector space $A$ with a binary
 operation $\diamond $ satisfying
    \begin{eqnarray}
 &&\label{rightNovikov1}(a\diamond b)\diamond c-a\diamond (b\diamond c)=(a\diamond c)\diamond b-a\diamond (c\diamond b),\\
&&\label{rightNovikov2}
 a\diamond (b\diamond c)=b\diamond (a\diamond c) ~~~~\tforall  a, b,c\in A.
    \end{eqnarray}
\end{defi}}

\begin{defi}
 A {\bf right differential Novikov-Poisson algebra} is a triple $(B,\cdot,\diamond)$ where $(B,\cdot)$ is a commutative associative algebra, $(B,\diamond)$ is a right Novikov algebra and they satisfy
\begin{eqnarray}
&&\label{rightdiffNovikov1}
(a\diamond b )\cdot c=a\diamond(b\cdot c),\\
&&\label{rightdiffNovikov2} (a\cdot b)\diamond c=(a\diamond c)\cdot b+a\cdot(b\diamond c)\;\;\;\text{for all $a$, $b$, $c\in B$.}
\end{eqnarray}
\end{defi}
\begin{rmk}
Let $A$ be a vector space with two binary operations $\cdot$ and $\circ$. If $(A, \cdot, \diamond)$ is a right differential Novikov-Poisson algebra with $a\diamond b:=b\circ a$ for all $a$, $b\in A$, then $(A, \cdot, \circ)$ \delete{is called a {\bf differential Novikov-Poisson algebra}, which was introduced in \cite{BCZ}.}is a differential Novikov-Poisson algebra \cite{BCZ}.

Let $(B,\cdot,\diamond)$ be a right differential Novikov-Poisson algebra. By Eqs.~\eqref{rightdiffNovikov1} and~\eqref{rightdiffNovikov2}, for all $a$, $b$, $c\in B$, we have
\begin{eqnarray*}
&&(a\cdot b)\diamond c-(a\cdot c)\diamond b=(a\diamond c)\cdot b+a\cdot(b\diamond c)-(a\diamond b)\cdot c-a\cdot (c\diamond b)=a\cdot (b\diamond  c)-a\cdot (c\diamond b).
\end{eqnarray*}
Therefore, we have
\begin{eqnarray}
&&\label{difflem}(a\cdot b)\diamond c-(a\cdot c)\diamond b=a\cdot (b\diamond c)-a\cdot (c\diamond b).
\end{eqnarray}
\end{rmk}

There is a natural construction of right differential Novikov-Poisson algebras from commutative associative algebras with a derivation.

\begin{pro}\label{constr-rdNP}
Let $(B, \cdot)$ be a commutative associative algebra with a derivation $D$. Define a binary operation $\diamond$ on $B$ as follows:
\begin{eqnarray}
a\diamond b:=D(a)\cdot b \;\;\;\text{for all $a$, $b\in B$.}
\end{eqnarray}
Then $(B, \cdot,\diamond)$ is a right differential Novikov-Poisson algebra.
\end{pro}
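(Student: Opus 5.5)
The plan is a direct verification of the four defining identities, using only that $(B,\cdot)$ is commutative and associative and that $D$ satisfies the Leibniz rule $D(a\cdot b)=D(a)\cdot b+a\cdot D(b)$. With $a\diamond b=D(a)\cdot b$, every expression reduces to a product in the commutative associative algebra $B$. We may therefore drop parentheses and reorder factors freely.

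First we check that $(B,\diamond)$ is a right Novikov algebra. Equivalently, $a\circ b:=b\diamond a=D(b)\cdot a$ must satisfy Eqs.~\eqref{Novikov1} and \eqref{Novikov2}; we prove the right-hand versions directly.
\begin{itemize}
\item \emph{Left commutativity:} $a\diamond(b\diamond c)=D(a)\cdot D(b)\cdot c$ is symmetric in $a,b$, hence equals $b\diamond(a\diamond c)$. This is the translation of Eq.~\eqref{Novikov1}.
\item \emph{Right-symmetry of the associator:} $(a\diamond b)\diamond c=D(D(a)\cdot b)\cdot c=D^2(a)\cdot b\cdot c+D(a)\cdot D(b)\cdot c$. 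Subtracting $a\diamond(b\diamond c)=D(a)\cdot D(b)\cdot c$ leaves $D^2(a)\cdot b\cdot c$, which is symmetric in $b,c$. This is the translation of Eq.~\eqref{Novikov2}.
\end{itemize}
The only point needing care is matching the index conventions: the right Novikov axioms are the left ones with the order of arguments reversed. Once the identities are written out for $\circ$, the check is immediate.

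Next come the two compatibility conditions.
\begin{itemize}
\item \emph{Eq.~\eqref{rightdiffNovikov1}:} $(a\diamond b)\cdot c=D(a)\cdot b\cdot c=a\diamond(b\cdot c)$, by associativity.
\item \emph{Eq.~\eqref{rightdiffNovikov2}:} $(a\cdot b)\diamond c=D(a\cdot b)\cdot c=D(a)\cdot b\cdot c+a\cdot D(b)\cdot c$. The first term is $(a\diamond c)\cdot b=D(a)\cdot c\cdot b$ and the second is $a\cdot(b\diamond c)=a\cdot D(b)\cdot c$, using commutativity.
\end{itemize}

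No real obstacle is expected; the derivation property is used exactly twice, in the associator computation and in Eq.~\eqref{rightdiffNovikov2}.
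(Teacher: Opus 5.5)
Your proposal is correct and takes the same approach as the paper, whose proof just says the verification is straightforward. Your explicit checks supply exactly that verification: the translated Novikov axioms (left commutativity of $\diamond$, and the associator reducing to $D^2(a)\cdot b\cdot c$), plus the two compatibility identities, one by associativity and one by the Leibniz rule.
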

\begin{proof}
It is straightforward.
\end{proof}
\begin{ex}\label{ex1}
Let $({\bf k}[t,t^{-1}],\cdot)$ be the Laurent polynomial algebra and $D=\frac{d}{dt}$. Then by Proposition \ref{constr-rdNP}, there is a right differential Novikov-Poisson algebra $({\bf k}[t,t^{-1}],\cdot,\diamond)$, where $\diamond$ is defined by
\begin{eqnarray}
t^m\diamond t^n=mt^{m+n-1}\;\;\;\text{for all $m$, $n\in \mathbb{Z}$.}
\end{eqnarray}
\end{ex}
\delete{\begin{lem}
Let $(A,\cdot,\circ)$ be a right differential Novikov-Poisson algebra. Then the following identity holds for all $x,y\in A$
\begin{eqnarray}
&&\label{difflem}(x\cdot y)\circ z-(x\cdot z)\circ y=x\cdot (y\circ z)-x\cdot (z\circ y).
\vspd
\vspa
\end{eqnarray}
\end{lem}

\begin{proof}
By Eqs.~\eqref{rightdiffNovikov1} and~\eqref{rightdiffNovikov2}, we have \vspa
\begin{eqnarray*}
&&(x\cdot y)\circ z-(x\cdot z)\circ y=(x\circ z)\cdot y+x\cdot(y\circ z)-(x\circ y)\cdot z-x\cdot (z\circ y)=x\cdot (y\circ z)-x\cdot (z\circ y).
\vspb
\vspa
\end{eqnarray*}
\quad This completes the proof.
\vspa
\end{proof}}

\begin{pro}\label{pro-constr-trp}
Let $(A, \cdot_1, \circ)$ be a {\transNovikovpoisson algebra} and $(B, \cdot_2, \diamond)$ be a right differential Novikov-Poisson algebra. Define two binary operations $\cdot$, $[\cdot,\cdot]$ on $A\otimes B$ by
\begin{eqnarray}
&&\label{afftensor1}(x\otimes a) \cdot (y \otimes b) = x \cdot_1 y \otimes a \cdot_2 b,\\
&&\label{afftensor2}[x \otimes a, y \otimes b] = x\circ y\otimes a\diamond b-y\circ x \otimes b\diamond a,
\end{eqnarray}
for all $x,y\in A$ and $a,b\in B$. Then $(A\otimes B,\cdot,[\cdot,\cdot])$ is a transposed Poisson algebra.
\end{pro}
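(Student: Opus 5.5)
The plan is to verify the three defining properties of a transposed Poisson algebra for $(A\otimes B,\cdot,[\cdot,\cdot])$ one at a time. We check on pure tensors and extend bilinearly.

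\emph{Step 1 (commutative associative product).} Since $(A,\cdot_1)$ and $(B,\cdot_2)$ are commutative associative, so is their tensor product algebra with product (\ref{afftensor1}). This is immediate.

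\emph{Step 2 (Lie algebra).} Skew-symmetry of (\ref{afftensor2}) is clear by swapping $x\otimes a$ and $y\otimes b$. For the Jacobi identity, I would expand
\[
[[x\otimes a,y\otimes b],z\otimes c]+\text{cyclic}
\]
into twelve terms of the forms
\[
(x\circ y)\circ z\otimes (a\diamond b)\diamond c
\qquad\text{and}\qquad
z\circ(x\circ y)\otimes c\diamond(a\diamond b).
\]
These are then grouped so that the Novikov identities (\ref{Novikov1}), (\ref{Novikov2}) on the $A$-side pair with the right Novikov identities on the $B$-side. The right Novikov identities are $a\diamond(b\diamond c)=b\diamond(a\diamond c)$ and the right-symmetric associator identity, which are the mirror images of (\ref{Novikov2}) and (\ref{Novikov1}). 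This is the standard fact that the tensor product of a Novikov algebra and a right Novikov algebra, with the commutator-type bracket above, is a Lie algebra; it reflects the Koszul duality of the two operads. Theorem \ref{thmm0} is the special case $B={\bf k}[t,t^{-1}]$ of Example \ref{ex1}. Here no compatibility with $\cdot$ is needed. I expect this bookkeeping to be the main (though routine) obstacle, and I would organize it by the position of the letters $a,b,c$ on the $B$-side.

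\emph{Step 3 (compatibility (\ref{transposedpoissonlu})).} Fix $x\otimes a$, $y\otimes b$, $z\otimes c$ and expand both $[(z\otimes c)\cdot(x\otimes a),y\otimes b]$ and $[x\otimes a,(z\otimes c)\cdot(y\otimes b)]$. On the $B$-side I rewrite the products that appear:
\begin{itemize}
\item by (\ref{rightdiffNovikov2}), $(c\cdot_2 a)\diamond b=(c\diamond b)\cdot_2 a+c\cdot_2(a\diamond b)$, and similarly $(c\cdot_2 b)\diamond a=(c\diamond a)\cdot_2 b+c\cdot_2(b\diamond a)$;
\item by (\ref{rightdiffNovikov1}) and commutativity, $b\diamond(c\cdot_2 a)=c\cdot_2(b\diamond a)$ and $a\diamond(c\cdot_2 b)=c\cdot_2(a\diamond b)$.
\end{itemize}
Collecting the terms tensored with $c\cdot_2(a\diamond b)$ gives $(z\cdot_1 x)\circ y+x\circ(z\cdot_1 y)$, which equals $2z\cdot_1(x\circ y)$ by (\ref{transNP2}). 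Collecting the terms tensored with $c\cdot_2(b\diamond a)$ gives $-(y\circ(z\cdot_1 x)+(z\cdot_1 y)\circ x)=-2z\cdot_1(y\circ x)$, again by (\ref{transNP2}). Together these two groups reproduce exactly $2(z\otimes c)\cdot[x\otimes a,y\otimes b]$.

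The leftover is
\[
(z\cdot_1 x)\circ y\otimes (c\diamond b)\cdot_2 a-(z\cdot_1 y)\circ x\otimes (c\diamond a)\cdot_2 b .
\]
By (\ref{transNP1}), $(z\cdot_1x)\circ y=(z\cdot_1 y)\circ x$. By (\ref{rightdiffNovikov1}) and commutativity,
\[
(c\diamond b)\cdot_2 a=c\diamond(b\cdot_2 a)=(c\diamond a)\cdot_2 b .
\]
So the leftover vanishes and (\ref{transposedpoissonlu}) holds, which completes the proof.
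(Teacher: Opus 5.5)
Your proposal is correct and follows essentially the same route as the paper: the paper also treats the commutative associative part as obvious, gets the Lie structure on $A\otimes B$ from the known Novikov/right-Novikov tensor fact (citing \cite[Theorem 2.0]{HBG}) without expanding Jacobi, and verifies \eqref{transposedpoissonlu} using \eqref{transNP1}, \eqref{transNP2} and \eqref{difflem}, where \eqref{difflem} is just the combination of \eqref{rightdiffNovikov1} and \eqref{rightdiffNovikov2} that you apply directly. One cosmetic slip: under $a\circ' b:=b\diamond a$, the identity $a\diamond(b\diamond c)=b\diamond(a\diamond c)$ mirrors \eqref{Novikov1} and right-symmetry mirrors \eqref{Novikov2}, not the other way round.
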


\begin{proof}
Obviously, $(A\otimes B,\cdot)$ is a commutative associative algebra. By \cite[Theorem 2.0]{HBG}, $(A\otimes B, [\cdot,\cdot])$ is a Lie algebra.
Next, we need to check Eq.~\eqref{transposedpoissonlu}.  By Eqs.~\eqref{transNP1}, \eqref{transNP2} and \eqref{difflem}, we have
\begin{eqnarray*}
&&2(z \otimes c)\cdot [x \otimes a, y \otimes b]-[(z \otimes c)\cdot(x \otimes a), y \otimes b]-[x \otimes a,(z \otimes c)\cdot(y \otimes b)]\\
&&= 2\big(z\cdot_1(x\circ y)\big) \otimes \big(c\cdot_2(a\diamond b)\big)-2\big(z\cdot_1(y\circ x) \big)\otimes \big(c\cdot_2( b\diamond a)\big)-\big( (x\cdot_1z)\circ y\big)\otimes\big((a\cdot_2c)\diamond b\big)\\
&&+\big( y\circ(x\cdot_1 z)\big)\otimes\big(b\diamond(a\cdot_2 c)\big)-\big( x\circ(y\cdot_1 z)\big)\otimes\big(a\diamond (b\cdot_2 c)\big)+\big( (y\cdot_1 z)\circ x\big)\otimes\big((b\cdot_2 c)\diamond a\big)\\
&&=\big(((x\cdot_1 z)\circ y\big)\otimes \big(c\cdot_2(a\diamond b)\big)-\big((x\cdot_1z)\circ y \big)\otimes \big(c\cdot_2( b\diamond a)\big)\\
&&-\big( (x\cdot_1 z)\circ y\big)\otimes\big((c\cdot_2 a)\diamond b\big)+\big( (x\cdot_1 z)\circ y\big)\otimes\big((c\cdot_2 b)\diamond a\big)\\
&&=0.
\end{eqnarray*}
Then $(A\otimes B,\cdot,[\cdot,\cdot])$ is a transposed Poisson algebra.
\end{proof}

\begin{rmk}
Let $(A, \cdot_1, \circ)$ be a {\transNovikovpoisson algebra} and $(B, \cdot_2, \diamond)$ be the right differential Novikov-Poisson algebra given in Example \ref{ex1}. Then the transposed Poisson algebra $(A\otimes B,\cdot,[\cdot,\cdot])$ constructed in Proposition \ref{pro-constr-trp} is just $(A[t,t^{-1}],\cdot,[\cdot,\cdot])$ given in Theorem \ref{thmm2}.
\end{rmk}

\subsection{Properties of transposed Novikov-Poisson algebras }
\subsubsection{A construction of transposed Novikov-Poisson algebras from commutative associative algebras}
\begin{defi}~\cite{Centroids}
Let $(A, \ast)$ be a commutative associative algebra or a Novikov algebra. The {\bf centroid} of \( (A, \ast) \) is the set of linear maps \( \phi : A \to A \) such that \( \phi(x \ast y) = x \ast \phi(y)=\phi(x)\ast y \) for all \( x, y \in A \). \delete{An algebra is called {\bf central} if for every \( \phi \in \Gamma(A) \) we have that \( \phi(x) = \alpha x \) for some \( \alpha \in \mathbb{F} \) and any \( x \in A \). \par} We denote it by \( \Gamma((A, \ast)) \) .
\end{defi}
\begin{rmk}\label{rmk-centroid}
Let $(A, \cdot)$ be a commutative associative algebra and $p\in A$.
Then $L_\cdot(p)\in \Gamma((A,\cdot))$. Let $\alpha\in {\bf k}$. Define $\widetilde{\alpha}: A\rightarrow A$ by
\begin{eqnarray*}
\widetilde{\alpha}(x)=\alpha x\;\;\; \text{for all $x\in A$.}
\end{eqnarray*}
Then $\widetilde{\alpha}\in \Gamma((A,\cdot))$.

\delete{Then we have
\begin{eqnarray}\label{centroid}
\Gamma((A,\cdot)) = \{ \phi \in \mathrm{End}(A) \mid \phi(x\cdot y) = \phi(x)\cdot y = x\cdot \phi(y) \;\;\text{for all $x, y \in A $.} \}
\end{eqnarray}}
\end{rmk}

\begin{pro}\label{centroids}
Let \((A, \cdot)\) be a commutative associative algebra and $ \phi \in \Gamma((A, \cdot))$. Define a binary operation $\circ$ on $A$ as follows:
\begin{eqnarray}
&&\label{xDy}x \circ y := x \cdot \phi(y) \quad \text{for all } x, y \in A.
\end{eqnarray}
Then \((A, \cdot, \circ)\) is a transposed Novikov-Poisson algebra.
\end{pro}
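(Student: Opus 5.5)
The plan is a direct verification. Since $(A,\cdot)$ is commutative associative by hypothesis, it remains to check that $(A,\circ)$ is a Novikov algebra, i.e. Eqs.~\eqref{Novikov1} and \eqref{Novikov2}, and then the two compatibility conditions \eqref{transNP1} and \eqref{transNP2}. The only tools are commutativity and associativity of $\cdot$, together with the centroid property $\phi(x\cdot y)=\phi(x)\cdot y=x\cdot\phi(y)$. From these I expect the useful consequence $\phi(x)\cdot\phi(y)=\phi(\phi(x)\cdot y)=\phi^2(x\cdot y)$, which is symmetric in $x,y$.

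For \eqref{Novikov1}, compute $(x\circ y)\circ z=x\cdot\phi(y)\cdot\phi(z)$. This is symmetric in $y,z$ by commutativity, so it equals $(x\circ z)\circ y$. For \eqref{Novikov2}, compute $x\circ(y\circ z)=x\cdot\phi(y\cdot\phi(z))=x\cdot y\cdot\phi^2(z)$, using the centroid property. This expression is symmetric in $x,y$. Also $(x\circ y)\circ z=x\cdot\phi(y)\cdot\phi(z)=\phi^2(x\cdot y)\cdot z$ (equivalently $\phi(x)\cdot\phi(y)\cdot z$), which is likewise symmetric in $x,y$. So both sides of the associator identity \eqref{Novikov2} are symmetric and it holds. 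In fact the associator is symmetric in all three variables here.

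For \eqref{transNP1}, write $(x\cdot y)\circ z=x\cdot y\cdot\phi(z)=\phi(x\cdot y\cdot z)$. This is symmetric in $y,z$, which gives the identity. For \eqref{transNP2}, the left side is $2z\cdot x\cdot\phi(y)$. On the right, $(z\cdot x)\circ y=z\cdot x\cdot\phi(y)$, and $x\circ(z\cdot y)=x\cdot\phi(z\cdot y)=x\cdot z\cdot\phi(y)$. Their sum matches the left side.

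There is no real obstacle. The only step needing care is \eqref{Novikov2}, where one must consistently use $\phi(y\cdot\phi(z))=y\cdot\phi^2(z)$ and rewrite every term as $\phi$ or $\phi^2$ applied to a product. Doing so exposes the needed symmetry.
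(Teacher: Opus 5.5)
Your proposal is correct and is essentially the paper's own proof: a direct verification using only commutativity and associativity of $\cdot$ and the centroid identities $\phi(x\cdot y)=x\cdot\phi(y)=\phi(x)\cdot y$. The only difference is that the paper kills the left side of \eqref{Novikov2} by first noting $x\circ y=y\circ x$, while you show the symmetry in $x,y$ directly; in fact both $(x\circ y)\circ z$ and $x\circ(y\circ z)$ equal $\phi^2(x\cdot y\cdot z)$, so $\circ$ is even associative here.
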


\begin{proof}
Since \( \phi  \in  \Gamma((A, \cdot)) \), we have
\begin{eqnarray*}
x \circ y = x \cdot \phi(y) = \phi(x \cdot y) = y \cdot \phi(x).
\end{eqnarray*}
Then for all $x,y,z\in A$, we obtain
\begin{eqnarray*}
&&(x \circ y) \circ z = (x \cdot \phi(y)) \cdot \phi(z) = (x \cdot \phi(z)) \cdot \phi(y) = (x \circ z) \circ y,\\
&&x \circ (y \circ z) = x \circ (y \cdot \phi(z)) = x \cdot \phi(y \cdot \phi(z)) = x \cdot \phi(y) \cdot \phi(z),\\
&&(x \circ y) \circ z = x \cdot \phi(y) \cdot \phi(z),
\end{eqnarray*}
and
\begin{eqnarray*}
\quad y \circ (x \circ z) = y \cdot \phi(x) \cdot \phi(z).
\end{eqnarray*}
Thus, we obtain
\begin{eqnarray*}
&&(x \circ y) \circ z - (y \circ x) \circ z = 0 = x \circ (y \circ z) - y \circ (x \circ z),\\
&&2z \cdot (x \circ y) = 2z \cdot x \cdot \phi(y) = z \cdot x \cdot \phi(y) + x \cdot \phi(z \cdot y) = (z \cdot x) \circ y + x \circ (z \cdot y),\end{eqnarray*}
and
\begin{eqnarray*}
&&(x \cdot y) \circ z = x \cdot y \cdot \phi(z) = x \cdot z \cdot \phi(y) = (x \cdot z) \circ y.
\end{eqnarray*}
Therefore, $(A,\cdot,\circ)$ is a {\transNovikovpoisson algebra}.
\end{proof}

\begin{cor}
Let \((A, \cdot)\) be a commutative associative algebra, $p\in A$ or $p\in {\bf k}$.
Define a binary operation $\circ$ on $A$ as follows:
\begin{eqnarray}
&&\label{xDy}x \circ y := p\cdot x \cdot y \quad \text{for all } x, y \in A.
\end{eqnarray}
Then \((A, \cdot, \circ)\) is a transposed Novikov-Poisson algebra.
\delete{\item Define a binary operation $\circ$ on $A$ as follows:
\begin{eqnarray}
&&\label{xDy}x \circ y := \alpha  x \cdot y\quad \text{for all } x, y \in A.
\end{eqnarray}
Then \((A, \cdot, \circ)\) is a transposed Novikov-Poisson algebra.
\end{eqnarray}}
\end{cor}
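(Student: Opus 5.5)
This is a direct specialization of Proposition~\ref{centroids}: we only need a suitable centroid element $\phi$ with $x\cdot\phi(y)=p\cdot x\cdot y$.

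\emph{Case $p\in A$.} Take $\phi=L_\cdot(p)$. By Remark~\ref{rmk-centroid}, $\phi\in\Gamma((A,\cdot))$. Then $x\cdot\phi(y)=x\cdot(p\cdot y)=p\cdot x\cdot y$, using commutativity and associativity of $\cdot$. So the operation $x\circ y=p\cdot x\cdot y$ is exactly the one in Eq.~\eqref{xDy} of Proposition~\ref{centroids}.

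\emph{Case $p\in{\bf k}$.} Interpret $p\cdot x\cdot y$ as the scalar multiple $p\,(x\cdot y)$. Take $\phi=\widetilde{p}$, which again lies in $\Gamma((A,\cdot))$ by Remark~\ref{rmk-centroid}. Then $x\cdot\widetilde{p}(y)=p\,(x\cdot y)$, which matches the given $\circ$.

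In both cases Proposition~\ref{centroids} yields that $(A,\cdot,\circ)$ is a transposed Novikov-Poisson algebra.

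There is no real obstacle. The only point needing care is reading $p\cdot x\cdot y$ as a scalar multiple when $p\in{\bf k}$, since $A$ need not be unital.
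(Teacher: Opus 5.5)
Your proposal is correct and is exactly the paper's argument: the paper's proof also just applies Proposition~\ref{centroids} with the centroid elements $L_\cdot(p)$ (for $p\in A$) and $\widetilde{p}$ (for $p\in{\bf k}$) supplied by Remark~\ref{rmk-centroid}. You spell out the check $x\cdot\phi(y)=p\cdot x\cdot y$ in each case, which the paper leaves implicit.
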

\begin{proof}
It follows directly by Proposition \ref{centroids} and Remark \ref{rmk-centroid}.
\end{proof}

Finally, we present an example of transposed Novikov-Poisson algebras.
\begin{ex}
Let $A$ be a 3-dimensional vector space over $\mathbb{C}$ with a basis $\{e_1, e_2,e_3\}$. Then $(A, \cdot)$ is a commutative associative algebra with the non-zero products given by
\begin{equation*}
e_1 \cdot e_3 =e_3\cdot e_1= e_1,\quad e_2 \cdot e_3=e_3\cdot e_2 = e_2,\quad e_3\cdot e_3=e_3.
\end{equation*}
By~\cite{Centroids}, the centroid of $(A,\cdot)$ consists of linear maps $\phi$ defined by
\begin{eqnarray*}
&&\phi(e_1)=a_{11}e_1,\quad \phi(e_{2})=a_{11}e_2,\quad \phi(e_3)=a_{13}e_1+a_{23}e_2+a_{11}e_3,
\end{eqnarray*}
where $a_{11}$, $a_{23}$ and $a_{13}\in \mathbb{C}$.
By Proposition~\ref{centroids}, we obtain a Novikov algebra $(A,\circ)$ with the non-zero products given by
\begin{eqnarray*}
e_1\circ e_3=e_3\circ e_1=a_{11}e_1,\quad e_2\circ e_3=e_3\circ e_2=a_{11}e_2,\quad e_3\circ e_3=a_{13}e_1+a_{23}e_2+a_{11}e_3.
\end{eqnarray*}
Then $(A,\cdot,\circ)$ is a {\transNovikovpoisson algebra}.
\end{ex}

\subsubsection{Independence of Novikov-Poisson algebras and  transposed Novikov-Poisson algebras}
\begin{defi}\cite{Xu1}
A {\bf Novikov-Poisson algebra} is a triple $(A, \cdot, \circ)$, where $(A,\cdot)$ is a commutative associative algebra, $(A,\circ)$ is a Novikov algebra,
and they satisfy the following compatibility conditions
\begin{eqnarray}
&&\label{NP1} (x\cdot y) \circ z = x\cdot (y \circ z),\\
&&\label{NP2}  (x \circ y)\cdot z - (y \circ x) \cdot z = x \circ (y\cdot z) - y \circ (x\cdot z)\;\;\;\text{for all $x$, $y$, $z\in A$.}
\end{eqnarray}
\end{defi}

We show that the compatibility relations of the Novikov-Poisson algebra and those of the
transposed Novikov-Poisson algebra are independent in the following sense.
%The following Proposition illustrates the relationship between the {\transNovikovpoisson algebra} and the Novikov algebra.\vspa
\begin{pro}\label{guanxi}
Let \((A, \cdot, \circ)\) be a {\transNovikovpoisson algebra}. Then the following statements are equivalent.\vspa
\begin{enumerate}
\item \label{trans-NP1} The algebra \((A, \cdot, \circ)\)  is a Novikov-Poisson algebra.
\item \label{trans-NP2}The identity \((x\cdot y)\circ z = z\circ (x\cdot y)\) holds for all \(x, y, z \in A\).
\item \label{trans-NP3}\(L_\cdot(x) \in \Gamma((A,\circ))\) for all $ x\in A$\delete{, where \(L_x : A \to A\) is the linear operator of left multiplication given by \(L_x(y) = x \cdot y\)}.
\item \label{trans-NP4}\(L_\circ(x) \in \Gamma((A,\cdot))\)  for all $x \in A$.
\end{enumerate}
\end{pro}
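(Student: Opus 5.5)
The plan is to use (\ref{trans-NP3}) as the hub and prove (\ref{trans-NP1})$\Leftrightarrow$(\ref{trans-NP3}), (\ref{trans-NP2})$\Leftrightarrow$(\ref{trans-NP3}) and (\ref{trans-NP3})$\Leftrightarrow$(\ref{trans-NP4}). Each implication is a short manipulation of Eqs.~\eqref{transNP1} and \eqref{transNP2}, together with commutativity of $\cdot$ and the division by $2$ that $\mathrm{char}\,{\bf k}\neq 2$ allows.

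First I unwind the centroid conditions. (\ref{trans-NP3}) says
\[
z\cdot(x\circ y)=(z\cdot x)\circ y=x\circ(z\cdot y)\quad\text{for all } x,y,z\in A,
\]
and (\ref{trans-NP4}) says
\[
x\circ(y\cdot z)=y\cdot(x\circ z)=(x\circ y)\cdot z\quad\text{for all } x,y,z\in A.
\]

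For (\ref{trans-NP1})$\Rightarrow$(\ref{trans-NP3}), Eq.~\eqref{NP1} gives $(z\cdot x)\circ y=z\cdot(x\circ y)$. Substituting this into Eq.~\eqref{transNP2} gives $2z\cdot(x\circ y)=z\cdot(x\circ y)+x\circ(z\cdot y)$, hence $z\cdot(x\circ y)=x\circ(z\cdot y)$, which is (\ref{trans-NP3}).

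For (\ref{trans-NP3})$\Rightarrow$(\ref{trans-NP1}), Eq.~\eqref{NP1} is the first equality of (\ref{trans-NP3}). For Eq.~\eqref{NP2}, I rewrite $(x\circ y)\cdot z=x\circ(y\cdot z)$ and $(y\circ x)\cdot z=y\circ(x\cdot z)$, both by (\ref{trans-NP3}) and commutativity, and the identity follows.

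For (\ref{trans-NP2})$\Rightarrow$(\ref{trans-NP3}), I use (\ref{trans-NP2}) and then Eq.~\eqref{transNP1} to get $x\circ(z\cdot y)=(z\cdot y)\circ x=(z\cdot x)\circ y$. Then Eq.~\eqref{transNP2} becomes $2z\cdot(x\circ y)=2(z\cdot x)\circ y$, so $z\cdot(x\circ y)=(z\cdot x)\circ y=x\circ(z\cdot y)$.

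For (\ref{trans-NP3})$\Rightarrow$(\ref{trans-NP2}), I chain (\ref{trans-NP3}), Eq.~\eqref{transNP1} and (\ref{trans-NP3}) again:
\[
(x\cdot y)\circ z=(x\cdot z)\circ y=x\cdot(z\circ y)=z\circ(x\cdot y).
\]

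For (\ref{trans-NP3})$\Rightarrow$(\ref{trans-NP4}), the two required equalities $y\cdot(x\circ z)=x\circ(y\cdot z)$ and $(x\circ y)\cdot z=x\circ(z\cdot y)$ are read off directly from (\ref{trans-NP3}).

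The one step needing a small trick is (\ref{trans-NP4})$\Rightarrow$(\ref{trans-NP3}), because (\ref{trans-NP4}) says nothing directly about $(y\cdot x)\circ z$. I recover that term from Eq.~\eqref{transNP2}:
\[
2y\cdot(x\circ z)=(y\cdot x)\circ z+x\circ(y\cdot z)=(y\cdot x)\circ z+y\cdot(x\circ z),
\]
so $(y\cdot x)\circ z=y\cdot(x\circ z)=x\circ(y\cdot z)$, which is (\ref{trans-NP3}).

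I expect no genuine obstacle; the only care needed is to apply Eq.~\eqref{transNP1} with the correct factor held fixed.
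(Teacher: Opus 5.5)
Your proof is correct: each of the six implications checks out, including the variable renamings in Eqs.~\eqref{transNP1} and \eqref{transNP2}.

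The organization differs from the paper's. The paper proves the cycle (a)$\Rightarrow$(b)$\Rightarrow$(c)$\Rightarrow$(a) and then (d)$\Leftrightarrow$(b). Your (b)$\Rightarrow$(c) and (c)$\Rightarrow$(a) are essentially the paper's, but the other links are different.

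Starting from (a), the paper proves (b) by using Eqs.~\eqref{transNP1}, \eqref{transNP2} and \eqref{NP1} to show that $E=(x\cdot z)\circ y-z\circ(x\cdot y)$ satisfies $E=2E$. Your substitution of Eq.~\eqref{NP1} into Eq.~\eqref{transNP2} instead reaches (c) in one line, without using Eq.~\eqref{transNP1}.

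For (d), the paper writes out the two centroid defects of $L_\circ(z)$ on $(A,\cdot)$. The first, $x\cdot(z\circ y)-(z\circ x)\cdot y$, vanishes in every transposed Novikov-Poisson algebra; it is the identity later recorded as Eq.~\eqref{Tid1}. The second equals $\frac{1}{2}\bigl(z\circ(x\cdot y)-(x\cdot y)\circ z\bigr)$. So a single computation yields both directions and shows that (d) is just identity (b) in disguise.

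Your (c)$\Leftrightarrow$(d) argument is equally short and routes everything through one hub. What it does not bring out is that half of condition (d) holds automatically.
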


\begin{proof}
$\eqref{trans-NP1} \Rightarrow \eqref{trans-NP2} $:\:For all $x,y,z\in A$, by Eqs.~\eqref{transNP1},~\eqref{transNP2} and ~\eqref{NP1}, we obtain
\begin{eqnarray*}
 (x\cdot y) \circ z-z \circ (x\cdot y) &=& (x\cdot z) \circ y- z \circ (x\cdot y)\\
&=& 2x\cdot (z \circ y) - 2z \circ (x\cdot y) \\
&=&2(x\cdot z) \circ y - 2z \circ (x\cdot y).
\end{eqnarray*}
Thus, we obtain $(x\cdot y) \circ z-z \circ (x\cdot y)=0$.

$\eqref{trans-NP2}\Rightarrow\eqref{trans-NP3}$:\: By \eqref{trans-NP2}, for all $x,y,z\in A$, we obtain
\begin{eqnarray*}
&&z\cdot (x\circ y)=\frac{1}{2}(z\cdot x) \circ y + \frac{1}{2}x \circ (y\cdot z) = \frac{1}{2}(z\cdot x) \circ y + \frac{1}{2}(y\cdot z) \circ x = (z\cdot x) \circ y,\end{eqnarray*}
and
\begin{eqnarray*}
&&z \cdot (x \circ y) = \frac{1}{2}(z\cdot x) \circ y + \frac{1}{2}x \circ (z\cdot y)=\frac{1}{2}(z\cdot y) \circ x + \frac{1}{2}x \circ (z\cdot y) = x \circ (z\cdot y).
\end{eqnarray*}
Therefore, we have $L_\cdot(z) \in \Gamma((A,\circ))$ for all $z\in A$.

$\eqref{trans-NP3}\Rightarrow\eqref{trans-NP1}$: For all $x,y,z\in A$, by the definition of centroid, we have
\begin{align*}
x\cdot(y \circ z)&\!=\!(x\cdot y) \circ z ,\\
(x \circ y)\cdot z - (y \circ x)\cdot z&\!=\!x \circ (y\cdot z) - y \circ (x\cdot z).
\end{align*}
Therefore, $(A,\cdot, \circ)$ is a Novikov-Poisson algebra.

$\eqref{trans-NP4}\Leftrightarrow\eqref{trans-NP2}$: For all $x,y,z\in A$, by the definition of the {\transNovikovpoisson algebra},we have
\begin{eqnarray*}
&&x\cdot(z\circ y)-(z\circ x)\cdot y=\frac{1}{2}(x\cdot z)\circ y+\frac{1}{2}z\circ (x\cdot y)-\frac{1}{2}(y\cdot z)\circ x-\frac{1}{2}z\circ (x\cdot y)=0,
\end{eqnarray*}
and
\begin{eqnarray*}
&&z\circ (x\cdot y)-(z\circ x)\cdot y=z\circ (x\cdot y)-\frac{1}{2}(z\cdot y)\circ x-\frac{1}{2}z\circ (x\cdot y)=\frac{1}{2}(z\circ (x\cdot y)-(x\cdot y)\circ z).
\vspc
\end{eqnarray*}
Therefore, we get $\eqref{trans-NP4}\Leftrightarrow\eqref{trans-NP2}$.
\end{proof}

\begin{cor}
Let $(A,\cdot,\circ)$ be a {\transNovikovpoisson algebra}. If $(A,\cdot)$ has a unit $e$, then $(A,\circ)$ is a commutative Novikov algebra, and hence $(A,\cdot,\circ)$ is a Novikov-Poisson algebra.
\end{cor}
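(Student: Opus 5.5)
The plan is to use the unit $e$ of $(A,\cdot)$ in the two compatibility conditions \eqref{transNP1} and \eqref{transNP2}, and then invoke Proposition~\ref{guanxi}.

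First, set $x=e$ in \eqref{transNP1}. This gives $(e\cdot y)\circ z=(e\cdot z)\circ y$, that is,
\begin{eqnarray*}
y\circ z=z\circ y\quad\text{for all } y,z\in A.
\end{eqnarray*}
So $(A,\circ)$ is commutative, and being a Novikov algebra by assumption, it is a commutative Novikov algebra.

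Next, I would check condition (\ref{trans-NP2}) of Proposition~\ref{guanxi}, namely $(x\cdot y)\circ z=z\circ(x\cdot y)$ for all $x,y,z\in A$. This is immediate from the commutativity of $\circ$ just obtained, applied to the pair $x\cdot y$ and $z$.

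Proposition~\ref{guanxi} then gives (\ref{trans-NP1}), so $(A,\cdot,\circ)$ is a Novikov-Poisson algebra.

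As a sanity check, the Novikov-Poisson identities can also be read off directly. Putting $x=e$ in \eqref{transNP2} and using commutativity of $\circ$ gives $2z\cdot(e\circ y)=z\circ y+e\circ(z\cdot y)$. This is consistent with $L_\cdot(z)$ lying in the centroid, but it is not needed for the argument.

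None of the steps is a real obstacle; the only point needing care is specializing \eqref{transNP1} at the unit, which is what produces the commutativity of $\circ$.
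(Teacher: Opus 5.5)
Your proposal is correct and follows the paper's proof: specializing \eqref{transNP1} at the unit gives commutativity of $\circ$, which yields condition (\ref{trans-NP2}) of Proposition~\ref{guanxi}, and hence the Novikov-Poisson property. The closing sanity check is accurate but, as you say, not needed.
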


\begin{proof}
The commutativity of $(A,\circ)$ follows from the computation:\vspa
\begin{eqnarray*}
&&x\circ y = (x\cdot e)\circ y = (y\cdot e)\circ x = y\circ x.
\vspa
\end{eqnarray*}
This implies that $(A,\cdot,\circ)$ satisfies \eqref{trans-NP2} in  Proposition \ref{guanxi}. Therefore, $(A,\cdot,\circ)$ is a Novikov-Poisson algebra.
\end{proof}

However, the converse is not true, a {\transNovikovpoisson algebra} that is a Novikov-Poisson algebra is not necessarily a commutative Novikov algebra. A counterexample is provided by the following three-dimensional {\transNovikovpoisson algebra}.\vspa

\begin{ex}
Let $A$ be a 3-dimensional vector space over $\mathbb{C}$ with a basis $\{e_1, e_2,e_3\}$. By \cite{erwei}, $(A, \circ)$ is a Novikov algebra with non-zero products given by
\begin{equation*}
 e_2 \circ e_3 = e_1, \quad e_3 \circ e_2 = -e_1.
\end{equation*}
 Define a commutative associative algebra \((A, \cdot)\), whose
 non-zero products are given by
 \begin{eqnarray*}
&&e_2\cdot e_2 = ne_1,\quad e_2\cdot e_3=le_1=e_3\cdot e_2,\quad  e_3\cdot e_3=ke_1.
 \end{eqnarray*}
where $n,\:l,\:k\in \mathbb{C}$. By a straightforward computation,  $(A,\cdot,\circ)$ is both a {\transNovikovpoisson algebra} and a Novikov-Poisson algebra. However, $(A, \circ)$ is not a commutative Novikov algebra.
\end{ex}

\subsubsection{Identities in {\transNovikovpoisson algebras} and classification of 2-dimensional {\transNovikovpoisson algebras} over $\mathbb{C}$}
\begin{pro}
Let $(A,\cdot,\circ)$ be a {\transNovikovpoisson algebra}. Then the following identities hold for all $x,y,z,h\in A$:
\begin{eqnarray}
\label{Tid1}(x\circ y)\cdot z&=&(x\circ z)\cdot y,\\
\label{Tid2}(x\circ y)\circ (h\cdot z)&=&(x\circ z)\circ (h\cdot y),\\
\label{Tid3}(x\circ y)\circ (h\cdot z)-(y\circ x)\circ (h\cdot z)&=&(h\cdot x)\circ (y\circ z)-(h\cdot y)\circ (x\circ z).
\end{eqnarray}
Moreover, if the characteristic of ${\bf k}$ is not equal to $3$, then the following identity also holds for all $x,y,z,h\in A$:
\begin{eqnarray}
\label{Tid4}(x\circ y)\circ (h\cdot z)&=&(y\circ x)\circ (h\cdot z).
\end{eqnarray}
\end{pro}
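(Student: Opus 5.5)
The plan is to derive all four identities by direct manipulation, using only the axioms \eqref{Novikov1}, \eqref{Novikov2}, \eqref{transNP1} and \eqref{transNP2}. Each new identity is then fed into the next one. The basic tool is \eqref{transNP2}, which lets us move the commutative product $h\cdot$ across $\circ$ in either argument:
\[
a\circ(h\cdot b)=2h\cdot(a\circ b)-(h\cdot a)\circ b,\qquad (h\cdot a)\circ b=2h\cdot(a\circ b)-a\circ(h\cdot b).
\]

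For \eqref{Tid1}, write $2z\cdot(x\circ y)=(z\cdot x)\circ y+x\circ(z\cdot y)$ and $2y\cdot(x\circ z)=(y\cdot x)\circ z+x\circ(y\cdot z)$. By \eqref{transNP1} and commutativity, $(z\cdot x)\circ y=(x\cdot y)\circ z=(y\cdot x)\circ z$. The second terms coincide by commutativity of $\cdot$, so the two left sides agree.

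For \eqref{Tid2}, apply the first displayed rule with $a=x\circ y$, $b=z$:
\[
(x\circ y)\circ(h\cdot z)=2h\cdot((x\circ y)\circ z)-(h\cdot(x\circ y))\circ z .
\]
The first term is symmetric in $y,z$ by \eqref{Novikov1}. For the second term, \eqref{Tid1} gives $h\cdot(x\circ y)=y\cdot(x\circ h)$. Then \eqref{transNP1} gives $(y\cdot(x\circ h))\circ z=(z\cdot(x\circ h))\circ y=(h\cdot(x\circ z))\circ y$, which is also symmetric in $y,z$.

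For \eqref{Tid3}, expand the left side with the first rule and the right side with the second rule. The terms $2h\cdot(\cdots)$ then match exactly by \eqref{Novikov2}. What remains to show is
\[
(h\cdot(x\circ y-y\circ x))\circ z = x\circ(h\cdot(y\circ z))-y\circ(h\cdot(x\circ z)).
\]
I expect this to be the main obstacle, because naive reapplication of \eqref{transNP2} is circular. I would first rewrite $h\cdot(y\circ z)=z\cdot(y\circ h)$ and $h\cdot(x\circ y)=y\cdot(x\circ h)$ using \eqref{Tid1}. Then I would apply \eqref{transNP1} to move $z$ between arguments, and use \eqref{Novikov2} in the form $x\circ(y\circ w)-y\circ(x\circ w)=(x\circ y-y\circ x)\circ w$ with $w$ a product. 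If this gets messy, a fallback is Theorem \ref{thmm2}. Pass to the transposed Poisson algebra $A[t,t^{-1}]$, use the known identities of transposed Poisson algebras from \cite{Bai}, and compare coefficients of suitable monomials in $m,n,k,l$ to read off \eqref{Tid3}.

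For \eqref{Tid4}, set $F(x,y,z)=(x\circ y)\circ(h\cdot z)-(y\circ x)\circ(h\cdot z)$, with $h$ fixed. By \eqref{Tid2}, $(x\circ y)\circ(h\cdot z)$ is symmetric in $y,z$. By \eqref{Tid3}, $F(x,y,z)$ equals $(h\cdot x)\circ(y\circ z)-(h\cdot y)\circ(x\circ z)$. I would rewrite each of these terms via \eqref{transNP1} and \eqref{Tid2} so that they too become expressions $(u\circ v)\circ(h\cdot w)$. The aim is a relation saying that $F$ is symmetric under the transposition swapping its last two arguments, in addition to being antisymmetric in its first two. Summing this relation cyclically over $(x,y,z)$ should yield $3F(x,y,z)=0$, and dividing by $3$ (allowed since $\operatorname{char}{\bf k}\neq3$) gives \eqref{Tid4}.
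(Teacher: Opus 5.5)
Your \eqref{Tid1} argument is the paper's, and your \eqref{Tid2} argument is correct by a different, more hands-on route. The paper instead specializes the general $\frac12$-derivation identity \eqref{1/2id1} to $\varphi=L_\cdot(h)$ and then uses \eqref{Novikov1}.

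The genuine gap is \eqref{Tid3}: you stop at the residual identity without proving it, and your reason for abandoning the direct route is mistaken. Write $T$ for the left side minus the right side of \eqref{Tid3}, and set $R=(h\cdot[x,y])\circ z-x\circ(h\cdot(y\circ z))+y\circ(h\cdot(x\circ z))$ with $[x,y]=x\circ y-y\circ x$. Your first step shows $T=-R$.

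Re-expanding is not circular. Split $h\cdot(x\circ y)$, $h\cdot(y\circ x)$, $h\cdot(y\circ z)$ and $h\cdot(x\circ z)$ inside $R$ by \eqref{transNP2}. Then apply \eqref{Novikov2} to the triples $(x,h\cdot y,z)$, $(h\cdot x,y,z)$ and $(x,y,h\cdot z)$. This gives $2R=-T$, hence $T=\tfrac12T$ and $T=0$. This two-level expansion is exactly the paper's proof of Lemma~\ref{1/2id} for $\varphi=L_\cdot(h)$.

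Your \eqref{Tid1}-based idea can also be finished. Using \eqref{Tid1}, \eqref{transNP1}, \eqref{transNP2} and \eqref{Novikov2}, the residual is equivalent to $z\cdot([x,y]\circ h)=0$. This holds because \eqref{Tid1} makes the expression symmetric in $(z,h)$. On the other hand, \eqref{Novikov2} and two applications of \eqref{Tid1}, namely $z\cdot(x\circ(y\circ h))=h\cdot(y\circ(x\circ z))$, make it antisymmetric in $(z,h)$.

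The affinization fallback does not cover the statement. Theorem~\ref{thmm2} needs characteristic $0$, while the proposition is asserted over any field of characteristic $\neq 2$. You also do not say which identity of \cite{Bai} would yield \eqref{Tid3}.

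For \eqref{Tid4}, your plan never actually derives a relation. \eqref{transNP1} only moves $y\circ z$ into the $\cdot$-product, e.g.\ $(h\cdot x)\circ(y\circ z)=(h\cdot(y\circ z))\circ x$, and \eqref{Tid2} does not apply to that shape. To reach the form $(u\circ v)\circ(h\cdot w)$ you must split $h\cdot(y\circ z)$ by \eqref{transNP2}. Once you do, what comes out relates $F$ to a multiple of itself rather than to a permuted copy. By \eqref{Tid3} and \eqref{transNP1},
\[
F(x,y,z)=(h\cdot(y\circ z))\circ x-(h\cdot(x\circ z))\circ y .
\]
Now expand by \eqref{transNP2}. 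The terms $\tfrac12((h\cdot y)\circ z)\circ x-\tfrac12((h\cdot x)\circ z)\circ y$ cancel by \eqref{Novikov1} and \eqref{transNP1}. The remaining terms satisfy $\tfrac12(y\circ(h\cdot z))\circ x-\tfrac12(x\circ(h\cdot z))\circ y=-\tfrac12F(x,y,z)$ by \eqref{Novikov1}. So $F=-\tfrac12F$, i.e.\ $\tfrac32F=0$, which is exactly where $\operatorname{char}{\bf k}\neq 3$ enters.

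Your counting is also inconsistent. Antisymmetry in $(x,y)$ together with symmetry in $(y,z)$ would already force $2F=0$, with no cyclic sum and no use of $\operatorname{char}{\bf k}\neq3$. So the mechanism you describe does not match the hypothesis of the statement.
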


\begin{proof}
For all $x,y,z\in A$, by Eqs.~\eqref{transNP1} and~\eqref{transNP2}, we have \vspa
\begin{eqnarray*}
&&(x\circ y)\cdot z=\frac{1}{2}(z\cdot x)\circ y+\frac{1}{2}x\circ (z\cdot y)=\frac{1}{2}(x\cdot y)\circ z+\frac{1}{2}x\circ (z\cdot y)=y\cdot (x\circ z).
\end{eqnarray*}
Hence, Eq.~\eqref{Tid1} holds.

Note that $L_\cdot (h)$ is a $\frac{1}{2}$-derivation of $(A,\circ)$. Therefore, Eqs.~\eqref{Tid2} and~\eqref{Tid3} are special cases of Eqs.~\eqref{1/2id1} and \eqref{1/2id2} in Lemma~\ref{1/2id}, respectively. For the details, one can refer to the proof of Lemma~\ref{1/2id}.

By Eqs.~\eqref{Novikov1}, ~\eqref{transNP1},~\eqref{transNP2} and ~\eqref{Tid3}, we obtain
\begin{eqnarray*}
0&=&(h\cdot(y\circ z))\circ x-(h\cdot(x\circ z))\circ y-(x\circ y)\circ (h\cdot z)+(y\circ x)\circ (h\cdot z)\\
&=&\frac{1}{2}((h\cdot y)\circ z)\circ x+\frac{1}{2}(y\circ(h\cdot z))\circ x-\frac{1}{2}((h\cdot x)\circ z)\circ y-\frac{1}{2}(x\circ (h\cdot z))\circ y\\
&&\quad-(x\circ y)\circ (h\cdot z)+(y\circ x)\circ (h\cdot z)\\
&=&\frac{3}{2}(y\circ x)\circ (h\cdot z)-\frac{3}{2}(x\circ y)\circ (h\cdot z).
\end{eqnarray*}
Therefore, if the characteristic of ${\bf k}$ is not equal to $3$, Eq.~\eqref{Tid4} holds.
\end{proof}

For classifying {\transNovikovpoisson} algebras, we can consider compatible commutative associative algebra structures on the known Novikov algebras.

Let $\{e_1,\ldots, e_n\}$ be a basis of a {\transNovikovpoisson algebra} \( (A,\cdot,\circ) \). Obviously, $\cdot$ and $\circ$ are determined by the following characteristic matrices
\begin{eqnarray*}
(C_{ij})= \begin{pmatrix}\sum_{k=1}^n c_{11}^k e_k & \ldots & \sum_{k=1}^n c_{1n}^k e_k \\
\ldots & \ldots & \ldots \\
\sum_{k=1}^n c_{n1}^k e_k & \ldots & \sum_{k=1}^n c_{nn}^k e_k
\end{pmatrix},
(D_{ij})= \begin{pmatrix}
\sum_{k=1}^n d_{11}^k e_k & \ldots & \sum_{k=1}^n d_{1n}^k e_k \\
\ldots & \ldots & \ldots \\
\sum_{k=1}^n d_{n1}^k e_k & \ldots & \sum_{k=1}^n d_{nn}^k e_k
\end{pmatrix},
\end{eqnarray*}
where \( e_i \cdot e_j = \sum_{k=1}^n c_{ij}^k e_k \) and \( e_i \circ e_j = \sum_{k=1}^n d_{ij}^k e_k \).

Suppose that $(A, \circ)$ is a known Novikov algebra. For characterizing compatible commutative associative algebra structures on $(A, \circ)$,  by the definition of transposed Novikov-Poisson algebras, we need to solve the following equations
\begin{eqnarray}
&&\label{erwei1}c_{ij}^p = c_{ji}^p, \quad\sum_{l=1}^n c_{ij}^l c_{lk}^p = \sum_{l=1}^n c_{jk}^l c_{il}^p,\\
&&\label{erwei2}\sum_{l = 1}^{n} c_{ij}^{l} d_{lk}^{p} = \sum_{l = 1}^{n} c_{ik}^{l} d_{lj}^{p},\, p = 1, \ldots, n;\\
&&\label{erwei3}2\sum_{l = 1}^{n} d_{ij}^{l} c_{kl}^{p} = \sum_{l = 1}^{n} c_{ki}^{l} d_{lj}^{p} + \sum_{l = 1}^{n} c_{kj}^{l} d_{il}^{p},\, p = 1, \ldots, n.
\end{eqnarray}

Note that the classification of 2-dimensional Novikov algebras over $\mathbb{C}$ up to isomorphism was given in ~\cite{erwei}. Based on this classification result, we give a classification of 2-dimensional transposed Novikov-Poisson algebras up to isomorphism.
\begin{pro}\label{2-dim}
Let $(A, \cdot, \circ)$ be a 2-dimensional transposed Novikov-Poisson algebra over $\mathbb{C}$ with a non-zero binary operation $\circ$. Then $(A, \cdot, \circ)$ is isomorphic to one of the following cases:
\begin{table}[h]
\begin{tabular}{|c|c|}
\hline
Characteristic matrix of $(A, \circ)$  & Compatible characteristic matrix of $(A, \cdot)$ \\
\hline
$(T2)$ $\begin{pmatrix} 0 & 0 \\ 0 & e_1 \end{pmatrix}$ & $\begin{pmatrix} 0 & me_1 \\ me_1 & ne_1+me_2  \end{pmatrix}$ \\
\hline
$(T3)$ $\begin{pmatrix} 0 & 0 \\ -e_1 & 0 \end{pmatrix}$ & $\begin{pmatrix} 0 & 0 \\ 0 & 0 \end{pmatrix}$ \\
\hline
$(N1)$ $\begin{pmatrix} e_1 & 0 \\ 0 & e_2 \end{pmatrix}$ & $\begin{pmatrix} ne_1 & 0 \\ 0 & me_2 \end{pmatrix}$ \\
\hline
$(N2)$ $\begin{pmatrix} 0 & 0 \\ 0 & e_2 \end{pmatrix}$ &  $\begin{pmatrix} ne_1 & 0 \\ 0 & me_2 \end{pmatrix}$ \\
\hline
$(N3)$ $\begin{pmatrix} 0 & e_1 \\ e_1 & e_2 \end{pmatrix}$  & $\begin{pmatrix} 0 & ne_1 \\ ne_1 & me_1+ne_2 \end{pmatrix}$ \\
\hline
$(N4)$ $\begin{pmatrix} 0 & e_1 \\ 0 & e_2 \end{pmatrix}$ & $\begin{pmatrix} 0 & 0 \\ 0 & 0 \end{pmatrix}$ \\
\hline
$(N5)$ $\begin{pmatrix} 0 & e_1 \\ 0 & e_1 + e_2 \end{pmatrix}$  & $\begin{pmatrix} 0 &0 \\ 0 & 0 \end{pmatrix}$ \\
\hline
$(N6)$ $\begin{pmatrix} 0 & e_1 \\ le_1 & e_2 \end{pmatrix},l\neq0,1$ & $\begin{pmatrix} 0 & 0 \\ 0 & 0 \end{pmatrix}$ \\
\hline
\end{tabular}
\end{table}
\end{pro}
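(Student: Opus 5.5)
Since $\circ$ is non-zero, $(A,\circ)$ is isomorphic to one of the non-trivial 2-dimensional Novikov algebras in the classification of \cite{erwei}. I would identify these with the algebras $(T2)$, $(T3)$, $(N1)$--$(N6)$ in the table; $(T1)$ is presumably the zero algebra and is excluded. For each such $(A,\circ)$ I then solve Eqs.~\eqref{erwei1}--\eqref{erwei3} for the structure constants $c_{ij}^k$ of a commutative product. There are six unknowns $c_{11}^k$, $c_{12}^k$, $c_{22}^k$ with $k=1,2$. My plan is to impose the linear conditions first, since Eqs.~\eqref{erwei2} and \eqref{erwei3} are linear in the $c$'s once the $d$'s are fixed. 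Only after that do I impose associativity, which is quadratic, on the surviving family.

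\textbf{Linear constraints.} I would use Eq.~\eqref{transNP1} in the form $(e_i\cdot e_j)\circ e_k=(e_i\cdot e_k)\circ e_j$, Eq.~\eqref{transNP2}, and the derived identity Eq.~\eqref{Tid1}, all on basis triples, to cut down the $c$'s quickly.

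For $(N4)$, $(N5)$, $(N6)$: in these algebras $L_\circ(e_2)$ is invertible, up to the parameter $l\neq 0$. Eq.~\eqref{transNP2} with $x=y=e_2$, together with $(T1)$-type relations, should force $e_2\cdot e_i$ into the annihilator-like part. From there I expect to reach $\cdot=0$. A useful shortcut is the corollary of Proposition~\ref{guanxi}: a unit for $\cdot$ would force $\circ$ to be commutative, which rules out the non-commutative cases early.

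For $(T3)$, where $\circ$ is anticommutative and nilpotent, Eq.~\eqref{Tid4} and Eq.~\eqref{transNP2} should kill every coefficient. For the commutative cases $(N1)$, $(N2)$, $(N3)$, $(T2)$, I expect Proposition~\ref{guanxi} to help. There $(A,\cdot,\circ)$ should also be Novikov-Poisson, so $L_\cdot(x)\in\Gamma((A,\circ))$, and this is a clean linear condition on the $c$'s: the solutions are the centroid-compatible multiplications listed in the table.

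\textbf{Associativity.} I would then impose associativity on the remaining parameter families and check that the listed ones survive. In $(N3)$, for example, $e_2\cdot e_2=me_1+ne_2$ and $e_1\cdot e_2=ne_1$ give $(e_2e_2)e_2=mne_1+n(ne_1...)$, which is consistent.

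\textbf{Normalization.} Finally I would use the automorphism group of each Novikov algebra, i.e.\ the invertible maps preserving $\circ$, to normalize the parameters. This is only needed where the table does not already list them freely; since the table keeps $m,n$ free, little normalization is required.

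The main obstacle is bookkeeping: carrying out the case-by-case solution of the nonlinear system, especially for $(T2)$ and $(N3)$, where nonzero solutions exist and one must make sure no branch is missed. For each case I would first write out all instances of Eq.~\eqref{Tid1} and Eq.~\eqref{transNP1} to get simple linear relations, then substitute these into Eq.~\eqref{transNP2}.
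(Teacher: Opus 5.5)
Your plan matches the paper's proof: reduce to the Novikov algebras $(T2)$, $(T3)$, $(N1)$--$(N6)$ of \cite{erwei} and solve Eqs.~\eqref{erwei1}--\eqref{erwei3} case by case. Your treatment of the commutative cases is a clean streamlining that the paper does not spell out. When $\circ$ is commutative, \eqref{transNP1} and \eqref{transNP2} together are equivalent to $L_\cdot(x)\in\Gamma((A,\circ))$ for all $x$; one direction is Proposition~\ref{guanxi} and the converse is immediate. So $(T2)$, $(N1)$, $(N2)$, $(N3)$ reduce to computing a centroid and imposing commutativity of $\cdot$, which yields exactly the listed families.

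Several of your structural remarks about the non-commutative cases are wrong and would mislead the computation.

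In $(N4)$ and $(N5)$, $L_\circ(e_2)$ is not invertible, since $e_2\circ e_1=0$; it is $R_\circ(e_2)$ that is invertible. Indeed $u\circ v=v_2u$ in $(N4)$, and $u\circ v=v_2T(u)$ with $T$ invertible in $(N5)$. So \eqref{transNP1} becomes $z_2(x\cdot y)=y_2(x\cdot z)$, and $y=e_1$, $z=e_2$ give $A\cdot e_1=0$. Then \eqref{transNP2} at $x=e_1$, $y=z=e_2$ and at $x=y=z=e_2$ kills $e_2\cdot e_2$.

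In $(N6)$, write $e_2\cdot e_2=c_1e_1+c_2e_2$. Then \eqref{transNP1} forces $e_1\cdot e_1=0$ and $e_1\cdot e_2=lc_2e_1$, and the same two instances of \eqref{transNP2} give $(1-l)c_1=0=(l-1)c_2$.

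The algebra $(T3)$, with $e_2\circ e_1=-e_1$, is neither anticommutative nor nilpotent: $A^n=\mathbb{C}e_1$ for all $n\geq 2$. Moreover \eqref{Tid4} is vacuous there, since $(A\circ A)\circ A=0$. It is \eqref{transNP2} alone that forces $\cdot=0$.

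The unit corollary only excludes unital products, so it settles none of these cases.

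Your associativity check in $(N3)$ is empty, because $(e_2\cdot e_2)\cdot e_2=e_2\cdot(e_2\cdot e_2)$ holds in any commutative algebra. The relevant identity is $(e_1\cdot e_2)\cdot e_2=e_1\cdot(e_2\cdot e_2)$, and both sides equal $n^2e_1$.

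With these corrections, the brute-force procedure in your last paragraph goes through.
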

\begin{proof}
By the classification result in \cite{erwei}, 2-dimensional non-trivial Novikov algebras up to isomorphism are of the form $(T2)-(T3)$ and $(N1)-(N6)$. Then we can obtain compatible commutative associative algebra structures on these Novikov algebras case by case by direct computations.
\end{proof}
\begin{rmk}
\delete{We observe that all 2-dimensional {\transNovikovpoisson algebras} over $\mathbb{C}$ with the non-trivial commutative associative algebra structure up to isomorphism are commutative Novikov algebras. By using the classification of centroids of 2-dimensional associative algebras over $\mathbb{C}$~\cite{Centroids}, it can be shown that every such 2-dimensional {\transNovikovpoisson algebra} over $\mathbb{C}$ is obtained from some commutative associative algebra by the construction given in Proposition \ref{centroids}.}
The affinization of two-dimensional Novikov algebras produces kinds of infinite-dimensional Lie algebras~\cite{Virasoro3}. Similarly, we can also obtain many infinite-dimensional transposed Poisson algebra structures from the classification of 2-dimensional transposed Novikov-Poisson algebras via the affinization. \delete{So we can derive the corresponding transposed Poisson algebra structures on these Lie algebras.} For example, the Lie algebra $L(N_3)$ via the affinization of the Novikov algebra $N_3$ which is isomorphic to the semi-direct sum of the Witt algebra and its adjoint module, i.e., the centerless $W(2,2)$~\cite{W22}. By Proposition \ref{2-dim}, there are non-trivial transposed Poisson algebra structures on $L(N_3)$, which coincides with \cite[Theorem 22]{1-2}.
\end{rmk}

\subsubsection{Tensor products}

%\subsection{Related structures and tensor products of {\transNovikovpoisson algebra}}
An important property of {\transNovikovpoisson algebras} is that they are closed under taking tensor products.

\begin{pro}\label{tensoris}
Let $(A_1, \cdot_1, \circ_1)$ and $(A_2, \cdot_2, \circ_2)$ be two {\transNovikovpoisson algebras}. Define two binary operations $\cdot$ and $\circ$ on $A_1 \otimes A_2$ by
\begin{eqnarray}
&\label{tensor1}(a_1 \otimes a_2) \cdot (b_1 \otimes b_2) = a_1 \cdot_1 b_1 \otimes a_2 \cdot_2 b_2,\\
&\label{tensor2}(a_1 \otimes a_2) \circ (b_1 \otimes b_2) = a_1 \cdot_1 b_1 \otimes a_2 \circ_2 b_2 + a_1 \circ_1 b_1 \otimes a_2 \cdot_2 b_2,
\end{eqnarray}
for all $a_1,b_1\in A_1$ and $a_2,b_2\in A_2$. Then $(A_1\otimes A_2,\cdot,\circ)$ is a {\transNovikovpoisson algebra}.
\end{pro}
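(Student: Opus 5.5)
We check the four groups of axioms directly on pure tensors $x=a_1\otimes a_2$, $y=b_1\otimes b_2$, $z=c_1\otimes c_2$; everything extends by linearity. Commutativity and associativity of $(A_1\otimes A_2,\cdot)$ are immediate from those of the factors. The operation $\circ$ in Eq.~\eqref{tensor2} splits as $\circ=\circ'+\circ''$, where $x\circ' y=a_1\cdot_1 b_1\otimes a_2\circ_2 b_2$ and $x\circ'' y=a_1\circ_1 b_1\otimes a_2\cdot_2 b_2$. The two pieces are mirror images under swapping the factors. So in every identity that is linear in $\circ$, it suffices to treat $\circ'$.

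We first check the two compatibility conditions \eqref{transNP1} and \eqref{transNP2}, since they are linear in $\circ$. For \eqref{transNP1}, $(x\cdot y)\circ' z$ equals
\[
(a_1\cdot_1 b_1\cdot_1 c_1)\otimes\big((a_2\cdot_2 b_2)\circ_2 c_2\big).
\]
The first factor is symmetric in the three elements, and the second factor is symmetric in $b_2,c_2$ by \eqref{transNP1} in $A_2$. For \eqref{transNP2}, the first tensor factor of every term is $c_1\cdot_1 a_1\cdot_1 b_1$. The identity therefore reduces to \eqref{transNP2} in $A_2$, tensored with that common element.

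Next come the Novikov identities \eqref{Novikov1} and \eqref{Novikov2}, which are quadratic in $\circ$. We expand $(x\circ y)\circ z$ into the four terms coming from $\circ'\circ'$, $\circ'\circ''$, $\circ''\circ'$ and $\circ''\circ''$. Writing $abc$ for products inside one factor, the four terms are
\[
\begin{gathered}
(a_1b_1c_1)\otimes\big((a_2\circ_2 b_2)\circ_2 c_2\big),\qquad \big((a_1b_1)\circ_1 c_1\big)\otimes\big((a_2\circ_2 b_2)c_2\big),\\
\big((a_1\circ_1 b_1)c_1\big)\otimes\big((a_2b_2)\circ_2 c_2\big),\qquad \big((a_1\circ_1 b_1)\circ_1 c_1\big)\otimes(a_2b_2c_2).
\end{gathered}
\]
For \eqref{Novikov1}, each term must be symmetric in $b$ and $c$, or two mixed terms must swap into each other.
\begin{itemize}
\item The pure terms are symmetric by \eqref{Novikov1} in each factor.
\item For the first mixed term, $(a_1b_1)\circ_1 c_1$ is symmetric in $b_1,c_1$ by \eqref{transNP1}, and $(a_2\circ_2 b_2)c_2$ is symmetric in $b_2,c_2$ by identity \eqref{Tid1}.
\item The other mixed term is handled symmetrically.
\end{itemize}
So \eqref{Novikov1} holds term by term.

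For \eqref{Novikov2}, we expand $x\circ(y\circ z)$ in the same way and compare the associator $(x\circ y)\circ z-x\circ(y\circ z)$ with its $x\leftrightarrow y$ swap. The pure parts are handled by \eqref{Novikov2} in each factor. The mixed parts of the associator are
\[
\big((a_1b_1)\circ_1 c_1\big)\otimes\big((a_2\circ_2 b_2)c_2\big)+\big((a_1\circ_1 b_1)c_1\big)\otimes\big((a_2b_2)\circ_2 c_2\big)
-\big(a_1\circ_1(b_1c_1)\big)\otimes\big(a_2(b_2\circ_2 c_2)\big)-\big(a_1(b_1\circ_1 c_1)\big)\otimes\big(a_2\circ_2(b_2c_2)\big).
\]
This is the main obstacle: the mixed parts do not cancel individually, and we must show the total is symmetric in $(a,b)$. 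The plan is to rewrite everything into a normal form using \eqref{transNP2}, \eqref{Tid1} and \eqref{transNP1}. Specifically, \eqref{transNP2} gives
\[
a_1\circ_1(b_1c_1)=2b_1(a_1\circ_1 c_1)-(b_1a_1)\circ_1 c_1,
\]
and \eqref{Tid1} together with commutativity lets us move products in and out, for example $a_2(b_2\circ_2 c_2)=(b_2\circ_2 a_2)c_2$. After these substitutions, the mixed part should become a combination of terms such as $((a_1b_1)\circ_1 c_1)\otimes(\cdots)$, in which the $A_1$ factor is symmetric in $a_1,b_1$. The remaining antisymmetric pieces should then pair up across the two factors via the $\tfrac12$-derivation-type relation $2z\cdot(x\circ y)=(z\cdot x)\circ y+x\circ(z\cdot y)$, applied once in each factor. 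If this bookkeeping becomes unwieldy, a fallback is available: this product has the same shape as the known tensor-product constructions for Novikov algebras built from Novikov-Poisson-like data. One could rerun that proof, which needs only \eqref{Tid1}-type exchange identities, in place of the Novikov-Poisson axiom \eqref{NP1}.
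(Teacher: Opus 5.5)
Everything outside the mixed part of \eqref{Novikov2} is correct and matches the paper. That covers commutativity and associativity of $\cdot$, the reduction of \eqref{transNP1} and \eqref{transNP2} to $\circ'$ via the flip symmetry, and the term-by-term check of \eqref{Novikov1} using \eqref{transNP1} and \eqref{Tid1}. The gap is the symmetry in $(a,b)$ of the mixed part of the associator $(x\circ y)\circ z-x\circ(y\circ z)$. This is the only non-routine step of the proposition, and you never establish it. You only say the terms ``should'' reduce to a normal form and ``should'' pair up, so as written \eqref{Novikov2} is unproved. The fallback does not fill the gap. The tensor-product argument for Novikov--Poisson algebras really uses \eqref{NP1} and \eqref{NP2}, which fail here, and nothing guarantees it survives replacing them by \eqref{Tid1}.

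Your suggested substitution does close the gap in a few lines. Write juxtaposition for $\cdot_i$ and set $P_i=(a_ib_i)\circ_i c_i$, $Q_i=b_i(a_i\circ_i c_i)$ and $Q_i'=a_i(b_i\circ_i c_i)$ for $i=1,2$. The simultaneous swap $a\leftrightarrow b$ fixes $P_i$ and exchanges $Q_i$ with $Q_i'$. By \eqref{Tid1}, $(a_i\circ_i b_i)c_i=Q_i$. By \eqref{transNP2} with $z=b_i$, $x=a_i$, $y=c_i$, we get $a_i\circ_i(b_ic_i)=2Q_i-P_i$. Hence your four mixed terms sum to
\[
P_1\otimes Q_2+Q_1\otimes P_2-(2Q_1-P_1)\otimes Q_2'-Q_1'\otimes(2Q_2-P_2)
=P_1\otimes(Q_2+Q_2')+(Q_1+Q_1')\otimes P_2-2\big(Q_1\otimes Q_2'+Q_1'\otimes Q_2\big),
\]
which is visibly invariant under $a\leftrightarrow b$.

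With this inserted your proof is complete, and it is slightly more economical than the paper's. The paper first antisymmetrizes in $(a,b)$ to form the pieces $(B_2)$ and $(C_2)$. It then expands both into halves via \eqref{transNP2} and matches them term by term using \eqref{transNP1}. Your normal form instead makes the associator's mixed part manifestly symmetric before any antisymmetrization.
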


\begin{proof}
\delete{For notational simplicity, we suppress the subscripts of the commutative and associative multiplication operations $\cdot_1$ in $A_1$ and $\cdot_2$ in $A_2$ throughout this proof. Parentheses in products are omitted due to associativity.\par}
It is obvious that  $(A_1\otimes A_2,\cdot)$ is a commutative associative algebra. Since $(A_1,\cdot_1)$ and $(A_2,\cdot_2)$ are commutative associative algebras, then for all $a_1,b_1,c_1\in A_1$ and $a_2,b_2,c_2\in A_2$, we have
\begin{eqnarray*}
&&\left( (a_1 \otimes a_2) \circ (b_1 \otimes b_2) \right) \circ (c_1 \otimes c_2) \\
&&\quad= \left( (a_1\cdot_1b_1) \otimes (a_2 \circ_2 b_2) + (a_1 \circ_1 b_1) \otimes (a_2 \cdot_2b_2) \right) \circ (c_1 \otimes c_2)\\
&&\quad= (a_1 \cdot_1b_1 \cdot_1c_1) \otimes (a_2 \circ_2 b_2) \circ_2 c_2 + \left( (a_1\cdot_1 b_1) \circ_1 c_1 \right) \otimes \left( (a_2 \circ_2 b_2) \cdot_2c_2 \right) \\
&&\qquad+ \left( (a_1 \circ_1 b_1) \cdot_1 c_1 \right) \otimes \left( (a_2\cdot_2 b_2) \circ c_2 \right) + \left( (a_1 \circ_1 b_1) \circ_1 c_1 \right) \otimes (a_2\cdot_2 b_2 \cdot_2c_2).\end{eqnarray*}
Therefore, we obtain
\begin{eqnarray*}
&&\left( (b_1 \otimes b_2) \circ (a_1 \otimes a_2) \right) \circ (c_1 \otimes c_2)\\
&&\quad= (a_1\cdot_1 b_1\cdot_1 c_1) \otimes (b_2 \circ_2 a_2) \circ_2 c_2 + \left( (a_1\cdot_1 b_1) \circ_1 c_1 \right) \otimes \left( (b_2 \circ_2 a_2) \cdot_2 c_2 \right)\\
&&\qquad+ \left( (b_1 \circ_1 a_1) \cdot_1c_1 \right) \otimes \left( (a_2\cdot_2 b_2) \circ c_2 \right) + \left( (b_1 \circ_1 a_1) \circ_1 c_1 \right) \otimes (a_2\cdot_2 b_2\cdot_2 c_2).
\end{eqnarray*}
Taking the difference between two equalities above leads to
\begin{eqnarray*}
&&\left( (a_1 \otimes a_2) \circ (b_1 \otimes b_2) \right) \circ (c_1 \otimes c_2)-\left( (b_1 \otimes b_2) \circ (a_1 \otimes a_2) \right) \circ (c_1 \otimes c_2)=(B_1)+(B_2)+(B_3),
\end{eqnarray*}
where
\begin{eqnarray*}
&&(B_1)=(a_1\cdot_1b_1 \cdot_1c_1) \otimes \left( (a_2 \circ_2 b_2 - b_2 \circ_2 a_2) \circ_2 c_2 \right),\\
&&(B_2)=\left( (a_1\cdot_1 b_1) \circ_1 c_1 \right) \otimes \left( c_2 \cdot_2(a_2 \circ_2 b_2 - b_2 \circ_2 a_2) \right) \\
&&\qquad\quad+ \left( c_1\cdot_1 (a_1 \circ_1 b_1 - b_1 \circ_1 a_1) \right) \otimes \left( (a_2\cdot_2 b_2) \circ_2 c_2 \right)\\
&&(B_3)=\left( (a_1 \circ_1 b_1 - b_1 \circ_1 a_1) \circ_1 c_1 \right) \otimes (a_2 \cdot_2b_2\cdot_2 c_2) .
\end{eqnarray*}
By Eqs.~\eqref{tensor1} and ~\eqref{tensor2}, we have
\begin{eqnarray*}
&&(a_1 \otimes a_2) \circ \left( (b_1 \otimes b_2) \circ (c_1 \otimes c_2) \right) \\
&&\quad= (a_1 \otimes a_2) \circ \left( (b_1\cdot_1 c_1) \otimes (b_2 \circ_2 c_2) + (b_1 \circ_1 c_1) \otimes (b_2\cdot_2 c_2) \right)\\
&&\quad= (a_1\cdot_1 b_1\cdot_1 c_1) \otimes \left( a_2 \circ_2 (b_2 \circ_2 c_2) \right) + \left( a_1 \circ_1 (b_1\cdot_1 c_1) \right) \otimes \left( a_2\cdot_2 (b_2 \circ_2 c_2) \right) \\
&&\qquad+ \left( a_1 \cdot_1(b_1 \circ_1 c_1) \right) \otimes \left( a_2 \circ_2 (b_2\cdot_2 c_2) \right) + \left( a_1 \circ_1 (b_1 \circ_1 c_1) \right) \otimes (a_2 \cdot_2b_2 \cdot_2c_2).
\end{eqnarray*}
Therefore, we obtain
\begin{eqnarray*}
&&(b_1 \otimes b_2) \circ \left( (a_1 \otimes a_2) \circ (c_1 \otimes c_2) \right) \\
&&\quad= (a_1 \cdot_1b_1 \cdot_1c_1) \otimes \left( b_2 \circ_2 (a_2 \circ_2 c_2) \right) + \left( b_1 \circ_1 (a_1\cdot_1 c_1) \right) \otimes \left( b_2 \cdot_2(a_2 \circ_2 c_2) \right) \\
&&\qquad+ \left( b_1 \cdot_1(a_1 \circ_1 c_1) \right) \otimes \left( b_2 \circ_2 (a_2 \cdot_2c_2) \right) + \left( b_1 \circ_1 (a_1 \circ_1 c_1) \right) \otimes (a_2\cdot_2 b_2\cdot_2 c_2).
\end{eqnarray*}
Taking the difference between two equations above leads to
\begin{eqnarray*}
 &&(a_1 \otimes a_2) \circ \left( (b_1 \otimes b_2) \circ (c_1 \otimes c_2) \right)-(b_1 \otimes a_2) \circ \left( (a_1 \otimes b_2) \circ (c_1 \otimes c_2) \right)=( C_1) +( C_2) +( C_3),
\vspc
\end{eqnarray*}
where
\begin{eqnarray*}
&& (C_1) = (a_1 \cdot_1b_1\cdot_1 c_1) \otimes \left( a_2 \circ_2 (b_2 \circ_2 c_2) - b_2 \circ_2 (a_2 \circ_2 c_2) \right),\\
&&(C_2 )= \left( a_1 \circ_1 (b_1 \cdot_1c_1) \right) \otimes \left( a_2 \cdot_2(b_2 \circ_2 c_2) \right) + \left( a_1\cdot_1 (b_1 \circ_1 c_1) \right) \otimes \left( a_2 \circ_2 (b_2\cdot_2 c_2) \right) \\
&&- \left( b_1 \circ_1 (a_1\cdot_1 c_1) \right) \otimes \left( b_2 \cdot_2(a_2 \circ_2 c_2) \right) - \left( b_1 \cdot_1(a_1 \circ_1 c_1) \right) \otimes \left( b_2 \circ_2 (a_2\cdot_2 c_2) \right),\\
&&(C_3) = \left( a_1 \circ_1 (b_1 \circ_1 c_1) - (b_1 \circ_1 (a_1 \circ_1 c_1) \right) \otimes (a_2\cdot_2 b_2 \cdot_2c_2).
\end{eqnarray*}
Since $(A_1, \circ_1)$ and $(A_2, \circ_2)$ are Novikov algebras,  we have $(B_1) =( C_1)$, $(B_3) = (C_3)$.
Furthermore, by Eq.~\eqref{transNP2}, we obtain
\begin{eqnarray*}
(B_2) &=& \frac{1}{2} \left( (a_1 \cdot_1b_1) \circ_1 c_1 \right) \otimes \left( (c_2 \cdot_2a_2) \circ_2 b_2 + a_2 \circ_2 (c_2 \cdot_2b_2) \right) \\
&&\quad- \frac{1}{2} \left( (a_1\cdot_1 b_1) \circ_1 c_1 \right) \otimes \left( (b_2\cdot_2 c_2) \circ_2 a_2 + b_2 \circ_2 (c_2\cdot_2 a_2) \right)\\
&&\quad+ \frac{1}{2} \left( (c_1 \cdot_1a_1) \circ_1 b_1 + a_1 \circ_1 (b_1\cdot_1 c_1) \right) \circ \left( (a_2 \cdot_2b_2) \circ_2 c_2 \right) \\
&&\quad- \frac{1}{2} \left( b_1 \circ_1 (a_1 \cdot_1c_1) + (c_1\cdot_1 b_1) \circ_1 a_1 \right) \otimes \left( (a_2\cdot_2 b_2) \circ_2 c_2 \right),
\end{eqnarray*}
and
\begin{eqnarray*}
(C_2)&=&\frac{1}{2} \left( a_1 \circ_1 (b_1 \cdot_1c_1) \right) \otimes \left( (a_2\cdot_2 b_2) \circ_2 c_2 + b_2 \circ_2 (a_2 \cdot_2c_2) \right) \\
&&\quad+ \frac{1}{2} \left( (a_1 \cdot_1b_1) \circ_1 c_1 + b_1 \circ_1 (a_1\cdot_1 c_1) \right) \otimes \left( a_2 \circ_2 (b_2 \cdot_2c_2) \right) \\
&&\quad- \frac{1}{2} \left( b_1 \circ_1 (a_1\cdot_1 c_1) \right) \otimes \left( (a_2 \cdot_2b_2) \circ_2 c_2 + a_2 \circ_2 (b_2\cdot_2 c_2) \right)\\
&&\quad- \frac{1}{2} \left( (a_1\cdot_1 b_1) \circ_1 c_1 + a_1 \circ_1 (b_1\cdot_1 c_1) \right) \otimes \left( b_2 \circ_2 (a_2 \cdot_2c_2) \right)\\
&=& \frac{1}{2} \left( a_1 \circ_1 (b_1\cdot_1 c_1) \right) \otimes \left( (a_2\cdot_2 b_2) \circ_2 c_2 \right) + \frac{1}{2} \left( (a_1\cdot_1 b_1) \circ_1 c_1 \right) \otimes \left( a_2 \circ_2 (b_2\cdot_2 c_2) \right)\\
&&- \frac{1}{2} \left( b_1 \circ_1 (a_1\cdot_1 c_1) \right) \otimes \left( (a_2\cdot_2 b_2) \circ_2 c_2 \right) - \frac{1}{2} \left( (a_1 \cdot_1b_1) \circ_1 c_1 \right) \otimes \left( b_2 \circ_2 (a_2\cdot_2 c_2) \right) .
\end{eqnarray*}
Since $(a_1\cdot_1b_1)\circ_1c_1=(a_1\cdot_1c_1)\circ_1b_1$ and $(a_2\cdot_2b_2)\circ_2c_2=(a_2\cdot_2c_2)\circ_2b_2$, we have $(B_2)=(C_2)$. Therefore, Eq. (\ref{Novikov1}) holds.

Since
\begin{eqnarray*}
&&\big((a_1 \otimes a_2) \circ (b_1 \otimes b_2)\big) \circ (c_1 \otimes c_2)\\
&&\quad= \Big(\big(a_1\cdot_1b_1 \otimes (a_2 \circ_2 b_2)\big) + \big((a_1 \circ_1 b_1) \otimes a_2\cdot_2b_2\big)\Big) \circ (c_1 \otimes c_2) \\
&&\quad= (a_1\cdot_1b_1\cdot_1c_1) \otimes \big((a_2 \circ_2 b_2) \circ c_2\big) + \big((a_1 \cdot_1 b_1)\circ_1c_1\big) \otimes \big((a_2\circ_2b_2) \cdot_2 c_2\big)\\
&&\qquad+ \big((a_1 \circ_1 b_1) \circ c_1\big) \otimes (a_2\cdot_2b_2\cdot_2c_2) + \big((a_1 \circ_1 b_1)\cdot_1c_1\big) \otimes \big((a_2\cdot_2b_2) \circ_2 c_2\big) \\
&&\quad= (a_1\cdot_1b_1\cdot_1c_1) \otimes \big((a_2 \circ_2 c_2) \circ_2 b_2\big) + \big((a_1 \cdot_1 c_1) \circ_1 b_1\big) \otimes \big((a_2 \circ_2 c_2) \cdot_2 b_2\big)\\
&&\quad+ \big((a_1 \circ_1 c_1) \circ_1 b_1\big) \otimes (a_2\cdot_2b_2\cdot_2c_2) + \big((a_1 \circ_1 c_1)\cdot_1b_1\big) \otimes \big((a_2\cdot_2c_2) \circ_2 b_2\big) \\
&&\qquad= \Big((a_1 \otimes a_2) \circ (c_1 \otimes c_2)\Big) \circ (b_1 \otimes b_2),
\end{eqnarray*}
 $(A_1\otimes A_2,\cdot)$ is a Novikov algebra.

Since
\begin{eqnarray*}
&&2(c_1 \otimes c_2) \cdot \big((a_1 \otimes a_2) \circ (b_1 \otimes b_2)\big) \\
&&\quad= 2(c_1 \otimes c_2)\cdot \Big( a_1\cdot_1b_1 \otimes (a_2 \circ_2 b_2) + (a_1 \circ_1 b_1) \otimes (a_2\cdot_2b_2) \Big) \\
&&\quad=2(a_1\cdot_1b_1\cdot_1c_1) \otimes c_2\cdot_2(a_2 \circ_2 b_2) + 2\big(c_1\cdot_1(a_1 \circ_1 b_1)\big) \otimes (a_2\cdot_2b_2\cdot_2c_2)\\
&&\quad=(a_1\cdot_1b_1\cdot_1c_1)\otimes \big((c_2\cdot_2 a_2)\circ_2 b_2+a_2\circ_2(c_2\cdot_2 b_2)\big)\\
&&\qquad+\big((c_1\cdot_1 a_1)\circ_1 b_1+a_1\circ_1(c_1\cdot_1 b_1)\big)\otimes (a_2\cdot_2b_2\cdot_2c_2)\\
&&\quad=\big((a_1\cdot_1c_1 )\otimes (a_2\cdot_2c_2)\big) \circ (b_1 \otimes b_2)+(a_1 \otimes a_2) \circ \big((b_1\cdot_1c_1) \otimes (b_2\cdot_2c_2)\big) \\
&&\quad= \big((c_1 \otimes c_2)\cdot (a_1 \otimes a_2)\big) \circ (b_1 \otimes b_2)+(a_1 \otimes a_2) \circ \big((c_1 \otimes c_2)\cdot (b_1 \otimes b_2)\big),
\end{eqnarray*}
Eq.~\eqref{transNP2} holds.

Moreover, we have
\begin{eqnarray*}
&& \Big((a_1 \otimes a_2) \cdot (b_1 \otimes b_2)\Big) \circ (c_1 \otimes c_2)\\
 & &\quad= \big(a_1\cdot_1b_1 \otimes a_2\cdot_2b_2)\big) \circ (c_1 \otimes c_2) \\
 & &\quad= (a_1\cdot_1b_1\cdot_1c_1) \otimes \big((a_2\cdot_2b_2) \circ_2 c_2\big) +( \big(a_1\cdot_1b_1)\circ_1c_1) \otimes (a_2\cdot_2b_2\cdot_2c_2\big) \\
 & &\quad=(a_1\cdot_1c_1\cdot_1b_1) \otimes \big((a_2\cdot_2c_2)\circ_2 b_2\big) + \big((a_1\cdot_1c_1) \circ_1 b_1\big) \otimes (a_2\cdot_2b_2\cdot_2c_2) \\
 & &\quad= \Big((a_1 \otimes a_2) \cdot (c_1 \otimes c_2)\Big) \circ (b_1 \otimes b_2).
\end{eqnarray*}
 Therefore, $(A_1\otimes A_2, \cdot, \circ)$ is a transposed Novikov-Poisson algebra.
\end{proof}

\subsubsection{Constructions from  known transposed Novikov-Poisson algebras}
\delete{\begin{defi}~\cite{1-2}
Let $( A, \cdot )$ be an arbitrary associative algebra, and let $p$ and $q$ be two fixed elements of $A$. Then a new algebra is derived from $A$ by using the same vector space structure of $A$ but defining a new multiplication\vspb
\begin{eqnarray*}
x * y = x \cdot p \cdot y - y \cdot q \cdot x,\quad \text{for all}\:x,y\in A.
\vspb
\end{eqnarray*}
The resulting algebra is denoted by $A(p, q)$ and is called the $(p, q)$-mutation of the algebra $A$. In the commutative case, all $(p, q)$-mutations can be reduced to the case $q = 0$. More detail in ~\cite{tubiancankao}.
\vspa
\end{defi}

\begin{pro}
Let $(A, \cdot, \circ)$ be a {\transNovikovpoisson algebra}. Then every mutation $(A, \cdot_p)$ of $(A, \cdot)$ gives a {\transNovikovpoisson algebra} with the same Novikov multiplication.
\vspb
\end{pro}

\begin{proof}
Obviously, every mutation of an associative and commutative algebra gives an associative and commutative algebra. Since\vspa
\begin{eqnarray*}
&&2z \cdot_p (x\circ y) = 2 (z \cdot p) \cdot  (x\circ y)  = (z \cdot p\cdot x)\circ y + x \circ (z \cdot p \cdot y)= (z \cdot_p\cdot x)\circ y + x \circ (z \cdot_p \cdot y),\\
&& (x \cdot_p\cdot y)\circ z=(x \cdot p\cdot y)\circ z=(x \cdot p\cdot z)\circ y=(x \cdot_p\cdot z)\circ y.
\vspb
\end{eqnarray*}
Hence, $(A, \cdot_p, \circ)$ is a {\transNovikovpoisson algebra}.
\end{proof}

A deformation theory of Novikov algebras has been shown to be closely related to Novikov??¨¬CPoisson algebras ~\cite{someresult}. An similar connection also exists with {\transNovikovpoisson algebras}.
Let $(A, \circ)$ be a Novikov algebra, and $g_p : A \times A \to A$ be a binary product defined by\vspb
\begin{eqnarray}
g_q(a, b) = a * b + q G_1(a, b) + q^2 G_2(a, b) + q^3 G_3(a, b)+ \dots,
\end{eqnarray}
where $G_i$ are bilinear products with $G_0(a, b) = a \circ b$. We call $(A_q, g_q)$ a deformation of $(A, \circ)$ if $(A_q, g_q)$ is a family of Novikov algebras. In particular, we call $G_1$ a global deformation if the deformation is given by\vspb
\begin{eqnarray*}
g_q(a, b) = a * b + q G_1(a, b),
\end{eqnarray*}
And $G_1$ is a global deformation if and only if
\begin{eqnarray}
&&\label{G1}G_1(a, b\circ c) - G_1(a \circ b, c) + G_1(b \circ a, c) - G_1(b, a \circ c)\\
&&+ a \circ G_1(b, c) - G_1(a, b) \circ c + G_1(b, a) \circ c - b \circ G_1(a, c) = 0,\notag \\
&&\label{G2}G_1(a, b \circ c) - G_1(a \circ b, c) + G_1(b \circ a, c) - G_1(b, a \circ c) \\
&&+ a \circ G_1(b, c)- G_1(a, b) \circ c + G_1(b, a) \circ c - b \circ G_1(a, c) = 0,\notag \\
&&\label{G3}G_1(a, b) \circ c - G_1(a, c) \circ b + G_1(a \circ b, c) - G_1(a \circ c, b) = 0,\\
&&\label{G4}G_1(a, b) * c - G_1(a, c) * b + G_1(a * b, c) - G_1(a * c, b) = 0.
\vspb
\end{eqnarray}}

\begin{lem}
Let $(A,\cdot,\circ)$ be a {\transNovikovpoisson algebra}. Then \delete{both $G_1(a, b) = a \cdot b$ and $G_1(a, b) = x \cdot a \cdot b$ are the compatible global deformations of $(A, \circ)$, where $p$ is a fixed element in ${\bf k}$ or in $A$. Hence,} $(A, \circ_{x})$ is a Novikov algebra defined by
\begin{eqnarray*}
x \circ_{p} y := x \circ y + p \cdot x \cdot y\ \;\;\;\text{for all $x$, $y\in A$,}
\end{eqnarray*}
where $p \in {\bf k}$ or $p \in A$.
\end{lem}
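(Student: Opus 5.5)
We verify the two Novikov identities \eqref{Novikov1} and \eqref{Novikov2} for $\circ_p$ directly, expanding each side into pieces of degree $0$, $1$ and $2$ in $p$. The case $p\in{\bf k}$ reduces to the case $p\in A$ with $p\cdot x$ replaced by $px$: every step below only uses that multiplication by $p$ commutes and associates with $\cdot$. So we treat $p\in A$, writing $P=L_\cdot(p)$ and $p\cdot x\cdot y$ for the associative commutative product.

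\emph{First identity.} Expanding gives
\[
(x\circ_p y)\circ_p z=(x\circ y)\circ z+(p\cdot x\cdot y)\circ z+p\cdot(x\circ y)\cdot z+p\cdot p\cdot x\cdot y\cdot z .
\]
We compare this with the same expression with $y$ and $z$ swapped.
\begin{itemize}
\item The degree $0$ term is symmetric in $y,z$ by \eqref{Novikov1}.
\item The degree $2$ term is symmetric by commutativity and associativity of $\cdot$.
\item The term $(p\cdot x\cdot y)\circ z=((p\cdot x)\cdot y)\circ z$ is symmetric in $y,z$ by \eqref{transNP1}.
\item The term $p\cdot(x\circ y)\cdot z=p\cdot\big((x\circ y)\cdot z\big)$ is symmetric by \eqref{Tid1}.
\end{itemize}
Hence \eqref{Novikov1} holds for $\circ_p$.

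\emph{Second identity.} We must show that the associator $(x\circ_p y)\circ_p z-x\circ_p(y\circ_p z)$ is symmetric in $x,y$. Expanding,
\[
x\circ_p(y\circ_p z)=x\circ(y\circ z)+x\circ(p\cdot y\cdot z)+p\cdot x\cdot(y\circ z)+p\cdot p\cdot x\cdot y\cdot z .
\]
The degree $0$ part of the associator is the ordinary associator, which is symmetric by \eqref{Novikov2}. The degree $2$ parts cancel. The linear-in-$p$ part is
\[
(p\cdot x\cdot y)\circ z+p\cdot(x\circ y)\cdot z-x\circ(p\cdot y\cdot z)-p\cdot x\cdot(y\circ z).
\]
The first term is already symmetric in $x,y$, so it remains to show that
\[
E(x,y)=p\cdot z\cdot(x\circ y)-p\cdot x\cdot(y\circ z)-x\circ(p\cdot y\cdot z)
\]
is symmetric in $x,y$.

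\emph{Key step.} Apply \eqref{transNP2} with multiplier $w=p\cdot z$. This gives
\[
2\,p\cdot z\cdot(x\circ y)=(p\cdot z\cdot x)\circ y+x\circ(p\cdot z\cdot y).
\]
By \eqref{transNP1}, $(p\cdot z\cdot x)\circ y=(p\cdot x\cdot y)\circ z$, which is symmetric in $x,y$. Also $p\cdot x\cdot(y\circ z)=p\cdot\big(x\cdot(y\circ z)\big)$, and \eqref{transNP2} with multiplier $x$ gives
\[
2\,x\cdot(y\circ z)=(x\cdot y)\circ z+y\circ(x\cdot z).
\]
To keep the factor $p$ inside, we should instead use multiplier $p\cdot x$:
\[
2\,p\cdot x\cdot(y\circ z)=(p\cdot x\cdot y)\circ z+y\circ(p\cdot x\cdot z).
\]
Substituting both formulas,
\[
2E(x,y)=(p\cdot x\cdot y)\circ z+x\circ(p\cdot y\cdot z)-(p\cdot x\cdot y)\circ z-y\circ(p\cdot x\cdot z)-2\,x\circ(p\cdot y\cdot z).
\]
This simplifies to $2E(x,y)=-x\circ(p\cdot y\cdot z)-y\circ(p\cdot x\cdot z)$, which is symmetric in $x,y$. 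Since the characteristic is not $2$, $E(x,y)=E(y,x)$, and \eqref{Novikov2} follows.

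The main obstacle is the bookkeeping in this mixed linear term: one has to choose the right multiplier ($p\cdot z$, respectively $p\cdot x$) in \eqref{transNP2} so that all factors of $p$ are absorbed. The rest is routine.
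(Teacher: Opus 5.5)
Your verification is correct and is essentially the direct check the paper has in mind when it says ``It is straightforward''. You split each Novikov identity for $\circ_p$ into its parts of degree $0$, $1$, $2$ in $p$, and you dispose of the linear part via \eqref{transNP1}, \eqref{Tid1}, and \eqref{transNP2} with multipliers $p\cdot z$ and $p\cdot x$; the aside through multiplier $x$ is unnecessary and can be dropped.
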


\begin{proof}
It is straightforward.
\delete{The Eq.~\eqref{G1} holds since
\begin{eqnarray*}
&& a \cdot (b \circ c) - (a \circ b) \cdot c + (b \circ a) \cdot c - b \cdot (a \circ c) + a \circ (b c) - (ab) \circ c + (ba) \circ a - b \circ (ac) \\
&&= \frac{1}{2}(a \cdot b) \circ c + \frac{1}{2} b \circ (a \cdot c) - \frac{1}{2}(ac) \circ b - \frac{1}{2} a \circ (bc) + \frac{1}{2}(bc) \circ a\\
&&+ \frac{1}{2} b \circ (ac)- \frac{1}{2}(ab) \circ c - \frac{1}{2} a \circ (bc) - b \circ (ac) + a \circ (bc) \\
&&= 0.
\end{eqnarray*}
Similar reason apply to the proofs of Eqs.~\eqref{G2} -~\eqref{G4}.\vspb}
\end{proof}
\begin{rmk}
Note that $(A, \circ_{p})$ is a deformation of $(A,\circ)$.
\end{rmk}

\begin{thm}\label{global deformation thm}
Let $(A,\cdot,\circ)$ be a {\transNovikovpoisson} algebra. Then for any fixed elements $p$, $q\in A$ or ${\bf k}$, the triple $(A,\cdot_p,\circ_q)$ is a {\transNovikovpoisson algebra}, where
\begin{eqnarray*}
x\cdot_p y=p\cdot x\cdot y,\;\; x\circ_qy=x\circ y+q\cdot x\cdot y\;\;\;\text{for all $x,y\in A$}.
\end{eqnarray*}
\end{thm}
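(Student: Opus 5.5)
We plan to prove this in two stages: first the mutation $\cdot_p$ alone, then the deformation $\circ_q$ alone. The general case follows by composing the two constructions.

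\emph{Stage 1: $(A,\cdot_p,\circ)$ is a transposed Novikov-Poisson algebra.} Commutativity and associativity of $\cdot_p$ are clear, since $x\cdot_p y=p\cdot x\cdot y$ and $\cdot$ is commutative and associative. For Eq.~\eqref{transNP1} we write $(x\cdot_p y)\circ z=((p\cdot x)\cdot y)\circ z$ and apply Eq.~\eqref{transNP1} with $p\cdot x$ in place of $x$. For Eq.~\eqref{transNP2} we apply the original identity with $z$ replaced by $z\cdot p$:
\[
2(z\cdot p)\cdot(x\circ y)=((z\cdot p)\cdot x)\circ y+x\circ((z\cdot p)\cdot y).
\]
This is exactly $2z\cdot_p(x\circ y)=(z\cdot_p x)\circ y+x\circ(z\cdot_p y)$. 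When $p\in{\bf k}$, everything is a scalar multiple and the same computation works.

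\emph{Stage 2: $(A,\cdot,\circ_q)$ is a transposed Novikov-Poisson algebra.} The preceding Lemma already gives that $(A,\circ_q)$ is Novikov. If one prefers to verify it directly, the cross terms are as follows. Eq.~\eqref{Novikov1} for $\circ_q$ reduces to $(q\cdot x\cdot y)\circ z+q\cdot(x\circ y)\cdot z$ being symmetric in $y,z$. The first term is symmetric by Eq.~\eqref{transNP1}, and the second by Eq.~\eqref{Tid1}. Eq.~\eqref{Novikov2} reduces, after cancelling the symmetric $q^2$ terms, to an identity among $q\cdot((x\circ y)-(y\circ x))\cdot z$, $(q\cdot x\cdot y)\circ z$ and $x\circ(q\cdot y\cdot z)$, which follows from Eq.~\eqref{transNP2} with $q\cdot z$ or $q$ as the multiplier.

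The compatibilities of $\cdot$ with $\circ_q$ are checked separately. Eq.~\eqref{transNP1} becomes $(x\cdot y)\circ z+q\cdot x\cdot y\cdot z$, which is symmetric in $y,z$. Eq.~\eqref{transNP2} splits into the original identity plus the term $2z\cdot q\cdot x\cdot y=q\cdot z\cdot x\cdot y+q\cdot x\cdot z\cdot y$, which holds trivially.

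\emph{Combining.} We apply Stage 2 to the transposed Novikov-Poisson algebra $(A,\cdot_p,\circ)$ obtained in Stage 1. Its deformation uses $x\circ y+q\cdot_p x\cdot_p y$, which is not literally $\circ_q$, so this route does not directly give the statement. Instead, we redo Stage 2 with $\cdot_p$ as the commutative product and the deformation term $q\cdot x\cdot y$ computed with the original product $\cdot$. Every identity used only needs the original structure $(A,\cdot,\circ)$ and associativity: each check reduces to Eqs.~\eqref{transNP1}, \eqref{transNP2} and \eqref{Tid1} applied with multipliers $p\cdot z$ or $q\cdot z$, and all purely $\cdot$-terms cancel by commutativity.

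The main obstacle we expect is Eq.~\eqref{Novikov2} for $\circ_q$. Its cross terms must be grouped carefully so that Eq.~\eqref{transNP2} (in the form $x\circ(q\cdot y\cdot z)=2q\cdot z\cdot(x\circ y)-(q\cdot z\cdot x)\circ y$) makes them cancel pairwise against their counterparts with $x,y$ swapped, using Eq.~\eqref{Tid1}.
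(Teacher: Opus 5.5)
Your proposal is correct and is essentially the direct verification the paper intends (its proof is only ``It can be checked directly''); the Novikov property of $\circ_q$ comes from the preceding lemma or from your computation with Eqs.~\eqref{transNP1}, \eqref{transNP2} and \eqref{Tid1}, and the compatibilities for the pair $(\cdot_p,\circ_q)$ reduce to Eq.~\eqref{transNP1} with $p\cdot x$ in place of $x$ and Eq.~\eqref{transNP2} with multiplier $p\cdot z$, as you say. Your combining step can skip the recomputation if you compose in the other order: Stage~1 applied to the transposed Novikov-Poisson algebra $(A,\cdot,\circ_q)$ from Stage~2 keeps $\circ_q$ fixed and builds $\cdot_p$ from the original product $\cdot$, so it yields exactly $(A,\cdot_p,\circ_q)$.
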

\begin{proof}
It can be checked directly.
\end{proof}

Next, we present a construction of {\transNovikovpoisson algebras} using Kantor products.

We first recall the definition of Kantor product of two binary operations. For the details, one can refer to \cite{Kantor1, Kantor2}.
\delete{The study of a specific class of algebras gives rise to the concept known as the Kantor product of multiplications. Originally introduced by Kantor in 1972, the family of conservative algebras~\cite{Kantor1} encompasses several fundamental classes of non-associative algebras. \par
Here we give only a brief overview of the Kantor product; see \cite{Kantor2}, for further details. }
Let \( V_n \) be a \( n \)-dimensional vector space, and denote by \( U(n) \) the space of all binary operations on \( V_n \). For a fixed element \( u \in V_n \) and any two binary operations \( A, B \in U(n) \), we define a new binary operation \( [A, B]_u \) on \( V_n \) by setting, for all \( x, y \in V_n \),
\begin{eqnarray*}
&&[A, B]_u(x, y) = A(u, B(x, y)) - B(A(u, x), y) - B(x, A(u, y)).
\end{eqnarray*}
$[A, B]_u$ is called the {\bf left Kantor product} of \( A \) and \( B \).\delete{ a corresponding right Kantor product can also be defined. Note that all products considered in this subsection are left Kantor products.}

\begin{pro}
Let $(A,\cdot, \circ)$ be a {\transNovikovpoisson} algebra and $u$ be a fixed element in $A$. Then $(A, [\cdot, \circ]_u)$ is a Novikov algebra and $(A,[\circ, \cdot]_u)$ is an associative commutative algebra.
\end{pro}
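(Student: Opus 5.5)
The plan is to first rewrite both Kantor products in closed form using the axioms \eqref{transNP1}, \eqref{transNP2} and the identity \eqref{Tid1}, and then verify the required identities on these simpler expressions.

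\emph{The product $[\circ,\cdot]_u$.} By definition,
\[
[\circ,\cdot]_u(x,y)=u\circ(x\cdot y)-(u\circ x)\cdot y-x\cdot(u\circ y).
\]
By \eqref{Tid1} we have $(u\circ x)\cdot y=(u\circ y)\cdot x$. By \eqref{transNP2}, $2x\cdot(u\circ y)=(x\cdot u)\circ y+u\circ(x\cdot y)$. Substituting, the terms $u\circ(x\cdot y)$ cancel and we get
\[
[\circ,\cdot]_u(x,y)=-(x\cdot u)\circ y=-(x\cdot y)\circ u,
\]
where the last step is \eqref{transNP1} applied with $(a,b,c)=(x,u,y)$. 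Thus $x\star y=-(x\cdot y)\circ u$, which is visibly commutative.

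For associativity, $(x\star y)\star z=\big(((x\cdot y)\circ u)\cdot z\big)\circ u$. By \eqref{Tid1}, $((x\cdot y)\circ u)\cdot z=((x\cdot y)\circ z)\cdot u$. The element $(x\cdot y)\circ z$ is symmetric in $y,z$ by \eqref{transNP1}, and symmetric in $x,y$ by commutativity of $\cdot$, hence totally symmetric in $x,y,z$. Therefore $(x\star y)\star z$ is invariant under permutations of $x,y,z$. In particular it equals $(y\star z)\star x=x\star(y\star z)$, which is associativity.

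\emph{The product $[\cdot,\circ]_u$.} By definition,
\[
[\cdot,\circ]_u(x,y)=u\cdot(x\circ y)-(u\cdot x)\circ y-x\circ(u\cdot y)=-u\cdot(x\circ y),
\]
again by \eqref{transNP2}. So $x*y=-u\cdot(x\circ y)$, and by \eqref{Tid1} this also equals $-(x\circ u)\cdot y$. It remains to check \eqref{Novikov1} and \eqref{Novikov2} for $*$.

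For \eqref{Novikov1}, I would write $(x*y)*z=u\cdot\big((u\cdot(x\circ y))\circ z\big)$. First apply \eqref{transNP1} to move $z$ inside: $(u\cdot(x\circ y))\circ z=(u\cdot z)\circ(x\circ y)$. Then use \eqref{Tid1} and \eqref{Tid2} to exchange the roles of $y$ and $z$. The aim is a normal form in which the symmetry in $y,z$ is evident.

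For \eqref{Novikov2}, I would expand $x*(y*z)=u\cdot\big(x\circ(u\cdot(y\circ z))\big)$ via \eqref{transNP2}, which splits $x\circ(u\cdot w)$ into $2u\cdot(x\circ w)-(u\cdot x)\circ w$. Then I would antisymmetrize in $x,y$ and compare with $(x*y)*z-(y*x)*z$. The resulting terms should match after applying \eqref{Novikov2} for $\circ$ together with \eqref{Tid3} (and \eqref{Tid4} when available).

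The main obstacle is this last bookkeeping step for \eqref{Novikov2}. The two $u$-factors occur in different positions, and choosing a common normal form, namely pushing every $u\cdot$ to the outermost position via \eqref{Tid1} and \eqref{transNP2}, is the step I expect to require care.
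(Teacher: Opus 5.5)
Your proposal is correct and follows essentially the same route as the paper: the same closed forms $[\circ,\cdot]_u(x,y)=-(x\cdot y)\circ u$ and $[\cdot,\circ]_u(x,y)=-u\cdot(x\circ y)$, total symmetry of $(x\cdot y)\circ z$ for associativity, and an \eqref{transNP2}-expansion combined with \eqref{Novikov2} and \eqref{Tid3} for the left-symmetric identity. To close your sketched steps: \eqref{Novikov1} follows directly from $(u\cdot(x\circ y))\circ z=((x\circ y)\cdot z)\circ u=((x\circ z)\cdot y)\circ u$ by \eqref{transNP1} and \eqref{Tid1}, so \eqref{Tid2} is not needed; and in \eqref{Novikov2} your antisymmetrized terms collapse to $2u\cdot(w\circ z)-w\circ(u\cdot z)=(u\cdot w)\circ z$ with $w=x\circ y-y\circ x$, so the bracket vanishes even before the outer $u\cdot$ is applied and \eqref{Tid4} (which needs characteristic $\neq 3$) is never required.
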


\begin{proof}
Firstly, for all $x$, $y\in A$, define
\begin{eqnarray*}
&&x * y:=[\circ,\cdot]_u(x,y),\;\;\; x \star y:=[\cdot, \circ]_u(x,y).
\end{eqnarray*}
Then we obtain
\begin{eqnarray*}
x * y &=&u \circ (x\cdot y) - (u \circ x)\cdot y - x\cdot (u \circ y) \\
&= &u \circ (x\cdot y) - 2x\cdot(u \circ y) = -(x\cdot u) \circ y = -(x\cdot y) \circ u,
\end{eqnarray*}
and
\begin{eqnarray*}
x \star y&=& u\cdot(x \circ y) - (u\cdot x) \circ y - x \circ (u\cdot y) = -u\cdot(x \circ y).
\end{eqnarray*}

For all $x,y,z\in A$, we have
\begin{eqnarray*}
(x * y) * z&=&(((x\cdot y)\circ u)\cdot z)\circ u=(((x\cdot y) \circ z)\cdot u)\circ u\\
&=&(((z\cdot y) \circ x)\cdot u)\circ u=(x\cdot ((y\cdot z) \circ u))\circ u=x * (y * z),
\end{eqnarray*}
and
\begin{eqnarray*}
x * y&=& -(x\cdot y) \circ u=y*x.
\end{eqnarray*}
Therefore, $(A, \ast)$ is a commutative associative algebra.

\delete{Secondly, define \vspa
\begin{eqnarray*}
&&x \diamond y:=[\cdot, \circ](x,y)= u(x \circ y) - (ux) \circ y - x \circ (uy) = -u(x \circ y).
\vspb
\end{eqnarray*}}
Since
\begin{eqnarray*}
&&(x \star y) \star z= - (u\cdot(x \circ y)) \star z = u\cdot((u\cdot(x \circ y)) \circ z) = u\cdot(((x \circ y) \cdot z) \circ u)=(x \star z) \star y,\\
&&y\star (x \star z) = y \star(-u\cdot(x \circ z)) = u\cdot(y \circ (u\cdot(x \circ z))),
\end{eqnarray*}
 Eq.~\eqref{Novikov1} holds. Moreover, we obtain
\begin{eqnarray*}
&&(x \star y) \star z-(y \star x) \star z-x\star (y \star z)+y\star (x \star z)\\
&&\quad=u\cdot\big((u\cdot(x \circ y)) \circ z - ((y \circ x)\cdot u) \circ z - x \circ (u\cdot(y \circ z)) + y \circ (u\cdot(x \circ z))\big)\\
&&\quad= u\cdot\big(\frac{1}{2}((u\cdot x) \circ y) \circ z + \frac{1}{2}(x \circ (u\cdot y)) \circ z - \frac{1}{2}((u\cdot y) \circ x) \circ z - \frac{1}{2}(y \circ (u\cdot x)) \circ z \\
&&\qquad- \frac{1}{2}x \circ ((u\cdot y) \circ z) - \frac{1}{2}x \circ ((y\cdot u) \circ z) - \frac{1}{2}y \circ ((u\cdot x) \circ z) + \frac{1}{2}y \circ ((x\cdot u) \circ z)\big) \\
&&\quad= 0.
\end{eqnarray*}
Therefore, $(A, \star)$ is a Novikov algebra.
\end{proof}
%The following corollaries provide methods to construct new {\transNovikovpoisson} from known ones.\vspb
\begin{thm}\label{kantorcor}
Let $(A, \cdot, \circ)$ be a {\transNovikovpoisson algebra} and $u$ be a fixed element in $A$. Then $(A, \cdot, [\cdot,\circ]_u)$ is also a {\transNovikovpoisson algebra}.
\end{thm}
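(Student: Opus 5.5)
From the preceding proposition, $x\star y:=[\cdot,\circ]_u(x,y)=-u\cdot(x\circ y)$, and $(A,\star)$ is already known to be a Novikov algebra. Since $(A,\cdot)$ is unchanged, only the two compatibility conditions \eqref{transNP1} and \eqref{transNP2} need checking for the pair $(\cdot,\star)$. The plan is to reduce each to the corresponding identity for $(\cdot,\circ)$, using that $\star$ is just $\circ$ followed by multiplication by $-u$. The available tools are commutativity and associativity of $\cdot$, the defining identities \eqref{transNP1} and \eqref{transNP2}, and the derived identity \eqref{Tid1}, $(x\circ y)\cdot z=(x\circ z)\cdot y$.

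For \eqref{transNP1}, write $(x\cdot y)\star z=-u\cdot((x\cdot y)\circ z)$. By \eqref{transNP1} for $\circ$ this equals $-u\cdot((x\cdot z)\circ y)=(x\cdot z)\star y$, which settles it immediately.

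For \eqref{transNP2}, the left side is $2z\cdot(x\star y)=-u\cdot\big(2z\cdot(x\circ y)\big)$, which by \eqref{transNP2} equals $-u\cdot\big((z\cdot x)\circ y+x\circ(z\cdot y)\big)$. The right side is $(z\cdot x)\star y+x\star(z\cdot y)=-u\cdot\big((z\cdot x)\circ y\big)-u\cdot\big(x\circ(z\cdot y)\big)$. These coincide by distributivity of $\cdot$, so no use of \eqref{Tid1} is needed here.

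Neither step presents a real obstacle; the only point requiring care is the sign and exact form of $[\cdot,\circ]_u$. If I mistakenly identified the operation as $[\circ,\cdot]_u$, which equals $-(x\cdot y)\circ u$, then \eqref{transNP2} would require moving $z$ across $\circ u$, and I would handle that with \eqref{transNP1} and \eqref{Tid1}. With the correct operation, $[\cdot,\circ]_u=-L_\cdot(u)\circ(\text{original product})$, and both checks are one line each given the proposition that $(A,\star)$ is Novikov.
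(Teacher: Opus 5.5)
Your proposal is correct and matches the paper's proof: both take $x\star y=-u\cdot(x\circ y)$ and the Novikov property of $(A,\star)$ from the preceding proposition, then check Eqs.~\eqref{transNP1} and~\eqref{transNP2} for $(\cdot,\star)$ by pulling out the factor $-u\cdot$ and applying the same identities for $(\cdot,\circ)$. Your closing digression about $[\circ,\cdot]_u$ is unnecessary but harmless.
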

\begin{proof}
Denote $[\cdot,\circ]_u$ by $\star$. By the definition of {\transNovikovpoisson algebras}, we obtain
\begin{eqnarray*}
 2z\cdot (x \star y) &=& 2z\cdot (-u\cdot(x \circ y)) = -2z\cdot u\cdot (x \circ y) \\
 &=& -u\cdot ((z\cdot x) \circ y) - u\cdot (x \circ (z\cdot y)) = (z\cdot x) \star y + x \star (z\cdot y),
 \end{eqnarray*}
 and
 \begin{eqnarray*}
 &&(x\cdot y)\star z = -u((x\cdot y) \circ z) = -u((x\cdot z) \circ y) = (x\cdot z) \star y.
\end{eqnarray*}
Therefore, $(A, \cdot, [\cdot,\circ]_u)$ is a transposed Novikov-Poisson algebra.
\end{proof}

%By Theorem~\ref{global deformation thm} and corollary~\ref{kantorcor}, we have the following corollary directly. \vspb
\begin{cor}
Let $(A,\cdot,\circ)$ be a {\transNovikovpoisson algebra}. For fixed elements $p,q,r\in {\bf k}$ or $A$, the triple $(A,\cdot_p,\circ_q)$ is also a {\transNovikovpoisson algebra}, where $x\cdot_py=x\cdot y\cdot p$, $x\circ_{q,r} y=q\cdot(x\circ y)+r\cdot x\cdot y$, for all $x, y\in A$.
\end{cor}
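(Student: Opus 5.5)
The corollary has a typo: the conclusion should be about the triple $(A,\cdot_p,\circ_{q,r})$. I will combine Theorem~\ref{kantorcor} with Theorem~\ref{global deformation thm}, treating the three modifications of the structure one at a time.

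\emph{Step 1.} Apply Theorem~\ref{kantorcor} with $u=-q$ when $q\in A$. By the computation in the proof preceding Theorem~\ref{kantorcor}, $[\cdot,\circ]_u(x,y)=-u\cdot(x\circ y)$. So $(A,\cdot,\star)$ with $x\star y=q\cdot(x\circ y)$ is a transposed Novikov-Poisson algebra.

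When $q\in\bfk$, the product $x\star y=q(x\circ y)$ is just a scalar multiple of $\circ$. All defining identities \eqref{Novikov1}, \eqref{Novikov2}, \eqref{transNP1}, \eqref{transNP2} are homogeneous in $\circ$: each is quadratic in $\circ$ or linear in $\circ$ with $\cdot$ fixed. Hence $(A,\cdot,q\circ)$ is again a transposed Novikov-Poisson algebra.

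\emph{Step 2.} Apply Theorem~\ref{global deformation thm} to the algebra $(A,\cdot,\star)$ from Step 1, with parameters $p$ and $r$. This gives that $(A,\cdot_p,\star_r)$ is a transposed Novikov-Poisson algebra, where
\[
x\cdot_p y=p\cdot x\cdot y,\qquad x\star_r y=x\star y+r\cdot x\cdot y=q\cdot(x\circ y)+r\cdot x\cdot y=x\circ_{q,r}y .
\]
This is exactly the claimed structure, and the argument covers every combination of $p,q,r$ lying in $\bfk$ or in $A$.

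The only point needing care is that Theorem~\ref{global deformation thm} deforms $\circ$ using the original product $\cdot$, not $\cdot_p$. Since Step 2 applies that theorem directly to $(A,\cdot,\star)$, the term $r\cdot x\cdot y$ is taken in the original $\cdot$, which matches the statement. No further obstacle is expected. Theorem~\ref{global deformation thm} was only asserted ("checked directly"), but we are allowed to assume it.
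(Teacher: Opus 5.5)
Your proof is correct and follows the paper's own argument, which simply cites Theorems~\ref{global deformation thm} and~\ref{kantorcor}. You make explicit the order that produces exactly the stated product (Kantor product with $u=-q$ first, then the deformation with $p,r$), and you separately treat $q\in\bfk$, which Theorem~\ref{kantorcor} (requiring $u\in A$) does not cover; you are also right that the conclusion should read $(A,\cdot_p,\circ_{q,r})$.
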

\begin{proof}
It follows directly by Theorems~\ref{global deformation thm} and \ref{kantorcor}.
\end{proof}

\section{$\frac{1}{2}$-derivations of Novikov algebras and {\transNovikovpoisson algebra} structures }

In this section, we present a relation between $\frac{1}{2}$-derivations of the associated Novikov algebras and transposed Novikov-Poisson algebras and investigate transposed Novikov-Poisson algebra structures on solvable Novikov algebras.
%{\transNovikovpoisson algebra} has characteristic $p\neq2$.
\vspace{-.2cm}

\subsection{$\frac{1}{2}$-derivations of Novikov algebras}
\label{1/2}
We start with the definition of $\frac{1}{2}$-derivations which is a particular case of $\delta$-derivations.
\vspace{-.1cm}
\begin{defi}\label{derivation}
Let $(A, \circ)$ be a Novikov algebra, $\varphi: A\rightarrow A$ be a linear map and $\delta\in {\bf k}$. Then $\varphi$ is a {\bf $\delta$-derivation} of $(A, \circ)$ if it satisfies
\begin{eqnarray}
\varphi(x \circ y) = \delta(\varphi(x) \circ y + x \circ \varphi(y))\;\;\;\;\text{for all}\: x, y \in A.
\end{eqnarray}
\end{defi}

\begin{rmk}
Let $\alpha\in {\bf k}$. Define $\varphi_\alpha: A\rightarrow A$ by
\begin{eqnarray}
\varphi_\alpha(x)=\alpha x\;\;\;\text{for all $x\in A$.}
\end{eqnarray}
Obviously, $\varphi_\alpha$ is a $\frac{1}{2}$-derivation of $(A,\circ)$. We call such  $\frac{1}{2}$-derivations of $(A,\circ)$ {\bf trivial}.
\delete{For an algebra $(A,\circ)$, the main examples of $\frac{1}{2}$-derivations of $(A,\circ)$ are the scalar multiplication maps by elements of the ground field. We call such $\frac{1}{2}$-derivations as trivial $\frac{1}{2}$-derivations. For a {\transNovikovpoisson algebra}, the Lemma above provides some natural examples of $\frac{1}{2}$-derivations.
\vspa}
\end{rmk}

Transposed Novikov-Poisson algebras are related with $\frac{1}{2}$-derivations of Novikov algebras.
\begin{lem}\label{1/2derivationintheT}
Let $(A, \cdot, \circ)$ be a {\transNovikovpoisson algebra} and $z$ be an arbitrary element in $A$. Then $L_\cdot(z)$ (or $R_\cdot(z)$) is a $\frac{1}{2}$-derivation of the Novikov algebra $(A, \circ)$.
\end{lem}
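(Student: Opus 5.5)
The plan is to verify the defining identity of a $\frac{1}{2}$-derivation directly from the compatibility condition Eq.~\eqref{transNP2}. No structural input beyond the definition is needed. Fix $z\in A$ and set $\varphi=L_\cdot(z)$, so $\varphi(x)=z\cdot x$. By Definition~\ref{derivation} with $\delta=\frac{1}{2}$, we must show
\begin{eqnarray*}
z\cdot(x\circ y)=\tfrac{1}{2}\big((z\cdot x)\circ y+x\circ(z\cdot y)\big)\;\;\;\text{for all $x$, $y\in A$.}
\end{eqnarray*}
This is exactly Eq.~\eqref{transNP2} divided by $2$. The division is legitimate because the characteristic of ${\bf k}$ is assumed throughout to be different from $2$.

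For the parenthetical claim about $R_\cdot(z)$, we use that $(A,\cdot)$ is commutative. This gives $R_\cdot(z)(x)=x\cdot z=z\cdot x=L_\cdot(z)(x)$, so $R_\cdot(z)=L_\cdot(z)$ and the same verification applies verbatim.

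There is no real obstacle here. The only points to state explicitly are the use of $\operatorname{char}{\bf k}\neq 2$ to divide by $2$, and the commutativity of $\cdot$ to identify left and right multiplication. Notably, Eq.~\eqref{transNP1} and the Novikov identities Eqs.~\eqref{Novikov1} and~\eqref{Novikov2} are not needed for this lemma.
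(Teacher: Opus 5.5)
Your proof is correct and is the same argument as the paper's, which states only that the lemma follows immediately from Eq.~\eqref{transNP2}. You make explicit the two details the paper leaves implicit: multiplying Eq.~\eqref{transNP2} by $\frac{1}{2}$ (valid since $\operatorname{char}{\bf k}\neq 2$), and $R_\cdot(z)=L_\cdot(z)$ by commutativity of $\cdot$.
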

\begin{proof}
It follows immediately from Eq.~\eqref{transNP2}.
\end{proof}

By Lemma \ref{1/2derivationintheT}, \delete{With the relation between $\frac{1}{2}$-derivations of Lie algebras and transposed Poisson algebras established, it allows one to consequently construct certain transposed Poisson structures on corresponding Lie algebras. Similarly,} we can use $\frac{1}{2}$-derivations of Novikov algebras to construct {\transNovikovpoisson structures}. \delete{For a Novikov algebra of dimension one, its {\transNovikovpoisson structures} are completely determined. Hence, we exclude the one-dimensional cases from our consideration.
\vspa}

\begin{ex}
Let $A$ be a vector space over ${\bf k}$ with a basis $\mathcal{B} = \{ e_i \mid i \in \mathbb{Z} \}$. Define a binary operation $\circ:{A} \otimes {A} \to {A}$ such that $e_i \circ e_j = e_{i+j}$, for any $e_i, e_j \in \mathcal{B}$. Then $(A,\circ)$ is a Novikov algebra. Let $\varphi$ be a $\frac{1}{2}$-derivation of $(A,\circ)$. \delete{Since
\begin{eqnarray*}
 && 2\varphi(e_i) = 2\varphi(e_i\circ e_0) = \varphi(e_i)\circ e_0 + e_i\circ\varphi(e_0)= \varphi(e_i) + e_i\circ\varphi(e_0),
\end{eqnarray*}
we have $\varphi(e_i)= e_i\circ\varphi(e_0)$. Let $\cdot $ be a binary operation on $A$. Let $L_\cdot (e_j)(e_i)=e_i\circ\varphi(e_0)$

Let $(A, \cdot, \circ)$ be a {\transNovikovpoisson algebra}  with the Novikov algebra $(A, \circ)$ given above. By Proposition \ref{1/2derivationintheT}, we have $L_\cdot (e_i)=e_i\circ\varphi(e_0)$. }
Set $\varphi(e_i) = \sum_{j \in \mathbb{Z}} a_{i,j} e_j$ for each $e_i\in \mathcal{B}$. Since
\begin{eqnarray*}
 &&2\sum_{j \in \mathbb{Z}} a_{i,j} e_j = 2\varphi(e_i) = 2\varphi(e_i\circ e_0) = \varphi(e_i)\circ e_0 + e_i\circ\varphi(e_0)= \sum_{j \in \mathbb{Z}} a_{i,j} e_j + \sum_{j \in \mathbb{Z}} a_{0,j} e_{i+j},
\end{eqnarray*}
 we have $\varphi(e_i)= \sum_{j \in \mathbb{Z}} a_{0,j} e_{i+j}$ for each $i\in\mathbb{Z}$. Then there is a finite set $\{a_i\}_{i \in \mathbb{Z}}$ of elements from ${\bf k}$, such that $\varphi(e_i) = \sum_{j \in \mathbb{Z}} a_j e_{i+j}$. Then we consider the compatible commutative associative algebra structure $(A,\cdot)$ on $(A,\circ)$.
 Denote $L_\cdot(e_n)$ by $L_n$. By Lemma ~\ref{1/2derivationintheT}, there exist a finite set $\{a_{i,k}\}_{k \in \mathbb{Z}}$ of elements from ${\bf k}$ for each $i$, such that $L_j(e_i) =\sum_{k \in \mathbb{Z}} a_{j,k} e_{i+k}$. Note that
  $\sum_{k \in \mathbb{Z}} a_{j,k} e_{i+k} = L_j(e_i) = e_j \cdot e_i=e_i \cdot e_j = L_i(e_j) = \sum_{k \in \mathbb{Z}} a_{i,k} e_{k+j}$. Let $j=0$. Then we have $a_{i,k}=a_{0,k-i}$ for all $i,k\in \mathbb{Z}$. Thus we obtain $e_i \cdot e_j = \sum_{k \in \mathbb{Z}} a_{i,k} e_{k+j} = \sum_{k \in \mathbb{Z}} a_{0,k-i} e_{k+j}=  \sum_{k \in \mathbb{Z}} a_{0,k} e_{k+i+j}$.
For every finite set $\{a_l\}_{l\in \mathbb{Z}}$ of elements from ${\bf k}$, we define $e_i\cdot e_j= \sum_{k \in \mathbb{Z}} a_k e_{k+i+j}$. It is direct to check that triple $(A,\cdot,\circ)$ is a {\transNovikovpoisson algebra}.
\end{ex}

\begin{thm}\label{1/2-trans}
Let $(A,\circ)$ be a Novikov algebra with $\operatorname{dim}(A)\geq 2$. If there exist only trivial $\frac{1}{2}$-derivations on $(A,\circ)$, then every {\transNovikovpoisson} structure on $(A,\circ)$ is trivial.
\end{thm}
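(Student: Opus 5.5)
By Lemma \ref{1/2derivationintheT}, for every $z\in A$ the map $L_\cdot(z)$ is a $\frac{1}{2}$-derivation of $(A,\circ)$. The hypothesis then gives, for each $z$, a scalar $\alpha(z)\in{\bf k}$ with $z\cdot x=\alpha(z)x$ for all $x\in A$. The plan is to show that either the product $\cdot$ vanishes or the product $\circ$ vanishes, which is exactly what triviality means.

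First, $\alpha:A\to{\bf k}$ is linear, since $z\mapsto L_\cdot(z)$ is linear and $x\mapsto \lambda x$ is the zero map only for $\lambda=0$ (as $A\neq 0$). Next we use commutativity. For any $x,z\in A$ we have
\begin{eqnarray*}
\alpha(z)x=z\cdot x=x\cdot z=\alpha(x)z.
\end{eqnarray*}
Suppose $\alpha\neq0$ and pick $z$ with $\alpha(z)\neq 0$. Since $\dim A\geq 2$, we can choose $x$ linearly independent from $z$. The identity above then forces $\alpha(z)=0$, a contradiction. Hence $\alpha=0$, so $z\cdot x=0$ for all $x,z$, and $(A,\cdot)$ is the trivial commutative associative algebra.

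Thus $(A,\cdot,\circ)$ is a Novikov algebra equipped with the zero commutative associative product, which is a trivial \transNovikovpoisson structure by definition. The only delicate point is the use of $\dim A\geq 2$. In dimension $1$ the scalar-multiple argument gives no contradiction, and indeed Example \ref{ex-dim-1} exhibits non-trivial structures there. Consequently, whether one obtains the conclusion "the product $\cdot$ vanishes" rather than "the product $\circ$ vanishes" depends only on this linear-independence step, which I expect to be routine.
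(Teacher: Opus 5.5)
Your proof is correct and is essentially the paper's argument: the paper likewise uses Lemma~\ref{1/2derivationintheT} to write $L_\cdot(x)=\kappa_x\,\mathrm{id}$ and $R_\cdot(y)=\kappa_y\,\mathrm{id}$. It then compares $\kappa_x y=x\cdot y=\kappa_y x$ for linearly independent $x,y$ (available since $\dim(A)\geq 2$) to get $\kappa_x=\kappa_y=0$, which is exactly your commutativity step. Your closing remark contrasting the vanishing of $\cdot$ with that of $\circ$ is superfluous, since the argument always forces $\cdot$ to vanish, independently of $\circ$.
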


\begin{proof}
Let  $(A,\cdot,\circ)$ be a {\transNovikovpoisson algebra}. By Lemma \ref{1/2derivationintheT},  $R_\cdot(y)$ and $L_\cdot(x)$ are $\frac{1}{2}$-derivations of $(A,\circ)$ for all $x,y\in A$. By the assumption, there exist \(\kappa_x, \kappa_y \in {\bf k}\), such that \(L_\cdot(x)(z) = \kappa_x z\) and \(R_\cdot(y)(z) = \kappa_y z\)  for all \(z \in A\). Since $\operatorname{dim}(A)\geq 2$, we can take \(x\) and \(y\) as linearly independent elements. It follows that \(\kappa_x = \kappa_y = 0\), since $L_\cdot(x)(y)= \kappa_x y=x\cdot y= \kappa_y x=R_\cdot(y)(x)$. Therefore, we have $x\cdot z=L_\cdot(x)(z)=\kappa_xz=0$ for all $x,z\in A$.
\end{proof}

\begin{lem}\label{1/2id}
Let $(A,  \circ)$ be a Novikov algebra and $\varphi$ be a $\frac{1}{2}$-derivation of $(A, \circ)$. Then the following identities hold for all $x$, $y$, $z \in A$:
\begin{eqnarray}
&\label{1/2id1}(x \circ y) \circ \varphi(z) = (x \circ \varphi(y)) \circ z,\\
&\label{1/2id2}(x \circ y) \circ \varphi(z) - (y \circ x) \circ \varphi(z) = \varphi(x) \circ (y \circ z) - \varphi(y) \circ (x \circ z).
\end{eqnarray}

\end{lem}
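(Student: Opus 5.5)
The plan is to derive both identities by applying $\varphi$ to the two Novikov identities \eqref{Novikov1} and \eqref{Novikov2}. I expand with the $\frac{1}{2}$-derivation rule twice, and then cancel terms using the Novikov identities with one argument replaced by $\varphi$ of an element. Since $\operatorname{char}{\bf k}\neq 2$, I may divide by $2$ throughout.

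\emph{Identity \eqref{1/2id1}.} I compute $2\varphi((x\circ y)\circ z)$ by applying the defining relation twice:
\[
2\varphi((x\circ y)\circ z)=\tfrac12(\varphi(x)\circ y)\circ z+\tfrac12(x\circ\varphi(y))\circ z+(x\circ y)\circ\varphi(z).
\]
Doing the same with $y,z$ swapped and using $(x\circ y)\circ z=(x\circ z)\circ y$ gives two expressions for the same element. Right-commutativity \eqref{Novikov1} gives
\[
(\varphi(x)\circ y)\circ z=(\varphi(x)\circ z)\circ y,\quad (x\circ\varphi(y))\circ z=(x\circ z)\circ\varphi(y),\quad (x\circ\varphi(z))\circ y=(x\circ y)\circ\varphi(z).
\]
After these rewrites the $\varphi(x)$-terms cancel, and the comparison reduces to
\[
\tfrac12(x\circ y)\circ\varphi(z)=\tfrac12(x\circ z)\circ\varphi(y).
\]
Hence $(x\circ y)\circ\varphi(z)=(x\circ z)\circ\varphi(y)=(x\circ\varphi(y))\circ z$, which is \eqref{1/2id1}.

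\emph{Identity \eqref{1/2id2}.} I apply $2\varphi$ to the zero element
\[
(x\circ y)\circ z-x\circ(y\circ z)-(y\circ x)\circ z+y\circ(x\circ z),
\]
which vanishes by \eqref{Novikov2}. Each term expands by applying the $\frac12$-derivation rule twice. For instance,
\[
2\varphi(x\circ(y\circ z))=\varphi(x)\circ(y\circ z)+\tfrac12x\circ(\varphi(y)\circ z)+\tfrac12x\circ(y\circ\varphi(z)).
\]
The resulting terms then fall into three groups.
\begin{enumerate}
\item Terms with $\varphi$ on $z$ at the outer level, $(x\circ y)\circ\varphi(z)-(y\circ x)\circ\varphi(z)$, appearing with coefficient $1$.
\item Terms of the form $\varphi(x)\circ(y\circ z)-\varphi(y)\circ(x\circ z)$, appearing with coefficient $-1$.
\item Half-weighted terms in which $\varphi$ sits on an inner argument.
\end{enumerate}
For the half-weighted terms I intend to regroup them into the left-symmetric identity \eqref{Novikov2} applied to the triples $(\varphi(x),y,z)$, $(x,\varphi(y),z)$ and $(x,y,\varphi(z))$. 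Whatever remains uncancelled is rewritten via \eqref{1/2id1} and \eqref{Novikov1} to move $\varphi$ onto the outer right factor. I expect this to leave a multiple of
\[
\bigl[(x\circ y)\circ\varphi(z)-(y\circ x)\circ\varphi(z)\bigr]-\bigl[\varphi(x)\circ(y\circ z)-\varphi(y)\circ(x\circ z)\bigr]
\]
equal to zero, with a nonzero coefficient (a power of $\tfrac12$). That gives \eqref{1/2id2}.

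The main obstacle is this bookkeeping in the second identity. Specifically, terms like $x\circ(\varphi(y)\circ z)$ and $(x\circ\varphi(y))\circ z$ must pair up correctly under the $x\leftrightarrow y$ antisymmetrization. If the direct regrouping does not close, I will instead apply $\varphi$ to both sides of \eqref{Novikov2} with the roles of $x,y,z$ permuted, and take a linear combination.
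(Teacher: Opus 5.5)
Your proposal is correct and is essentially the paper's proof. Both identities come from applying $\varphi$ to \eqref{Novikov1} and \eqref{Novikov2} and expanding twice, and the first one closes via right-commutativity exactly as you describe. For the second, your planned regrouping does close: the three instances of \eqref{Novikov2} on $(\varphi(x),y,z)$, $(x,\varphi(y),z)$, $(x,y,\varphi(z))$ absorb all eight half-weighted terms with nothing left over, giving $\tfrac12\bigl[(x\circ y)\circ\varphi(z)-(y\circ x)\circ\varphi(z)-\varphi(x)\circ(y\circ z)+\varphi(y)\circ(x\circ z)\bigr]=0$, so neither \eqref{1/2id1} nor your fallback is needed.
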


\begin{proof}
For all $ x, y, z \in A$, we have
\begin{eqnarray*}
\varphi\bigl((x \circ y) \circ z\bigr)& =& \frac{1}{2} \varphi(x \circ y) \circ z + \frac{1}{2} (x \circ y) \circ \varphi(z)\\
&=& \frac{1}{4} (x \circ \varphi(y)) \circ z + \frac{1}{4} (\varphi(x) \circ y) \circ z + \frac{1}{2} (x \circ y) \circ \varphi(z) ,
\end{eqnarray*}
and
\begin{eqnarray*}
\varphi\bigl((x \circ z) \circ y\bigr)& =& \frac{1}{2} \varphi(x \circ z) \circ y + \frac{1}{2} (x \circ z) \circ \varphi(y)\\
&= &\frac{1}{4} (x \circ \varphi(z)) \circ y + \frac{1}{4} (\varphi(x) \circ z) \circ y + \frac{1}{2} (x \circ z) \circ \varphi(y).
\end{eqnarray*}
Taking the difference of the two identities above  gives Eq.~\eqref{1/2id1}.

Note that
\begin{eqnarray*}
\varphi\bigl((x \circ y) \circ z\bigr)-\varphi\bigl((y \circ x) \circ z\bigr)&=&\frac{1}{4} (x \circ \varphi(y)) \circ z + \frac{1}{4} (\varphi(x) \circ y) \circ z + \frac{1}{2} (x \circ y) \circ \varphi(z)\\
&&\quad-\frac{1}{4} (y \circ \varphi(x)) \circ z - \frac{1}{4} (\varphi(y) \circ x) \circ z - \frac{1}{2} (y \circ x) \circ \varphi(z),
\end{eqnarray*}
and
\begin{eqnarray*}
\varphi\bigl(x \circ (y \circ z)\bigr)- \varphi\bigl(y \circ (x \circ z)\bigr)&=&\frac{1}{4} x \circ (y \circ \varphi(z)) + \frac{1}{4} x \circ (\varphi(y) \circ z) + \frac{1}{2} \varphi(x) \circ (y \circ z)\\
&&\quad-\frac{1}{4} y \circ (x \circ \varphi(z)) - \frac{1}{4} y \circ (\varphi(x) \circ z) - \frac{1}{2} \varphi(y) \circ (x \circ z).
\end{eqnarray*}
By Eqs.~\eqref{Novikov1} and~\eqref{Novikov2}, we have
\begin{eqnarray*}
0&=&\varphi\bigl((x \circ y) \circ z\bigr)-\varphi\bigl((y \circ x) \circ z\bigr)-\varphi\bigl(x \circ (y \circ z)\bigr)+ \varphi\bigl(y \circ (x \circ z)\bigr)\\
&=&\frac{1}{4}\varphi(y)\circ(x\circ z)-\frac{1}{4}\varphi(x)\circ(y\circ z)+\frac{1}{4}(x\circ y)\circ\varphi(z) - \frac{1}{4}(y\circ x)\circ\varphi(z).
\end{eqnarray*}
This completes the proof.
\end{proof}
\delete{
More generally, this Lemma remains valid for an arbitrary $\delta$-derivation $\varphi$ of the Novikov algebra over a field with characteristic 0, the proof follows similar arguments.
\vspace{+,2cm}}

Finally, we show a relationship between transposed Novikov-Poisson algebras and Hom-Novikov algebras.
\begin{defi}~\cite{HomNovikov}
A {\bf Hom-Novikov algebra} is a triple $(A, \circ, \alpha)$ consisting of a vector space $A$, a binary operation $\circ : A \otimes A \longrightarrow A$ and an algebra homomorphism $\alpha : A \longrightarrow A$ satisfying
\begin{eqnarray}
&(x\circ y)\circ\alpha(z) - \alpha(x)\circ(y\circ z) = (y\circ x)\circ\alpha(z) - \alpha(y)\circ(x\circ z),\\
&(x\circ y)\circ\alpha(z) = (x\circ z)\circ\alpha(y)\;\;\;\text{for all $x$, $y$, $z\in A$.}
\end{eqnarray}
\end{defi}

\begin{pro}
Let $(A,\cdot,\circ)$ be a {\transNovikovpoisson algebra} and $p\in A$. If $L_\cdot(p)$ is an algebra homomorphism of $(A,\circ)$, then $(A,\circ, L_\cdot(p))$ is a Hom-Novikov algebra.
\end{pro}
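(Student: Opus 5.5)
The plan is to apply Lemma \ref{1/2id} with $\varphi=L_\cdot(p)$. By Lemma \ref{1/2derivationintheT}, $L_\cdot(p)$ is a $\frac{1}{2}$-derivation of $(A,\circ)$. Set $\alpha:=L_\cdot(p)$, so $\alpha(z)=p\cdot z$. By hypothesis $\alpha$ is an algebra homomorphism of $(A,\circ)$, so the homomorphism requirement in the definition of a Hom-Novikov algebra holds. Only the two defining identities remain to be checked.

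For the second identity, $(x\circ y)\circ\alpha(z)=(x\circ z)\circ\alpha(y)$, I would use Eq.~\eqref{1/2id1}, which gives $(x\circ y)\circ\alpha(z)=(x\circ\alpha(y))\circ z$. Applying Eq.~\eqref{Novikov1} to the right side gives $(x\circ z)\circ\alpha(y)$, as required. An alternative route is Eq.~\eqref{Tid2} with $h=p$, which yields $(x\circ y)\circ(p\cdot z)=(x\circ z)\circ(p\cdot y)$ directly.

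For the first identity, I need
\[
(x\circ y)\circ\alpha(z)-(y\circ x)\circ\alpha(z)=\alpha(x)\circ(y\circ z)-\alpha(y)\circ(x\circ z).
\]
This is exactly Eq.~\eqref{1/2id2} with $\varphi=\alpha$, which also matches Eq.~\eqref{Tid3} with $h=p$. Rearranging the terms gives the first Hom-Novikov identity verbatim.

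There is essentially no obstacle here, since both identities are already available from Lemma \ref{1/2id}. The only points to watch are matching the sides of the Hom-Novikov axioms against the lemma, and confirming that the homomorphism hypothesis is needed only for the requirement that $\alpha$ be an algebra homomorphism, not for the identities themselves.
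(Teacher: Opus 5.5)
Your proposal is correct and follows the paper's proof. Both arguments invoke Lemma~\ref{1/2derivationintheT} to see that $L_\cdot(p)$ is a $\frac{1}{2}$-derivation, then read off the two Hom-Novikov identities from Lemma~\ref{1/2id}, with the homomorphism hypothesis used only for the requirement on $\alpha$. Your explicit use of Eq.~\eqref{Novikov1} to rewrite $(x\circ\alpha(y))\circ z$ as $(x\circ z)\circ\alpha(y)$ supplies a step the paper leaves implicit.
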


\begin{proof}
By Lemma~\ref{1/2derivationintheT}, $L_\cdot(p)$ is a $\frac{1}{2}$-derivation of $(A,\circ)$. Then by Lemma ~\ref{1/2id}, for all $x$, $y$, $z\in A$, we obtain
\begin{eqnarray*}
&(x \circ y) \circ (L_\cdot(p)(z)\bigr)-(y \circ x) \circ (L_\cdot(p)(z)\bigr)=(L_\cdot(p)(x)) \circ (y \circ z)-(L_\cdot(p)(y)) \circ (x \circ z),&\\
&(x \circ y) \circ (L_\cdot(p)(z)\bigr) = (x \circ( L_\cdot(p)(y))) \circ z.
\end{eqnarray*}
Therefore, $(A,\circ,L_\cdot(p))$ is a Hom-Novikov algebra.
\end{proof}

\subsection{$\frac{1}{2}$-derivations of solvable Novikov algebras}
\delete{In ~\cite{1/2sol1}, the study of $\frac{1}{2}$-derivations on solvable Lie algebras was used to construct transposed Poisson algebra structures on solvable Lie algebras. Similarly,} In this subsection, we  consider $\frac{1}{2}$-derivations and {\transNovikovpoisson algebra structures} on solvable Novikov algebras. We first recall some necessary notions.
\begin{defi}
Let $(A,\circ)$ be a Novikov algebra. Define \( A^{(0)} = A \) and define \( A^{(n)} = A^{(n - 1)} \circ A^{(n - 1)} \) for every $n\in \mathbb{Z}_+$. Then \( (A,\circ) \) is called {\bf solvable} if there exists some $n\in \mathbb{Z}_+$ such that \( A^{(n)} = \{0\} \). Define \( A_L^1 = A \) and  \( A_L^n = A_L^{n - 1}\circ A \). Then \( (A,\circ) \) is called {\bf right nilpotent} if there exists some $n\in \mathbb{Z}_+$ such that \( A_L^n = \{0\} \). {\bf Left nilpotency} can be  defined similarly. Define \( A^1 = A \) and \( A^n = \sum_{1 \leq i \leq n - 1} A^i \circ A^{n - i} \), \( n \geq 2 \). Then \(( A,\circ )\) is called {\bf nilpotent} if \( A^n = \{0\} \) for some positive integer \( n \).
\end{defi}

\begin{rmk}\label{rmk-equiv}
By \cite[Theorem 3.3]{Zhang}, $(A,\circ)$ is right nilpotent if and only if $(A^2, \circ)$ is nilpotent if and only if $(A,\circ)$ is solvable.
\end{rmk}
\delete{The following Theorem proven in \cite{Zhang} reveals the relationships between the solvability of a (left) Novikov algebra and its left nilpotency, right nilpotency, and nilpotency.
\vspa
\begin{thm}\label{thm:zhang_key_result}
Let $(A,\circ)$ be a (left) Novikov algebra. Then the following conditions are equivalent:
\begin{enumerate}
    \item $A$ is right nilpotent.
    \item $A^2$ is nilpotent.
    \item $A$ is solvable.
\end{enumerate}
\vspa
\end{thm}}

Let $(A,\circ)$ be a Novikov algebra and \( I \subset A \). Recall that the {\bf annihilator}, {\bf left annihilator} and {\bf right annihilator} of \( I \) in \( A \) are $\operatorname{Ann}_A(I)=\{ a \in A \mid a\circ I =I\circ a= 0 \}$, $\operatorname{Ann}_L(I) = \{ a \in A \mid a\circ I = 0 \}$ and $\operatorname{Ann}_R(I) = \{ a \in A \mid I\circ a = 0 \}$ respectively.

\begin{lem}\label{Ann}\cite[Lemma 2]{Ann}, \cite[Lemma 2.1]{Zhang}
Let $(A,\circ)$ be a Novikov algebra. Then the following conclusion holds.
 \begin{enumerate}
 \item If \( I \) is a left ideal of \( (A,\circ) \), then \( \text{Ann}_L(I) \) is an ideal of \( (A, \circ) \).
  \item If \( I \) is an  ideal of \( (A,\circ) \), then \( \text{Ann}_A(I) \) is an ideal of \( (A,\circ) \).
\end{enumerate}
\end{lem}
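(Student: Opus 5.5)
The plan is to verify both statements directly from the two Novikov identities \eqref{Novikov1} and \eqref{Novikov2}. The only inputs are the absorption properties of $I$ and the vanishing conditions defining the annihilators. Both annihilators are clearly subspaces, so in each case it suffices to show that for $a$ in the annihilator and $b\in A$, both $a\circ b$ and $b\circ a$ lie in it again.

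For (a), let $I$ be a left ideal, so $A\circ I\subseteq I$. Let $a\in\operatorname{Ann}_L(I)$, $b\in A$ and $i\in I$.
\begin{itemize}
\item By \eqref{Novikov1}, $(a\circ b)\circ i=(a\circ i)\circ b=0$, so $a\circ b\in\operatorname{Ann}_L(I)$.
\item For $b\circ a$, rewrite \eqref{Novikov2} with $x=b$, $y=a$, $z=i$ as
\[
(b\circ a)\circ i=b\circ(a\circ i)+(a\circ b)\circ i-a\circ(b\circ i).
\]
\item On the right-hand side, $a\circ i=0$, and $(a\circ b)\circ i=0$ by the previous point.
\item Since $b\circ i\in I$ because $I$ is a left ideal, also $a\circ(b\circ i)=0$.
\end{itemize}
Hence $(b\circ a)\circ i=0$, and $\operatorname{Ann}_L(I)$ is a two-sided ideal.

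For (b), let $I$ be an ideal and $a\in\operatorname{Ann}_A(I)$, so $a\circ I=I\circ a=0$. Since $I$ is in particular a left ideal, part (a) already gives $(a\circ b)\circ I=(b\circ a)\circ I=0$. It remains to show that $i\circ(a\circ b)=0$ and $i\circ(b\circ a)=0$ for all $i\in I$.

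For the first, apply \eqref{Novikov2} with $x=i$, $y=a$, $z=b$:
\[
(i\circ a)\circ b-i\circ(a\circ b)=(a\circ i)\circ b-a\circ(i\circ b).
\]
Here $i\circ a=0$ and $a\circ i=0$. Since $i\circ b\in I$, also $a\circ(i\circ b)=0$. Hence $i\circ(a\circ b)=0$.

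For the second, apply \eqref{Novikov2} with $x=i$, $y=b$, $z=a$:
\[
(i\circ b)\circ a-i\circ(b\circ a)=(b\circ i)\circ a-b\circ(i\circ a).
\]
Since $i\circ b$ and $b\circ i$ lie in the two-sided ideal $I$, both are annihilated by $a$ on the right. Also $i\circ a=0$. Hence $i\circ(b\circ a)=0$, so $\operatorname{Ann}_A(I)$ is an ideal.

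There is no real obstacle here. The one point needing care is to choose, for each of the four required vanishings, the instance of \eqref{Novikov1} or \eqref{Novikov2} whose remaining terms are killed by the hypotheses. Part (a) uses only left absorption of $I$, while (b) genuinely needs $I$ to be two-sided, because the term $(b\circ i)\circ a$ requires $b\circ i\in I$.
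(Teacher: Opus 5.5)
Your proof is correct and follows essentially the same route as the paper's argument, which checks the four required vanishings directly from \eqref{Novikov1} and \eqref{Novikov2}, using your instances of \eqref{Novikov2} for $i\circ(a\circ b)$ and $i\circ(b\circ a)$. The only difference is that, for $(b\circ a)\circ i$ in part (b), the paper uses \eqref{Novikov1} in the form $(b\circ a)\circ i=(b\circ i)\circ a=0$ instead of appealing to part (a).

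One small correction to your closing remark: $b\circ i\in I$ is only left absorption. The place where (b) truly needs $I$ to be two-sided is $i\circ b\in I$.
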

\delete{\begin{proof}
\delete{Since the first part is already proofed in ~\cite{Ann}, we only prove the second part here.
By Eq.~\eqref{Novikov1} and ~\eqref{Novikov2}, we have\vspa
\begin{eqnarray*}
&&(a\circ x)\circ i=(a\circ i)\circ x=0,\quad i\circ (a\circ x)=(i\circ a)\circ x+a\circ(i\circ x)-(a\circ i)\circ x=0,\\
&&(x\circ a)\circ i=(x\circ i)\circ a=0,\quad i\circ (x\circ a)=(i\circ x)\circ a+x\circ(i\circ a)-(x\circ i)\circ a=0,\vspa
\end{eqnarray*}
for all $x\in \text{Ann}_A(I),\:a\in A,\:i\in I$. Hence, we obtain $a\circ x,\:x\circ a\in \text{Ann}_A(I)$.
\vspa}
\end{proof}}

\begin{lem}\label{decomposable}
Let $(A, \circ)$ be a decomposable Novikov algebra, i.e., $A$ is the direct sum of two non-zero ideals. Then $(A,\circ)$ has non-trivial $\frac{1}{2}$-derivations.
\end{lem}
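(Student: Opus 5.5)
Write $A=I\oplus J$ with $I,J$ non-zero ideals of $(A,\circ)$. We take $\varphi$ to be the projection onto $I$ along $J$: for $x=x_I+x_J$ with $x_I\in I$, $x_J\in J$, set $\varphi(x)=x_I$. The plan is to show that $\varphi$ is a $\frac{1}{2}$-derivation and that it is not a scalar multiple of the identity.

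First we check the multiplicative structure. Since $I$ and $J$ are both ideals, $I\circ J\subseteq I\cap J=\{0\}$, and likewise $J\circ I=\{0\}$. Hence for $x=x_I+x_J$ and $y=y_I+y_J$ we get
$$x\circ y=x_I\circ y_I+x_J\circ y_J,$$
with $x_I\circ y_I\in I$ and $x_J\circ y_J\in J$. Therefore $\varphi(x\circ y)=x_I\circ y_I$.

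Next we compute the right-hand side of the $\frac{1}{2}$-derivation identity. We have $\varphi(x)\circ y=x_I\circ(y_I+y_J)=x_I\circ y_I$, and similarly $x\circ\varphi(y)=x_I\circ y_I$. So
$$\tfrac{1}{2}\bigl(\varphi(x)\circ y+x\circ\varphi(y)\bigr)=x_I\circ y_I=\varphi(x\circ y).$$
Thus $\varphi$ is a $\frac{1}{2}$-derivation; indeed it is even an ordinary derivation.

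Finally, $\varphi$ is non-trivial. Pick non-zero $a\in I$ and $b\in J$. Then $\varphi(a)=a$ and $\varphi(b)=0$. If $\varphi=\alpha\,\id$ for some $\alpha\in{\bf k}$, the first equation forces $\alpha=1$ and the second forces $\alpha=0$, a contradiction.

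No step is a real obstacle. The only point needing care is the vanishing of the cross products $I\circ J=J\circ I=0$, which uses that both summands are two-sided ideals.
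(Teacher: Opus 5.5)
Your proposal is correct and follows the paper's proof: it takes the projection onto one summand along the other, uses $I\circ J=J\circ I=0$ to verify the $\frac{1}{2}$-derivation identity, and notes that the projection is not a scalar multiple of the identity. Your explicit check of non-triviality ($\varphi(a)=a$, $\varphi(b)=0$) spells out a point the paper only asserts.
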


\begin{proof}
By the hypothesis, we have $\operatorname{dim}(A)\geq2$. Assume that $A=A_1\oplus A_2$, where $A_1,A_2$ are non-zero ideals of $(A,\circ)$. For any $x_i=y_i+z_i\in A$, where $y_i\in A_1$, $z_i\in A_2$, define a linear map $\varphi:A \rightarrow A$ by $\varphi(x_i)=z_i$.
Then for any $x_i$, $x_j\in A$, we have
\begin{eqnarray*}
&&\varphi(x_i\circ x_j)=\varphi((y_i+z_i)\circ (y_j+z_j))=\varphi(y_i\circ y_j+z_i\circ z_j)=z_i\circ z_j,\\
&&\frac{1}{2}\varphi(x_i)\circ x_j+\frac{1}{2}x_i\circ \varphi(x_j)=\frac{1}{2}z_i\circ x_j+\frac{1}{2}x_i\circ z_j=\frac{1}{2}z_i\circ z_j+\frac{1}{2}z_i\circ z_j.
\end{eqnarray*}
Therefore, the linear map $\varphi$ defined above is a non-trivial $\frac{1}{2}$-derivation of $(A,\circ)$. Then this conclusion holds.
\end{proof}

\begin{lem}\label{1/2-solvable1}
Let $(A,\circ)$ be a finite-dimensional Novikov algebra with $\operatorname{Ann}_A(A)\neq 0$ and $A\circ A\neq A$. Then $A$ has non-trivial $\frac{1}{2}$-derivations.
\end{lem}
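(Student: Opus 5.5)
The plan is to write down an explicit rank-one $\frac{1}{2}$-derivation built from the two hypotheses. Since $\operatorname{Ann}_A(A)\neq 0$, fix a non-zero element $z\in \operatorname{Ann}_A(A)$, so that $z\circ x=x\circ z=0$ for all $x\in A$. Since $A$ is finite-dimensional and $A\circ A$ is a proper subspace of $A$, there is a non-zero linear functional $f:A\rightarrow {\bf k}$ with $f(A\circ A)=0$; to get it, extend a basis of $A\circ A$ to a basis of $A$ and take the dual functional of a basis vector outside $A\circ A$. Then define
\begin{eqnarray*}
\varphi(x):=f(x)\,z\;\;\;\text{for all $x\in A$.}
\end{eqnarray*}

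Next I verify the $\frac{1}{2}$-derivation identity of Definition \ref{derivation} with $\delta=\frac{1}{2}$. For the left-hand side, $\varphi(x\circ y)=f(x\circ y)z=0$, because $x\circ y\in A\circ A$. For the right-hand side,
\begin{eqnarray*}
\frac{1}{2}\big(\varphi(x)\circ y+x\circ\varphi(y)\big)=\frac{1}{2}\big(f(x)\,z\circ y+f(y)\,x\circ z\big)=0,
\end{eqnarray*}
because $z\in\operatorname{Ann}_A(A)$. Hence $\varphi$ is a $\frac{1}{2}$-derivation of $(A,\circ)$. Neither Novikov identity is needed here.

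The only real point is non-triviality, meaning $\varphi\neq\varphi_\alpha$ for every $\alpha\in{\bf k}$. The map $\varphi$ is non-zero, since $f\neq 0$ and $z\neq 0$, and its image is ${\bf k}z$, so it has rank exactly $1$. On the other hand, $\varphi_\alpha$ is either $0$ or has rank $\dim A$. So whenever $\dim A\geq 2$, $\varphi$ is a non-trivial $\frac{1}{2}$-derivation.

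This dimension issue is the step I expect to be the obstacle. If $\dim A=1$, then $A\circ A\neq A$ forces $A\circ A=0$, and every linear endomorphism of $A$ is a scalar, so no non-trivial $\frac{1}{2}$-derivation can exist. Therefore the statement must be read with $\dim A\geq 2$, as in the standing convention of Theorem \ref{1/2-trans}. I would state this explicitly in the proof and then conclude by the rank argument above.
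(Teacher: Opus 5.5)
Your proof is correct and uses the same mechanism as the paper: any linear map vanishing on $A\circ A$ with image in $\operatorname{Ann}_A(A)$ is a $\frac{1}{2}$-derivation. Your single rank-one map $x\mapsto f(x)z$ replaces the paper's case split into $A\circ A=0$; $(A\circ A)\cap\operatorname{Ann}_A(A)=0$, handled by the projection of Lemma \ref{decomposable}; and $(A\circ A)\cap\operatorname{Ann}_A(A)\neq 0$, where the extra choice $x_1\in A\circ A$ is not needed for the lemma itself.

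Your dimension caveat is a genuine correction: the one-dimensional zero algebra satisfies the hypotheses but has only scalar $\frac{1}{2}$-derivations. So the paper's step ``all linear maps are $\frac{1}{2}$-derivations, then this conclusion holds'' for $A\circ A=0$ needs $\dim A\geq 2$. That condition is automatic once $A\circ A\neq 0$, which is the situation of Theorem \ref{Annnot0}.
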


\begin{proof}
When $A\circ A=0$,  all linear maps $\varphi:A\rightarrow A$ are $\frac{1}{2}$-derivations. Then this conclusion holds.

Next, we only need to consider the case when $A\circ A\neq0$. Since $A\circ A\neq A$, we have $\operatorname{dim}(A)\geq 2$.

If $(A\circ A) \cap \operatorname{Ann}_{A}(A) = 0$, then there exists a subspace $V$ of $A$ such that $A=V\oplus (A\circ A)\oplus \operatorname{Ann}_A(A)$  as vector spaces, where $V$ can be the zero vector space. Since $V\oplus (A\circ A)$ and $\operatorname{Ann}_A(A)$ are non-zero ideals of $(A,\circ)$, then this conclusion follows by Lemma~\ref{decomposable}.

If $(A\circ A) \cap \operatorname{Ann}_{A}(A) \neq0$, choose a non-zero element $x_1\in (A\circ A) \cap \operatorname{Ann}_{A}(A)$. Moreover, there exists a non-zero subspace $W$ of $A$, such that $A=(A\circ A)\oplus W$ as vector spaces. \delete{Then we can extend $x_1$ into a linear basis $X=\{x_1,...,x_n\}$ of $A\circ A$ and assume that $Y=\{y_1,...,y_m\}$ is a linear basis of $W$. Next,} Define a linear map $\varphi:A \rightarrow A$ by $\varphi(x)=0,\:\varphi(y)=x_1$, for all $x\in A\circ A$ and $y\in W$. Then for all $z_1$, $z_2\in A$, we have
\begin{eqnarray*}
&&\varphi(z_1\circ z_2)=0=\frac{1}{2}\varphi(z_1)\circ z_2+\frac{1}{2}z_1\circ \varphi(z_2).
\end{eqnarray*}
Therefore, $\varphi$ defined above is a non-trivial $\frac{1}{2}$-derivation of $(A,\circ)$.
\end{proof}

Based on the construction of $\frac{1}{2}$-derivations in Lemma \ref{1/2-solvable1}, we obtain the following theorem.
\begin{thm}\label{Annnot0}
Let $(A,\circ)$ be a finite-dimensional Novikov algebra. If $A\circ A\neq 0$, $A\circ A\neq A$ and $\operatorname{Ann}_A(A)\neq 0$, then there are non-trivial {\transNovikovpoisson structures} on $(A,\circ)$.
\end{thm}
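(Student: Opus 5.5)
The plan is to write down an explicit nonzero commutative associative product on $A$ that is compatible with $\circ$, in the spirit of the second case in the proof of Lemma~\ref{1/2-solvable1}. Its left multiplications $L_\cdot(z)$ will be $\frac{1}{2}$-derivations of the shape constructed there, namely maps that kill $A\circ A$ and take values in $\operatorname{Ann}_A(A)$.

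\textbf{Construction.}
\begin{enumerate}
\item Fix a nonzero element $x_1\in\operatorname{Ann}_A(A)$. This is possible by hypothesis.
\item Since $A\circ A\neq A$ and $A$ is finite-dimensional, choose a nonzero linear functional $f:A\to{\bf k}$ with $f(A\circ A)=0$. For instance, take a complement $W$ of $A\circ A$, let $f$ vanish on $A\circ A$, and let $f$ be any nonzero functional on $W$.
\item Define
\begin{eqnarray*}
x\cdot y:=f(x)f(y)\,x_1\;\;\;\text{for all $x$, $y\in A$.}
\end{eqnarray*}
\end{enumerate}

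\textbf{Verification.} I would check the axioms one at a time.
\begin{enumerate}
\item Commutativity is clear.
\item Associativity holds because $(x\cdot y)\cdot z=f(x)f(y)f(z)f(x_1)\,x_1$. This expression is symmetric in $x,y,z$, so it equals $x\cdot(y\cdot z)$. Note that this works whether or not $x_1\in A\circ A$, so no case distinction as in Lemma~\ref{1/2-solvable1} is needed.
\item For Eq.~\eqref{transNP1}, both sides are scalar multiples of $x_1\circ z$ or $x_1\circ y$. These vanish because $x_1\in\operatorname{Ann}_A(A)$.
\item For Eq.~\eqref{transNP2}, the left side is $2f(z)f(x\circ y)x_1=0$ since $f$ vanishes on $A\circ A$. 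The right side is $f(z)f(x)\,x_1\circ y+f(z)f(y)\,x\circ x_1=0$, again because $x_1$ lies in the annihilator.
\end{enumerate}
Hence $(A,\cdot,\circ)$ is a transposed Novikov-Poisson algebra.

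\textbf{Non-triviality.} The Novikov product is nonzero since $A\circ A\neq0$. The commutative product is nonzero: pick $w$ with $f(w)=1$, and then $w\cdot w=x_1\neq0$. So the structure is non-trivial in the sense of the definition. Equivalently, each $L_\cdot(z)=f(z)f(\cdot)x_1$ is a $\frac{1}{2}$-derivation of exactly the type produced in Lemma~\ref{1/2-solvable1}, which is consistent with Lemma~\ref{1/2derivationintheT}.

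\textbf{Main obstacle.} The only delicate point I anticipate is associativity when $f(x_1)\neq0$. The symmetry argument in the verification step disposes of it, so I expect no real obstacle; the remainder is routine checking.
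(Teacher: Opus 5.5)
Your proof is correct. Where $(A\circ A)\cap\operatorname{Ann}_A(A)\neq0$, your product coincides with the paper's; the real difference is that you avoid the paper's case split, and that matters. The paper takes $x_1\in(A\circ A)\cap\operatorname{Ann}_A(A)$, sets $y_i\cdot y_j=x_1$ on a basis $\{y_i\}$ of a complement $W$ of $A\circ A$, and makes all other basis products zero. That is your formula $x\cdot y=f(x)f(y)\,x_1$ with $f$ equal to $1$ on each $y_i$ and $0$ on $A\circ A$. There $f(x_1)=0$, so associativity is immediate.

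When $(A\circ A)\cap\operatorname{Ann}_A(A)=0$, the paper uses a different product: $x_i\cdot x_j=x_i+x_j$ on a basis $\{x_i\}$ of $\operatorname{Ann}_A(A)$, with every product involving a basis vector of $V\oplus(A\circ A)$ set to zero. Your observation that $(x\cdot y)\cdot z=f(x)f(y)f(z)f(x_1)\,x_1$ is symmetric in $x,y,z$ lets one rank-one product cover both cases. This includes the case $x_1\notin A\circ A$ with $f(x_1)\neq0$.

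This is more robust, not just shorter. The paper's first-case product is associative only when $\dim\operatorname{Ann}_A(A)=1$. For two independent annihilator basis vectors $x_1,x_2$ it gives $(x_1\cdot x_1)\cdot x_2=2x_1+2x_2$ but $x_1\cdot(x_1\cdot x_2)=3x_1+x_2$. That case therefore needs repair. Your formula with $f$ the coordinate functional of $x_1$ in the basis $X\cup Y$ supplies it: $x_1\cdot x_1=x_1$, all other basis products zero. Your argument needs no such patch.
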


\begin{proof}
We prove it with the same notations as in the proof of Lemma~\ref{1/2-solvable1}.\delete{ When $A\circ A=0$, the conclusion holds clearly. Next, we consider $A\circ A\neq0$, then}
 By the assumption, we have $\operatorname{dim}(A)\geq 2$.

If $(A\circ A) \cap \operatorname{Ann}_{A}(A) = 0$,  let $Y = \{ y_1, \ldots, y_m \}$ be a basis of $V\oplus (A\circ A)$ and $X = \{x_1, \ldots, x_n \}$ be a basis of $\operatorname{Ann}_A(A)$. Define a binary operation $\cdot:A\otimes A \rightarrow A$ by $x_i\cdot x_j=x_i+x_j$, and $z_1\cdot z_2=0$, if $\{z_1,z_2\}\subset X\cup Y$ and $\{z_1,z_2\}\nsubseteq X$. Obviously, the binary operation $\cdot$ is commutative and associative. Moreover, for all $x,y,z\in A$, we have
\begin{eqnarray*}
&2z\cdot(x\circ y)\in 2z\cdot(A\circ A)=0,\:(z\cdot x)\circ y\in\operatorname{Ann_A}(A)\circ A=0,\\
&x\circ (z\cdot y)\in A\circ \operatorname{Ann_A}(A)=0,\:(x\cdot y)\circ z\:,(x\cdot z)\circ y\in \operatorname{Ann_A}(A)\circ A=0.
\end{eqnarray*}
Thus, Eqs.~\eqref{transNP1} and ~\eqref{transNP2} hold for the triple $(A,\cdot,\circ)$. Then $(A,\cdot,\circ)$ is a non-trivial transposed Novikov-Poisson algebra.

If $(A\circ A) \cap \operatorname{Ann}_{A}(A) \neq 0$, \delete{ then exists a non-zero element $y_1\in (A\circ A) \cap \operatorname{Ann}_{A}(A) $. So we may assume that $A=W\oplus (A\circ A)$ as vector spaces. } let $X = \{ x_1, \ldots, x_n \}$ be a basis of $A\circ A$ and $Y=\{y_1,\ldots,y_m\}$ be a linear basis of $W$. Define a binary operation $\cdot:A\otimes A\rightarrow A$ by $y_i\cdot y_j=x_1$, and $z_1\cdot z_2=0$ if $\{z_1,z_2\}\subset X\cup Y$ and $\{z_1,z_2\}\nsubseteq X$. Obviously, the binary operation $\cdot$ is commutative and associative. Then for all $x,y,z\in A$, similar to the previous case, by analogous computations, we obtain
\begin{eqnarray*}
&2z\cdot(x\circ y)=0=(z\cdot x)\circ y+x\circ (z\cdot y),\\
&(x\cdot y)\circ z=0=(x\cdot z)\circ y.
\end{eqnarray*}
Therefore,  $(A,\cdot,\circ )$ is a {\transNovikovpoisson} algebra.
\end{proof}

\delete{For a solvable Novikov algebra $(A,\circ)$, we have $A\circ A\neq A$, if not $A^{(n)}=A$ would hold for all $n\in \mathbb{N}$. By Theorem~\ref{Annnot0}, we obtain the following corollary directly.\vspa}
\begin{cor}
Let $(A,\circ)$ be a finite dimensional solvable Novikov algebra with $A\circ A\neq 0$ and $\operatorname{Ann}_{A}(A) \neq 0$. Then there is a non-trivial {\transNovikovpoisson algebra} structure on  $(A,\circ)$.
\end{cor}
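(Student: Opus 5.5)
The plan is to reduce the corollary to Theorem~\ref{Annnot0}. The hypotheses $A\circ A\neq 0$ and $\operatorname{Ann}_A(A)\neq 0$ appear verbatim there, and finite-dimensionality is assumed in both statements. So the only thing left is the condition $A\circ A\neq A$, which has to come from solvability.

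To get it, argue by contradiction. Suppose $A\circ A=A$, i.e.\ $A^{(1)}=A^{(0)}$. An induction on $n$ then gives $A^{(n)}=A$ for every $n\in\mathbb{N}$, since
\[
A^{(n)}=A^{(n-1)}\circ A^{(n-1)}=A\circ A=A.
\]
Solvability provides some $n\in\mathbb{Z}_+$ with $A^{(n)}=\{0\}$, which forces $A=\{0\}$. That contradicts $A\circ A\neq 0$. Hence $A\circ A\neq A$.

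With $A\circ A\neq 0$, $A\circ A\neq A$ and $\operatorname{Ann}_A(A)\neq 0$ all in hand, Theorem~\ref{Annnot0} applies directly and produces a non-trivial {\transNovikovpoisson} structure on $(A,\circ)$. Non-triviality is automatic here: the constructed product $\cdot$ is nonzero, and $\circ$ is nonzero because $A\circ A\neq 0$.

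There is no real obstacle. The only point needing care is the induction showing that $A\circ A=A$ propagates to every derived power, and Remark~\ref{rmk-equiv} is not needed.
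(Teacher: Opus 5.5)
Your proposal is correct and takes the same route as the paper: solvability gives $A\circ A\neq A$, and then Theorem~\ref{Annnot0} applies directly. Your induction ($A\circ A=A$ forces $A^{(n)}=A$ for all $n$, hence $A=0$, contradicting $A\circ A\neq 0$) spells out the step the paper states in one line.
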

\begin{proof}
Since $(A,\circ)$ is solvable, we have $A\circ A\neq A$. Then it follows directly by Theorem~\ref{Annnot0}.
\end{proof}

\begin{ex}
Let $A$ be a 3-dimensional vector space over $\mathbb{C}$ with a basis $\{e_1, e_2,e_3\}$.
By \cite{erwei}, $(A, \circ)$ is a transitive (right nilpotent) Novikov algebra with the non-zero products given by
\begin{eqnarray*}
&&e_3 \circ e_2 = e_2.
\end{eqnarray*}
By Remark \ref{rmk-equiv}, $(A,\circ)$ is solvable. By a straightforward computation, we have $\operatorname{Ann}_R(A)=\mathbb{C}e_1\oplus\mathbb{C}e_3$ and $\operatorname{Ann}_L(A)=\mathbb{C}e_1\oplus\mathbb{C}e_2$.  Thus, $\operatorname{Ann}_A(A)=\mathbb{C}e_1$ and $(A\circ A)\cap \operatorname{Ann}_A(A)=\{0\}$, since $(A\circ A)=\mathbb{C}e_2$. \delete{With the notations as in the proof of Theorem~\ref{Annnot0}, let $X=\{e_1\}$ and $Y=\{e_2,e_3\}$.} By applying Theorem~\ref{Annnot0}, define a commutative and associative algebra $(A,\cdot)$ whose non-zero products are given by $e_1\cdot e_1=2e_1$. Clearly, the triple $(A,\cdot,\circ)$ forms a {\transNovikovpoisson algebra}.
\end{ex}

To further illustrate Theorem \ref{Annnot0}, we provide an example of transposed Novikov-Poisson algebras with Novikov algebras which are not solvable but satisfy $A\circ A\neq A$.

\delete{To further illustrate Theorem~\ref{thm:zhang_key_result}, we will provide an example of a Novikov algebra that is not solvable but satisfies $A\circ A\neq A$.\vspa}
\begin{ex}
Let $A$ be a 3-dimensional vector space over $\mathbb{C}$ with a basis $\{e_1, e_2,e_3\}$. By \cite{erwei}, $(A, \circ)$ is not a transitive (right nilpotent) algebra with non-zero products given by
\begin{eqnarray*}
&&e_1 \circ e_1 = e_2,\quad e_3\circ e_3=e_3.
\end{eqnarray*}
By Remark \ref{rmk-equiv}, $(A,\circ)$ is not solvable. It is easy to see that $\operatorname{Ann}_R(A)=\mathbb{C}e_2$ and $\operatorname{Ann}_L(A)=\mathbb{C}e_2$. Therefore, we have $\operatorname{Ann}_A(A)=\mathbb{C}e_2$, and $(A\circ A)\cap \operatorname{Ann}_A(A)=\mathbb{C}e_2$, since $A\circ A=\mathbb{C}e_2\oplus \mathbb{C}e_3$.\delete{ With the notations as in the proof of Theorem~\ref{Annnot0}.} Let $X=\{x_1=e_2,e_3\}$ and $Y=\{e_1\}$. By applying Theorem~\ref{Annnot0}, define a commutative associative algebra $(A,\cdot)$, whose non-zero products are given by $e_1\cdot e_1=e_2$. Then the triple $(A,\cdot,\circ)$ is a {\transNovikovpoisson algebra}.
\end{ex}

Next, we will study $\frac{1}{2}$-derivations and  {\transNovikovpoisson algebra structures} on solvable Novikov algebras with $\operatorname{Ann}_A(A)=0$. However, some three-dimensional examples illustrate that for a solvable Novikov algebra with $\operatorname{Ann}_A(A)=0$, a non-trivial {\transNovikovpoisson algebra structure} may not necessarily exist.
\vspa
\begin{ex}
Let $A$ be a 3-dimensional vector space over $\mathbb{C}$ with a basis $\{e_1, e_2,e_3\}$. By \cite{erwei}, $(A, \circ)$ is a transitive (right nilpotent) algebra with the non-zero products given by
\begin{eqnarray*}
&&e_2 \circ e_2 = e_1,\quad e_3\circ e_1=e_1,\quad e_3\circ e_2=\frac{1}{2}e_2.
\end{eqnarray*}
By Remark \ref{rmk-equiv}, $(A,\circ)$ is solvable and $\operatorname{Ann}_A(A)=0$. By a straightforward computation, the {\transNovikovpoisson algebra} $(A,\cdot,\circ)$ is trivial.
\end{ex}

Finally, we provide a construction of $\frac{1}{2}$-derivations and the corresponding {\transNovikovpoisson} algebra structures on Novikov algebras with $\operatorname{Ann}_A(A) = 0$.

\begin{lem}\label{spec}
Let $(A,\circ)$ be a solvable Novikov algebra with $\operatorname{Ann}_A(A)=0$. If $\operatorname{Ann}_A(A\circ A)\setminus \operatorname{Ann}_L(A)\neq 0$, then $A$ has non-trivial $\frac{1}{2}$-derivations.
\end{lem}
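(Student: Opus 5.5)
The plan is to take the left multiplication by a suitable element as the required $\frac{1}{2}$-derivation. By hypothesis there is an element $a\in \operatorname{Ann}_A(A\circ A)$ with $a\notin \operatorname{Ann}_L(A)$. Concretely,
\[
a\circ(x\circ y)=0,\qquad (x\circ y)\circ a=0\quad\text{for all } x,y\in A,
\]
and $a\circ A\neq 0$. I will define $\varphi:=L_\circ(a)$, that is, $\varphi(x)=a\circ x$, and show that it is a non-trivial $\frac{1}{2}$-derivation of $(A,\circ)$.

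\textbf{Step 1: $\varphi$ is a $\frac{1}{2}$-derivation.} The left-hand side vanishes immediately, since $\varphi(x\circ y)=a\circ(x\circ y)=0$. So it suffices to show
\[
(a\circ x)\circ y+x\circ(a\circ y)=0 \quad\text{for all } x,y\in A.
\]
\begin{enumerate}
\item Apply Eq.~\eqref{Novikov2} to the triple $x,a,y$:
\[
(x\circ a)\circ y-x\circ(a\circ y)=(a\circ x)\circ y-a\circ(x\circ y).
\]
\item The last term $a\circ(x\circ y)$ is zero by the choice of $a$.
\item By Eq.~\eqref{Novikov1}, $(x\circ a)\circ y=(x\circ y)\circ a$, and this is zero because $(A\circ A)\circ a=0$.
\item Substituting these two vanishings gives $x\circ(a\circ y)=-(a\circ x)\circ y$, which is exactly the required identity.
\end{enumerate}
This step is a short Novikov-identity manipulation; the key point is to route the term $x\circ(a\circ y)$ through Eq.~\eqref{Novikov2} so that both annihilation conditions on $a$ get used.

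\textbf{Step 2: $\varphi$ is non-trivial.} Since $a\notin\operatorname{Ann}_L(A)$, we have $\varphi\neq 0$. Suppose, for contradiction, that $\varphi=\varphi_\lambda$ for some $\lambda\neq 0$, i.e.\ $a\circ x=\lambda x$ for all $x\in A$.
\begin{enumerate}
\item For any $x,y\in A$, we get $\lambda(x\circ y)=a\circ(x\circ y)=0$, so $A\circ A=0$.
\item Then every element of $A$ annihilates $A$, so $\operatorname{Ann}_A(A)=A$.
\item Since $A\ni a\neq 0$, this contradicts the hypothesis $\operatorname{Ann}_A(A)=0$.
\end{enumerate}
Hence $\varphi$ is not a scalar map, so it is a non-trivial $\frac{1}{2}$-derivation.

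I expect Step 1 to be the main point, namely finding the right arrangement of Eqs.~\eqref{Novikov1} and \eqref{Novikov2}. The solvability hypothesis does not seem to be needed for this argument; it only places the lemma in context.
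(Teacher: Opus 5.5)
Your proof is correct and follows essentially the same route as the paper: it takes $\varphi=L_\circ(w)$ for $w\in\operatorname{Ann}_A(A\circ A)\setminus\operatorname{Ann}_L(A)$, checks $(w\circ x)\circ y+x\circ(w\circ y)=(x\circ y)\circ w+w\circ(x\circ y)=0$ using the two Novikov identities, and rules out a scalar $\varphi$ because $k_w(A\circ A)=0$ would contradict $w\notin\operatorname{Ann}_L(A)$. Your observation that solvability is never used is also consistent with the paper's argument, which does not invoke it either.
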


\begin{proof}
First, we have $\operatorname{dim}(A)\geq2$. Choose an element $w\in \operatorname{Ann}_A(A\circ A)\setminus \operatorname{Ann}_L(A)$, we define a linear map $\varphi:A\rightarrow A$ by $\varphi(x)=w\circ x$. If there exists $k_w\in {\bf k}$ such that $\varphi(x)=w\circ x=k_wx$ for all $x\in A$, then we have $\varphi(x\circ y)=w\circ (x\circ y)=k_w(x\circ y)=0$ for all $x,y\in A$. Since $A\circ A\neq 0 $, we have $k_w=0=w\circ x$, which contradicts with $w\notin \operatorname{Ann}_L(A)$. Therefore,  $\varphi$ defined above is non-trivial.

Finally, we check that $\varphi$ is a $\frac{1}{2}$-derivation. Since
\begin{eqnarray*}
2\varphi(x\circ y)&=&2w\circ (x\circ y)=0,
\end{eqnarray*}
and
\begin{eqnarray*}
\varphi(x)\circ y+x\circ \varphi(y)&=&(w\circ x)\circ y+x\circ (w\circ y)\\
&=&(x\circ w)\circ y+w\circ (x\circ y)-x\circ(w\circ y)+x\circ (w\circ y)\\
&=&(x\circ y)\circ w+w\circ (x\circ y)\\
&=&0,
\end{eqnarray*} $\varphi$ is a non-trivial $\frac{1}{2}$-derivation.
\end{proof}

\begin{thm}
Let $(A,\circ)$ be a solvable Novikov algebra with $\operatorname{Ann}_A(A)=0$. If $\operatorname{Ann}_A(A\circ A)\setminus \operatorname{Ann}_L(A)\neq 0$, then there are non-trivial {\transNovikovpoisson algebra structures} on $(A,\circ)$.
\end{thm}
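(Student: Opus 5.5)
The plan is to build the product $\cdot$ directly out of the non-trivial $\frac{1}{2}$-derivation $\varphi=L_\circ(w)$ constructed in Lemma~\ref{spec}. This follows the pattern of Theorem~\ref{Annnot0}, where the product was manufactured from the $\frac{1}{2}$-derivation of Lemma~\ref{1/2-solvable1}. Fix $w\in \operatorname{Ann}_A(A\circ A)\setminus\operatorname{Ann}_L(A)$.

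First I would record the elementary facts about $w$ that follow from the Novikov identities~\eqref{Novikov1}, \eqref{Novikov2} and the hypotheses.
\begin{enumerate}
\item $w\circ x\in A\circ A$, hence $w\circ(w\circ x)=0=(w\circ x)\circ w$.
\item $(w\circ x)\circ y=(w\circ y)\circ x$.
\item $w\circ w=0$. Indeed, $w\circ w\in A\circ A$; moreover $(w\circ w)\circ y=(w\circ y)\circ w=0$, and by \eqref{Novikov2} $y\circ(w\circ w)=(y\circ w)\circ w+w\circ(y\circ w)-(w\circ y)\circ w=0$. So $w\circ w\in\operatorname{Ann}_A(A)=0$.
\item $\varphi(x\circ y)=0$ and $\varphi(x)\circ y+x\circ\varphi(y)=0$ for all $x,y$, as shown in the proof of Lemma~\ref{spec}.
\end{enumerate}
Since the algebra is solvable, $A\circ A\ne A$, so there is a non-zero linear functional $f$ on $A$ vanishing on $A\circ A$. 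Because $w\notin \operatorname{Ann}_L(A)$, we have $w\notin A\circ A$ (from (a) and $w\circ w=0$ I'd double check this), and I would choose $f$ with $f(w)=1$.

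The ansatz I would try first is the symmetrized rank-type product
\[
x\cdot y=f(x)\,\varphi(y)+f(y)\,\varphi(x)=f(x)\,w\circ y+f(y)\,w\circ x .
\]
It is commutative by construction. For associativity, note that $f$ kills $\operatorname{im}\varphi\subseteq A\circ A$ and $\varphi^2=0$ by (a). Hence
\[
(x\cdot y)\cdot z=f(z)\bigl(f(x)\varphi^2(y)+f(y)\varphi^2(x)\bigr)=0 ,
\]
so the product is associative, and even $A\cdot A\cdot A=0$. The condition~\eqref{transNP1} reduces, via (b), to
\[
f(x)(w\circ y)\circ z+f(y)(w\circ x)\circ z=f(x)(w\circ z)\circ y+f(z)(w\circ x)\circ y .
\]
For~\eqref{transNP2}, the left-hand side $2z\cdot(x\circ y)$ vanishes by (d) and $f(A\circ A)=0$. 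The terms of the right-hand side carrying $f(z)$ cancel by (d). The remaining terms are
\[
f(x)(w\circ z)\circ y+f(y)\,x\circ(w\circ z).
\]
Non-triviality is clear: taking $x=y=w$ gives $w\cdot w=2\,w\circ w=0$, so instead I would test $w\cdot y=\varphi(y)+f(y)\varphi(w)=w\circ y$, which is non-zero for suitable $y$ because $w\notin\operatorname{Ann}_L(A)$.

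The main obstacle is exactly the leftover terms in \eqref{transNP1} and \eqref{transNP2}. These require $(w\circ z)\circ y$ and $x\circ(w\circ z)$ to vanish, at least after restricting the functional. My first attempt would be to shrink the support of $f$. I would replace $f$ by a functional supported on a complement of $A\circ A+\ker\varphi$, or more simply take $x\cdot y=f(x)f(y)\,w\circ u$ for a well-chosen $u$. Then I would re-examine which products $(w\circ u)\circ A$ and $A\circ(w\circ u)$ can be forced to zero using $w\in\operatorname{Ann}_A(A\circ A)$ and the solvability of $A$. By Remark~\ref{rmk-equiv}, $A$ is right nilpotent, so there is a last non-zero term of the chain $A_L^n$. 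Choosing $u$ so that $w\circ u$ lies deep in that chain should kill $(w\circ u)\circ A$. The term $A\circ(w\circ u)$ would then be handled by (d). If this still fails, I would fall back to an inductive choice of $u$ along the right-nilpotency filtration, adjusting $u$ at each step until both leftover terms vanish.
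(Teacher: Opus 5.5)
Your proposal has a genuine gap: it never reaches a product that is actually shown to satisfy \eqref{transNP1} and \eqref{transNP2}.

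\textbf{The first ansatz.} The leftover term in \eqref{transNP1} is $(w\circ x)\circ\bigl(f(y)z-f(z)y\bigr)$. In \eqref{transNP2}, using (b) and (d), it is $(w\circ z)\circ\bigl(f(x)y-f(y)x\bigr)$. Since $f(b)c-f(c)b$ runs over all of $\ker f$, both vanish exactly when $(w\circ A)\circ\ker f=0$. Equivalently, $(w\circ x)\circ y=f(x)f(y)\,c$ for a fixed vector $c$. Nothing in your argument gives this.

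\textbf{The fallback is self-defeating.} Your (d) combined with (b) says
\[
y\circ(w\circ u)=-(w\circ u)\circ y\qquad\text{for all }u,y\in A,
\]
and this uses only $w\in\operatorname{Ann}_A(A\circ A)$. Suppose you pick $u$ with $(w\circ u)\circ A=0$, e.g.\ with $w\circ u$ in the last non-zero term of the chain $A_L^n$. Then also $A\circ(w\circ u)=0$, so $w\circ u\in\operatorname{Ann}_A(A)=0$. Your candidate product $f(x)f(y)\,w\circ u$ is then identically zero, i.e.\ trivial.

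\textbf{The missing idea.} The paper lets $\circ$ itself supply the second factor, instead of an auxiliary functional: put $x\cdot y:=\varphi(x)\circ y=(w\circ x)\circ y$.
\begin{itemize}
\item Commutativity is your (b).
\item Associativity: since $x\cdot y\in A\circ A$, we have $w\circ(x\cdot y)=0$. Hence $(x\cdot y)\cdot z=0=(y\cdot z)\cdot x=x\cdot(y\cdot z)$.
\item \eqref{transNP1} is just \eqref{Novikov1}.
\item \eqref{transNP2}: the left side is $2(w\circ z)\circ(x\circ y)=2(w\circ(x\circ y))\circ z=0$. Applying \eqref{Novikov2} to $(w\circ z,x,y)$ and then \eqref{Novikov1} turns the right side into $(x\circ y)\circ(w\circ z)+(w\circ z)\circ(x\circ y)$. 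This vanishes because $w\circ z\in\operatorname{Ann}_A(A\circ A)$, which is an ideal by Lemma~\ref{Ann}.
\item Non-triviality is where $\operatorname{Ann}_A(A)=0$ enters. If $(w\circ x)\circ y=0$ for all $x,y$, the displayed identity gives $w\circ A\subseteq\operatorname{Ann}_A(A)=0$, contradicting $w\notin\operatorname{Ann}_L(A)$.
\end{itemize}

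\textbf{A remark on your right-nilpotency idea.} Pushed further, it actually proves more. Set $w_k=(\cdots(w\circ u_1)\circ\cdots)\circ u_k$. Each $w_k$ again lies in the ideal $\operatorname{Ann}_A(A\circ A)$. The displayed identity, with $w_{k-1}$ in place of $w$, together with $\operatorname{Ann}_A(A)=0$, shows that every non-zero $w_k$ satisfies $w_k\circ A\neq 0$. Starting from $w\circ A\neq 0$, you therefore get non-zero elements in $A_L^{k+1}$ for every $k$. This is impossible by Remark~\ref{rmk-equiv}. So no such $w$ exists, and the statement holds vacuously. Your proposal, however, carries out neither this contradiction argument nor a valid construction.
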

\begin{proof}
Following the proof of Lemma \ref{spec}, we define a binary operation
$\cdot:A\otimes A\rightarrow A$ by $x\cdot y=(w\circ x)\circ y$ for all $x$, $y\in A$, where $w\in \operatorname{Ann}_A(A\circ A)\setminus \operatorname{Ann}_L(A)$. By Eqs.~\eqref{Novikov1}, $(A,\cdot)$ is commutative. Then we have
\begin{eqnarray*}
&&x\cdot y=(w\circ x)\circ y=(x\circ w)\circ y+w\circ (x\circ y)-x\circ (w\circ y)=-x\circ (w\circ y),
\vspb
\end{eqnarray*}
for all $x,y\in A$. If $x\cdot y=0$ for all $x,y\in A$, we have $w\circ A\subset \operatorname{Ann}_L(A)\cap \operatorname{Ann}_R(A)$.  This directly implies that $w\circ A=0$ due to $\operatorname{Ann}_A(A)=0$, which contradicts with $w\notin \operatorname{Ann}_L(A)$. Hence, $(A,\cdot)$ defined above is not trivial.
Moreover, $(A,\cdot)$ defined above is associative, since
\begin{eqnarray*}
%&&x\cdot y=(w\circ x)\circ y=(w\circ y)\circ x=y\cdot x,\\
&(x\cdot y)\cdot z=(w\circ ((w\circ x)\circ y))\circ z=0,\\
&x\cdot (y\cdot z)=(w\circ x)\circ ((w\circ y)\circ z)=(w\circ ((w\circ y)\circ z))\circ x=0,
\end{eqnarray*}
for all $x,y,z\in A$. \delete{In the second equation above, $(w\circ ((w\circ x)\circ y))\circ z=0$ holds because $(w\circ x)\circ y \in A\circ A$. The third equation holds similarly.\par}

Finally, we need to check that Eqs.~\eqref{transNP1} and ~\eqref{transNP2} hold for $(A,\cdot,\circ)$. Since $\text{Ann}_A(A\circ A)$ is an ideal of $(A, \circ)$ by Lemma~\ref{Ann}, we have $w\circ A\subset \operatorname{Ann}_A(A\circ A)$. Therefore, we have $(x\circ y)\circ (w\circ z)=0$ for all $x$, $y$, $z\in A$. Then for all $x,y,z\in A$, we obtain
\begin{eqnarray*}
&&(x\cdot y)\circ z=\big((w\circ x)\circ y\big)\circ z=\big((w\circ x)\circ z\big)\circ y=(x\cdot z)\circ y,
\end{eqnarray*}
\begin{eqnarray*}
&&2z\cdot(x\circ y)=2(w\circ z)\circ (x\circ y)=2(w\circ  (x\circ y))\circ z=0,
\end{eqnarray*}
and \begin{eqnarray*}
(z\cdot x)\circ y+x\circ (z\cdot y)&=&((w\circ z)\circ x)\circ y+x\circ ((w\circ z)\circ y)\\
&&=(x\circ (w\circ z))\circ y+(w\circ z)\circ (x\circ y)-x\circ ((w\circ z)\circ y)+x\circ ((w\circ z)\circ y)\\
&&=(x\circ y)\circ (w\circ z)+(w\circ (x\circ y))\circ z\\
&&=0.
\end{eqnarray*}
\delete{In the last equation above, since $\text{Ann}_A(A\circ A)$ is an ideal of $(A, \circ)$ by Lemma~\ref{Ann}, we have $w\circ A\subset \operatorname{Ann}_A(A\circ A)$, i.e., $(x\circ y)\circ (w\circ z)=0$.} Then the proof is completed.
\end{proof}

\section{Simple {\transNovikovpoisson algebras}}
In this section, we investigate simple transposed Novikov-Poisson algebras. We show that if $(A, \cdot,\circ)$ is a non-trivial simple transposed Novikov-Poisson algebra, then $(A, \circ)$ is simple. Based on this result, transposed Novikov-Poisson algebras on some simple Novikov algebras are characterized.
\subsection{On simple {\transNovikovpoisson} algebras}
%We first give some background of simple Novikov.
Let $(A, \ast)$ be a Novikov algebra or a commutative associative algebra. $(A,\ast)$ is called {\bf simple} if $A \ast A \neq \{0\}$ and the only ideals of $A$ are {\bf trivial ideals}: $\{0\}$ and $A$.
\begin{defi}
\delete{The algebra $A$ is called {\bf simple} if $A \circ A \neq \{0\}$ and the only ideals of $A$ are the {\bf trivial ideals}: $\{0\}$, $A$.} An {\bf ideal} of a {\transNovikovpoisson algebra} $(A,\cdot,\circ)$ is a subspace \( I \) such that $I$ is an ideal of both $(A, \circ)$ and $(A,\cdot)$.
\delete{\( I\cdot A \subset I \) , \( I\circ A \subset I \) and $A\circ I\subset I$.} $0$ and $A$ are called {\bf trivial ideals} of $(A,\cdot,\circ)$.
A nontrivial {\transNovikovpoisson algebra} $(A,\cdot,\circ)$ is called {\bf simple} if $(A, \cdot, \circ)$ has no non-trivial ideals.
\end{defi}

\begin{lem}\label{simple-prefect}
Let $(A,\cdot,\circ)$ be a non-trivial {\transNovikovpoisson algebra}\delete{ with $A\circ A \neq0$}. If $A\circ A\neq A$, then both $(A,\cdot)$ and $(A,\cdot,\circ)$ are not simple.
\end{lem}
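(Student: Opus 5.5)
The plan is to exhibit one explicit non-trivial ideal that works for both structures at once. The natural candidate is $I:=A\circ A$, the linear span of all products $x\circ y$. Everything then reduces to three checks: $I$ is an ideal of $(A,\circ)$, $I$ is an ideal of $(A,\cdot)$, and $I$ is neither $0$ nor $A$.

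First, $I=A\circ A$ is a two-sided ideal of $(A,\circ)$, since $A\circ I\subseteq A\circ A$ and $I\circ A\subseteq A\circ A$; this holds in any algebra. Second, $I$ is closed under multiplication by $A$ under $\cdot$. For all $x,y,z\in A$, Eq.~\eqref{transNP2} gives
\begin{eqnarray*}
z\cdot(x\circ y)=\frac{1}{2}\big((z\cdot x)\circ y+x\circ(z\cdot y)\big)\in A\circ A .
\end{eqnarray*}
This uses that the characteristic of ${\bf k}$ is not $2$. Since $\cdot$ is commutative, $I$ is an ideal of $(A,\cdot)$, and hence an ideal of $(A,\cdot,\circ)$.

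Third, $I$ is non-trivial. We have $I\neq A$ by hypothesis. Since $(A,\cdot,\circ)$ is non-trivial, the Novikov product is not identically zero, so $I\neq 0$. Therefore $I$ is a proper non-zero ideal of both $(A,\cdot)$ and $(A,\cdot,\circ)$, and neither is simple.

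There is no real obstacle. The only point needing care is that ``non-trivial'' is used exactly to guarantee $A\circ A\neq 0$. The key step is the use of Eq.~\eqref{transNP2} to show that $A\circ A$ absorbs the commutative product, which is where the factor $\frac12$ and the assumption on the characteristic enter.
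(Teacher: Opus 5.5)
Your proof is correct and follows the paper's argument: the paper also takes $A\circ A$ as the witness ideal and uses Eq.~\eqref{transNP2} to show $(A\circ A)\cdot A\subset A\circ A$. You also state explicitly that non-triviality of the algebra gives $A\circ A\neq 0$, a step the paper leaves implicit.
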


\begin{proof}
By Eq.~\eqref{transNP2}, we have
\begin{eqnarray*}
&&(A\circ A)\cdot A\subset(A\cdot A)\circ A+A\circ (A\cdot A)\subset A\circ A.
\end{eqnarray*}
Therefore, $A\circ A$ is a non-trivial ideal of $(A,\cdot)$. Note that $A\circ A$ is an ideal of $(A,\circ)$. Therefore, $A\circ A$ is a non-trivial ideal of $(A,\cdot,\circ)$.
\end{proof}

\begin{pro}\label{simpleT-prefect}
Let $(A,\cdot,\circ)$ be a non-trivial simple {\transNovikovpoisson algebra}. Then we have $A\circ A=A$.
\end{pro}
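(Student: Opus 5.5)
The plan is to argue by contraposition using Lemma \ref{simple-prefect}, and then check the degenerate cases the lemma does not cover. Suppose $(A,\cdot,\circ)$ is a non-trivial simple transposed Novikov-Poisson algebra with $A\circ A\neq A$. Here non-trivial means that neither $\cdot$ nor $\circ$ is identically zero. Lemma \ref{simple-prefect} then applies directly. It shows that $A\circ A$ is an ideal of $(A,\circ)$, since it is clearly a two-sided ideal. By Eq.~\eqref{transNP2} we also have $(A\circ A)\cdot A\subset A\circ A$, so $A\circ A$ is an ideal of $(A,\cdot)$ as well. Hence $A\circ A$ is an ideal of $(A,\cdot,\circ)$, and simplicity forces $A\circ A\in\{0,A\}$.

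The remaining step is to rule out $A\circ A=0$. This case is excluded by the hypothesis that the algebra is non-trivial. In the paper's terminology, a trivial transposed Novikov-Poisson algebra is one where $\cdot$ or $\circ$ is the zero operation, so non-triviality gives $A\circ A\neq 0$. Then $A\circ A$ is a nonzero ideal, and simplicity gives $A\circ A=A$.

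The step needing the most care is matching the precise meaning of ``non-trivial'' in Lemma \ref{simple-prefect}. There, ``non-trivial ideal'' should mean $0\neq A\circ A\neq A$, so the lemma only gives non-simplicity once $A\circ A\neq 0$ is known. I would therefore state explicitly that $A\circ A\neq0$ comes from non-triviality of the algebra (the Novikov product is not zero), and then invoke the lemma to conclude the contradiction. If one instead takes ``non-trivial simple'' to allow $\circ=0$, then the statement could fail, for example for a simple commutative associative algebra with zero Novikov product. So the proof will rely on the convention that non-trivial excludes $\circ\equiv0$.
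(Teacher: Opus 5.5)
Your proposal is correct and follows the paper's route: the paper deduces the proposition directly from Lemma \ref{simple-prefect}, and that lemma's proof is exactly your argument that $A\circ A$ is an ideal of $(A,\circ)$ and, by Eq.~\eqref{transNP2}, also an ideal of $(A,\cdot)$. Your explicit remark that $A\circ A\neq 0$ follows from non-triviality (i.e.\ $\circ\not\equiv 0$) correctly supplies a point the paper leaves implicit in its phrase ``non-trivial ideal''.
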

\begin{proof}
It follows directly from Lemma~\ref{simple-prefect}.
\end{proof}

\delete{Recall that a {\bf transposed quasi-ideal} of a transposed Poisson algebra $(A,\cdot,[,\:])$ introduced in ~\cite{simple-simple}, which is a non-trivial subspace \( I \) of \( A \) such that \( [A\cdot I,A] \subset I \) and \( [A, I] \subset I \). It is the transposed version of the notion of a quasi-ideal of a Poisson algebra. Similarly, we can define the transposed quasi-ideal of a {\transNovikovpoisson algebra}.\vspa}
Motivated by the study of simple transposed Poisson algebras in ~\cite{simple-simple}, we introduce the definition of transposed quasi-ideals of transposed Novikov-Poisson algebras.
\begin{defi}
A {\bf transposed quasi-ideal} of a {\transNovikovpoisson algebra} $(A,\cdot,\circ)$ is a non-trivial subspace \( I \) of \( A \) such that $I$ is an ideal of $(A,\circ)$ and \( (A\cdot I)\circ A \subset I \), $A\circ (A\cdot I)\subset I$.
\end{defi}
\begin{lem} \label{lem-1}
Let  $(A,\cdot,\circ)$ be a non-trivial simple transposed Novikov-Poisson algebra. Then a transposed quasi-ideal $I$ of $(A,\cdot,\circ)$ is an ideal of $(A,\cdot,\circ)$.
\end{lem}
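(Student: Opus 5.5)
The plan is to build from $I$ an honest ideal of $(A,\cdot,\circ)$, apply simplicity to it, and then use perfectness $A\circ A=A$ to force $I$ to be everything. Concretely, set
\[
T:=I+A\cdot I .
\]

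First, I check that $T$ is an ideal of $(A,\cdot)$. Since $(A,\cdot)$ is associative, $A\cdot(A\cdot I)=(A\cdot A)\cdot I\subset A\cdot I$. Trivially $A\cdot I\subset A\cdot I$. Commutativity handles the two-sidedness. So $A\cdot T\subset T$.

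Next, I check that $T$ is an ideal of $(A,\circ)$.
\begin{itemize}
\item On the summand $I$: this holds because $I$ is an ideal of $(A,\circ)$ by definition of a transposed quasi-ideal.
\item On the summand $A\cdot I$: the defining conditions give $(A\cdot I)\circ A\subset I\subset T$ and $A\circ(A\cdot I)\subset I\subset T$.
\end{itemize}
Hence $T$ is an ideal of the transposed Novikov-Poisson algebra $(A,\cdot,\circ)$.

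Since $I$ is non-trivial, $I\neq 0$, so $T\neq 0$. Simplicity then forces $T=A$, that is, $A=I+A\cdot I$.

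Now I invoke Proposition~\ref{simpleT-prefect}, which gives $A\circ A=A$ because $(A,\cdot,\circ)$ is non-trivial and simple. Expanding,
\[
A=A\circ A=(I+A\cdot I)\circ A\subset I\circ A+(A\cdot I)\circ A\subset I .
\]
The first piece lies in $I$ since $I$ is an ideal of $(A,\circ)$. The second lies in $I$ by the quasi-ideal condition. Thus $I=A$, which is trivially an ideal of $(A,\cdot,\circ)$.

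Put differently, the argument shows that a simple non-trivial transposed Novikov-Poisson algebra admits no proper non-zero transposed quasi-ideals at all, so the claimed conclusion holds in every case. The only step that needs care is verifying that $A\cdot I$ is absorbed under $\circ$ on both sides. This is exactly what the two extra conditions in the definition of a transposed quasi-ideal provide, so I expect no real obstacle. If the intended reading of ``non-trivial'' for $I$ were only $I\neq 0$, the same argument still yields $I=A$.
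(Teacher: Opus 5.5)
Your argument is correct, but it takes a different route from the paper. The paper builds no auxiliary ideal. It combines perfectness with the compatibility identity \eqref{transNP2} (taking $z\in I$) to absorb the commutative product directly:
\[
A\cdot I=(A\circ A)\cdot I\subset (A\cdot I)\circ A+A\circ (A\cdot I)\subset I .
\]
So $I$ is itself an ideal of $(A,\cdot)$. (The paper's display has $(A\cdot I)\circ I$, a typo for $(A\cdot I)\circ A$.)

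You instead enlarge $I$ to $T=I+A\cdot I$. This is an ideal of $(A,\cdot,\circ)$ using only associativity of $\cdot$ and the two quasi-ideal axioms. You then apply simplicity a second time to get $T=A$, and use perfectness to squeeze $A=T\circ A\subset I$.

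The paper's route buys generality. It uses simplicity only through $A\circ A=A$, so it shows that in any perfect transposed Novikov-Poisson algebra every transposed quasi-ideal is an ideal. Together with Lemma~\ref{ideal-ideal}, this means the ideals of $(A,\circ)$ and of $(A,\cdot,\circ)$ then coincide. The same absorption trick is reused in the corollary on direct sums of simple ideals.

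Your route needs the full simplicity hypothesis and says nothing outside the simple case. In exchange, it reaches the sharper conclusion $I=A$ in one step, in effect proving Lemma~\ref{noquasiideal} directly. It also invokes \eqref{transNP2} only implicitly, through Proposition~\ref{simpleT-prefect}. As you note, under the paper's reading of a non-trivial subspace ($0\neq I\neq A$) this makes the statement vacuously true, which is logically fine.
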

\begin{proof}
Note that by the definition of transposed quasi-ideals, $I$ is an ideal of $(A,\circ)$. Moreover, by Theorem \ref{simpleT-prefect} and Eq.~\eqref{transNP2}, we have $A\cdot I=(A\circ A)\cdot I\subset(A\cdot I)\circ I+A\circ (A\cdot I)\subset I$. Therefore,  $I$ is also an ideal of $(A,\cdot)$. Then the conclusion holds.
\end{proof}
\begin{lem}\label{noquasiideal}
A non-trivial simple {\transNovikovpoisson algebra} $(A,\cdot,\circ)$ contains no transposed quasi-ideals.
\end{lem}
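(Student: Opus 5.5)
The proof is a direct reduction to Lemma \ref{lem-1} together with the definition of simplicity. Suppose, for a contradiction, that $(A,\cdot,\circ)$ is a non-trivial simple transposed Novikov-Poisson algebra containing a transposed quasi-ideal $I$. By definition, $I$ is a \emph{non-trivial} subspace, so $I\neq 0$ and $I\neq A$. It is also an ideal of $(A,\circ)$ and satisfies $(A\cdot I)\circ A\subset I$ and $A\circ(A\cdot I)\subset I$.

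The first step is to invoke Lemma \ref{lem-1}, which says that $I$ is an ideal of $(A,\cdot,\circ)$. The key input there is worth recording. By Proposition \ref{simpleT-prefect} we have $A=A\circ A$. For $z\in I$ and $x,y\in A$, Eq.~\eqref{transNP2} gives
\begin{eqnarray*}
2z\cdot(x\circ y)=(z\cdot x)\circ y+x\circ(z\cdot y).
\end{eqnarray*}
The right-hand side lies in $(A\cdot I)\circ A+A\circ(A\cdot I)\subset I$. Since the characteristic of ${\bf k}$ is not $2$, this yields $I\cdot A=I\cdot(A\circ A)\subset I$. Hence $I$ is an ideal of $(A,\cdot)$ as well as of $(A,\circ)$.

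The second step is immediate. Because $(A,\cdot,\circ)$ is simple, its only ideals are $0$ and $A$. This contradicts the non-triviality of $I$, so no transposed quasi-ideal can exist.

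There is no substantial obstacle here. The only point needing care is the use of $A\circ A=A$ (Proposition \ref{simpleT-prefect}), which requires the algebra to be non-trivial and simple, and the division by $2$, which is allowed under the standing assumption that the characteristic of ${\bf k}$ is not $2$.
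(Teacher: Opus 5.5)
Your proof is correct and follows the paper's route exactly: the paper's proof is the one-line observation that Lemma~\ref{lem-1} turns any transposed quasi-ideal into a non-trivial ideal of $(A,\cdot,\circ)$, which contradicts simplicity. Your recap of Lemma~\ref{lem-1} is also right, and slightly more careful than the paper's displayed inclusion, which writes $(A\cdot I)\circ I$ where $(A\cdot I)\circ A$ is meant.
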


\begin{proof}
It follows directly from Lemma \ref{lem-1}.
\end{proof}
\delete{Suppose \( I \) is a transposed quasi-ideal of {\transNovikovpoisson algebra} \((A,\cdot,\circ)\). Let \( I' \) be a maximal subspace such that \( A \circ I' \subset I \) and \( I' \circ A \subset I \).
Firstly, we have
\begin{eqnarray*}
(A \cdot I) \circ A \subset I,\quad A \circ (A \cdot I) \subset I,\quad A \circ I \subset I,\quad I \circ A \subset I  \vspa
\end{eqnarray*}
by the definition of transposed quasi-ideal.
Then by the maximality of \( I' \), we have $0 \neq I \subset I' $ and \( A \cdot I \subset I' \). Next, we have $A\circ A\neq 0$ since $(A\cdot,\circ)$ is simple. And we obtain $A=A\circ A$ by Theorem \ref{simpleT-prefect}. By Eq.~\eqref{Tid1}, we have\vspa
\begin{eqnarray*}
&&A\cdot I'=(A\circ A)\cdot I'=(A\circ I')\cdot A\subset I\cdot A\subset I',\\
&&A\circ I'\subset I\subset I',\quad I'\circ A\subset I\subset I'.
\vspa
\end{eqnarray*}
And we have $I'\neq A$, if not $A=A\circ A=A\circ I'\subset I$, which contradicts the assumption. Thus,  \( I' \) is a non-trivial ideal of {\transNovikovpoisson algebra} \((A,\cdot,\circ)\), which contradicts the simplicity of \((A,\cdot,\circ)\).
\vspa}
\delete{\end{proof}
In fact, for a simple {\transNovikovpoisson algebra} $(A,\cdot,\circ)$, every transposed quasi-ideal $I$ is a non-trivial ideal of $(A,\cdot,\circ)$. To prove this conclusion, it is enough to show that $I$ is an ideal of $(A,\cdot)$. By Theorem \ref{simpleT-prefect} and Eq.~\eqref{transNP2}, we have $A\cdot I=(A\circ A)\cdot I\subset(A\cdot I)\circ I+A\circ (A\cdot I)\subset I$.
\vspd}

\begin{lem}\label{ideal-ideal}
Let $(A,\cdot,\circ)$ be a {\transNovikovpoisson algebra} with $A\circ A=A$. Then every non-trivial ideal of $(A,\circ)$ is a transposed quasi-ideal of $(A,\cdot,\circ)$.
\end{lem}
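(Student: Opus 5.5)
The plan is to verify the two defining inclusions of a transposed quasi-ideal directly. Let $I$ be a non-trivial ideal of $(A,\circ)$. Then $I$ is already a non-trivial subspace and an ideal of $(A,\circ)$, so it remains to show $(A\cdot I)\circ A\subset I$ and $A\circ (A\cdot I)\subset I$. The only inputs are Eq.~\eqref{transNP1}, the identity \eqref{Tid2}, and the hypothesis $A\circ A=A$.

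\emph{First inclusion.} For $a,b\in A$ and $i\in I$, Eq.~\eqref{transNP1} lets us move $i$ into the right-hand slot of $\circ$:
\begin{eqnarray*}
(a\cdot i)\circ b=(a\cdot b)\circ i\in A\circ I\subset I.
\end{eqnarray*}
This step does not use $A\circ A=A$.

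\emph{Second inclusion.} This is where perfectness enters. A naive attempt via Eq.~\eqref{transNP2} gives $b\circ(a\cdot i)=2a\cdot(b\circ i)-(a\cdot b)\circ i$. The term $a\cdot(b\circ i)$ is not obviously in $I$, so I would avoid this route.

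Instead, by $A\circ A=A$ it suffices to treat elements of the form $(c\circ d)\circ(a\cdot i)$ with $a,c,d\in A$ and $i\in I$, and extend by linearity. Identity \eqref{Tid2}, $(x\circ y)\circ(h\cdot z)=(x\circ z)\circ(h\cdot y)$, with $x=c$, $y=d$, $h=a$, $z=i$, gives
\begin{eqnarray*}
(c\circ d)\circ(a\cdot i)=(c\circ i)\circ(a\cdot d)\in (A\circ I)\circ A\subset I\circ A\subset I.
\end{eqnarray*}
Hence $A\circ(A\cdot I)=(A\circ A)\circ(A\cdot I)\subset I$.

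The main obstacle is this second inclusion. Once one thinks of rewriting the left factor as an element of $A\circ A$ and applying \eqref{Tid2}, which is the special case of Lemma~\ref{1/2id} for the $\frac{1}{2}$-derivation $L_\cdot(a)$, the argument closes immediately.
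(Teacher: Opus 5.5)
Your proof is correct. The first inclusion, $(a\cdot i)\circ b=(a\cdot b)\circ i\in A\circ I\subset I$ via \eqref{transNP1}, is exactly the paper's argument.

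For the second inclusion you take a genuinely different route. The paper first applies \eqref{Novikov1} to write $(c\circ d)\circ(a\cdot i)=(c\circ y)\circ d$ with $y=a\cdot i$. It then expands by left-symmetry \eqref{Novikov2}: $(c\circ y)\circ d=c\circ(y\circ d)+(y\circ c)\circ d-y\circ(c\circ d)$. Each term lies in $I$ by the first inclusion $(A\cdot I)\circ A\subset I$ and the two-sided ideal property.

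So the paper never uses \eqref{transNP2} in this lemma. Its argument in fact shows a general fact about perfect Novikov algebras: if $I$ is an ideal and $J$ is a subspace with $J\circ A\subset I$, then $A\circ J\subset I$. Hence the second quasi-ideal condition follows formally from the first.

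Your argument instead draws on the compatibility \eqref{transNP2}, through the $\frac{1}{2}$-derivation $L_\cdot(a)$ and identity \eqref{Tid2}. This gives the single-term identity $(c\circ d)\circ(a\cdot i)=(c\circ i)\circ(a\cdot d)$. That is shorter and independent of the first inclusion, but it depends on the previously established identity \eqref{Tid2} rather than only on the defining Novikov axioms.

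A minor precision: \eqref{Tid2} is not literally the specialization of \eqref{1/2id1} to $\varphi=L_\cdot(h)$, which reads $(x\circ y)\circ(h\cdot z)=(x\circ(h\cdot y))\circ z$. One further application of \eqref{Novikov1} is needed to reach \eqref{Tid2}, which is harmless.
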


\begin{proof}
Suppose that $I$ is a non-trivial ideal of $(A,\circ)$. Since $A\circ I\subset I$ and $I\circ A\subset I$, by Eqs.~\eqref{Novikov1} and~\eqref{transNP1}, we have
\begin{eqnarray*}
&&(A\cdot I)\circ A\subset (A\cdot A)\circ I\subset A\circ I\subset I,
\end{eqnarray*}
and
\begin{eqnarray*}
A\circ (A\cdot I)&=&(A\circ A)\circ (A\cdot I)=(A\circ (A\cdot I))\circ  A\subset ((A\cdot I)\circ A)\circ A\\
&&\quad+A\circ ((A\cdot I)\circ A)-(A\cdot I)\circ (A\circ A)\subset I\circ A+A\circ I-(A\cdot I)\circ A\subset I.
\end{eqnarray*}
Therefore, $I$ is a transposed quasi ideal of $(A,\cdot,\circ)$.
\end{proof}

\begin{thm}\label{simplesimple}
Let $(A,\cdot,\circ)$ be a non-trivial simple {\transNovikovpoisson algebra}. Then the Novikov algebra $(A,\circ)$ is simple.
\end{thm}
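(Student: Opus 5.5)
The plan is to combine Proposition \ref{simpleT-prefect}, Lemma \ref{ideal-ideal} and Lemma \ref{noquasiideal}.

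First, $(A,\cdot,\circ)$ is non-trivial and simple, so Proposition \ref{simpleT-prefect} gives $A\circ A=A$. In particular $A\circ A\neq\{0\}$: otherwise $A=A\circ A=0$, which is impossible for a simple algebra. So the first requirement in the definition of a simple Novikov algebra holds.

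Next, suppose for contradiction that $(A,\circ)$ has a non-trivial ideal $I$, that is, $0\neq I\neq A$. Since $A\circ A=A$, Lemma \ref{ideal-ideal} applies and shows that $I$ is a transposed quasi-ideal of $(A,\cdot,\circ)$. But Lemma \ref{noquasiideal} says a non-trivial simple transposed Novikov-Poisson algebra has no transposed quasi-ideals. Equivalently, by Lemma \ref{lem-1}, $I$ would be a non-trivial ideal of $(A,\cdot,\circ)$, contradicting simplicity. Hence the only ideals of $(A,\circ)$ are $0$ and $A$, and $(A,\circ)$ is simple.

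The main point needing care is the step $A\cdot I\subset I$ used in Lemma \ref{lem-1}. It comes from writing $A\cdot I=(A\circ A)\cdot I$ and applying Eq.~\eqref{transNP2}, which requires the perfectness $A\circ A=A$ supplied by Proposition \ref{simpleT-prefect}. The other ingredient is the inclusion $A\circ(A\cdot I)\subset I$ in Lemma \ref{ideal-ideal}, obtained from Eqs.~\eqref{Novikov1}, \eqref{Novikov2} and \eqref{transNP1}. Since both lemmas are already available, the proof should amount to chaining these results together.
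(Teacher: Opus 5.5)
Your proposal is correct and is essentially the paper's proof: Proposition~\ref{simpleT-prefect} gives $A\circ A=A$, Lemma~\ref{ideal-ideal} turns any non-trivial ideal of $(A,\circ)$ into a transposed quasi-ideal, and this contradicts Lemma~\ref{noquasiideal}. Your explicit check that $A\circ A\neq\{0\}$, which the definition of a simple Novikov algebra requires, is a small point the paper leaves implicit.
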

\begin{proof}
By Proposition ~\ref{simpleT-prefect}, we have $A\circ A=A$.  By Lemma~\ref{ideal-ideal}, if $I$ is a non-trivial ideal of $(A,\circ)$, then $I$ is a transposed quasi-ideal of $(A,\cdot,\circ)$, which contradicts with Lemma~\ref{noquasiideal}.
\end{proof}

\begin{cor}
Let $(A,\cdot,\circ)$ be a {\transNovikovpoisson algebra}. Then the associated Novikov algebra $(A,\circ)$ is a direct sum of simple ideals if and only if the {\transNovikovpoisson} algebra is a direct sum of simple ideals.
\vspa
\end{cor}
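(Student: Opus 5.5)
The plan is to prove the two implications separately. In both directions the key tool is the identity $A\cdot I=(A\circ A)\cdot I$, which holds whenever $A\circ A=A$, combined with Eq.~\eqref{transNP2}. We also use the elementary fact that, in a direct sum $A=\bigoplus_i A_i$ of ideals for $\circ$, we have $A_i\circ A_j\subset A_i\cap A_j=0$ for $i\neq j$. Consequently, any ideal of the summand $(A_i,\circ)$ is automatically an ideal of $(A,\circ)$.

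For the direction ``$\Rightarrow$'', suppose $A=\bigoplus_i A_i$ with each $(A_i,\circ)$ a simple Novikov ideal. Since $A_i\circ A_i$ is a non-zero ideal of $A_i$, simplicity gives $A_i\circ A_i=A_i$, and hence $A\circ A=A$. The main point is to show that each $A_i$ is also an ideal of $(A,\cdot)$. By Eq.~\eqref{transNP2},
\[
A\cdot A_i=A\cdot(A_i\circ A_i)\subset (A\cdot A_i)\circ A_i+A_i\circ(A\cdot A_i)\subset A_i,
\]
because $A_i$ is a two-sided $\circ$-ideal of $A$. Thus each $A_i$ is an ideal of $(A,\cdot,\circ)$, and it inherits a transposed Novikov-Poisson structure. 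Now let $J$ be an ideal of $(A_i,\cdot,\circ)$. Then $J$ is in particular an ideal of the simple Novikov algebra $(A_i,\circ)$, so $J=0$ or $J=A_i$. Hence $(A_i,\cdot,\circ)$ has no non-trivial ideals.

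For the direction ``$\Leftarrow$'', suppose $A=\bigoplus_i A_i$ with each $A_i$ a simple (in particular non-trivial) transposed Novikov-Poisson ideal. By Theorem~\ref{simplesimple}, each $(A_i,\circ)$ is a simple Novikov algebra. By the direct-sum remark above, the $A_i$ are ideals of $(A,\circ)$ whose sum is direct, so $(A,\circ)$ is a direct sum of simple ideals.

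The step I expect to be the main obstacle is not computational but a matter of conventions. In ``$\Rightarrow$'', simplicity of the summand $(A_i,\cdot,\circ)$ formally requires it to be non-trivial, and the product $\cdot$ might vanish on $A_i$. In ``$\Leftarrow$'', a summand on which $\circ$ vanishes would spoil the conclusion. I would handle both by reading ``simple'' for the summands as in the definition of simple transposed Novikov-Poisson algebras given before Lemma~\ref{simple-prefect}. Concretely, this means no non-trivial ideals together with $A_i\circ A_i\neq 0$, which is exactly what is needed to invoke Proposition~\ref{simpleT-prefect} and Theorem~\ref{simplesimple}. I would state this interpretation explicitly at the start of the proof.
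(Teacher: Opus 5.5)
Your proof is correct and is essentially the paper's argument. For ``$\Rightarrow$'' you use the same inclusion $A\cdot A_i=A\cdot(A_i\circ A_i)\subset (A\cdot A_i)\circ A_i+A_i\circ(A\cdot A_i)\subset A_i$ from Eq.~\eqref{transNP2}, and for ``$\Leftarrow$'' you invoke Theorem~\ref{simplesimple}. Your explicit convention that a simple summand has no non-trivial ideals and satisfies $A_i\circ A_i\neq 0$ is a worthwhile addition: the paper silently skips the non-triviality of $\cdot$ on the summands, and under the paper's literal definition (both products non-zero) the forward direction fails, for instance for a simple Novikov algebra with $\cdot=0$.
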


\begin{proof}
Suppose that $A=\oplus_{i\in S} I_i$, where each $I_i$ is a simple ideal of $(A,\circ)$. Thus, we have $I_i\circ I_i=I_i$. By Eq.~\eqref{transNP2}, we obtain
\begin{eqnarray*}
A\cdot I_i=A\cdot(I_i\circ I_i)\subset (A\cdot I_i)\circ I_i+I_i\circ(A\cdot I_i)\subset A\circ I_i+I_i\circ A\subset I_i.
\end{eqnarray*}
Thus, each $I_i$ is an ideal of  $(A,\cdot,\circ)$. Finally, each $I_i$ as the ideal of $(A,\cdot,\circ)$ is simple, since $I_i$ is simple as the ideal of $(A,\circ)$. Conversely, this follows directly from Theorem~\ref{simplesimple}.
\end{proof}

It was shown in \cite{youxian0} that a finite-dimensional simple Novikov algebra over an algebraically closed field with characteristic 0 is one-dimensional. Note that a finite-dimensional simple commutative associative algebra over an algebraically closed field with characteristic 0 is also one-dimensional. Therefore, by Theorem~\ref{simplesimple}, we have the following corollary.
\begin{cor}\label{Cor-dim-1}
Let ${\bf k}$ be an algebraically closed field with characteristic 0. Then any simple finite dimensional {\transNovikovpoisson algebras} over ${\bf k}$ is one-dimensional.
\end{cor}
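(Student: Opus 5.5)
The plan is to split according to whether the transposed Novikov-Poisson algebra $(A,\cdot,\circ)$ is trivial or not. Here simple means non-trivial as an algebra with no non-trivial ideals.

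\emph{Non-trivial case.} Suppose neither product is identically zero. Theorem~\ref{simplesimple} gives that $(A,\circ)$ is a simple Novikov algebra. Since $A$ is finite dimensional and ${\bf k}$ is algebraically closed of characteristic $0$, the result of \cite{youxian0} quoted above yields $\dim A=1$.

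\emph{Trivial case.} Here exactly one of the products is zero; the algebra cannot be entirely zero, since simplicity requires it to be a nontrivial algebra.

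If $\cdot=0$, then the ideals of $(A,\cdot,\circ)$ are exactly the ideals of $(A,\circ)$, because every subspace is an ideal for the zero product. So $(A,\circ)$ is a simple Novikov algebra, with $A\circ A\neq 0$ by simplicity. Again \cite{youxian0} gives $\dim A=1$.

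If $\circ=0$, then the ideals are exactly the ideals of $(A,\cdot)$, so $(A,\cdot)$ is a simple finite-dimensional commutative associative algebra with $A\cdot A\neq0$. I would justify the one-dimensionality as follows.
\begin{enumerate}
\item Since $A\cdot A$ is an ideal and nonzero, $A\cdot A=A$, so $A$ is not nilpotent.
\item The Jacobson radical $J$ is a nilpotent ideal, hence $J\neq A$, so $J=0$.
\item By Wedderburn, $A$ is a finite product of fields. Simplicity forces a single field, which is a finite extension of ${\bf k}$.
\item Since ${\bf k}$ is algebraically closed, that field is ${\bf k}$ itself, so $\dim A=1$.
\end{enumerate}

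The only delicate points are bookkeeping. One must ensure the definition of "simple" excludes the zero-product situations; this is handled because a simple algebra must not be entirely trivial. One must also check that $A\circ A\neq0$ (respectively $A\cdot A\neq0$) in the trivial subcases, so that the cited classification results apply. None of these steps is a real obstacle.
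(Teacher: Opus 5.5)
Your proposal is correct and follows the paper's route. In the non-trivial case you combine Theorem~\ref{simplesimple} with the result of \cite{youxian0}, and in the degenerate cases you use that simple finite-dimensional Novikov (respectively commutative associative) algebras over ${\bf k}$ are one-dimensional. The paper just asserts the commutative associative fact, so your radical-plus-Wedderburn argument supplies a step it leaves implicit.
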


\subsection{Transposed Novikov-Poisson algebras with simple Novikov algebras}
By the discussion above, for classifying non-trivial simple transposed Novikov-Poisson algebras, we only need to consider the compatible commutative associative algebra structures on simple Novikov algebras.

%This subsection investigates the possibility of defining {\transNovikovpoisson} algebras on simple Novikov algebras.\par
If ${\bf k}$ is an algebraically closed field of characteristic zero, by Corollary \ref{Cor-dim-1}, finite-dimensional non-trivial simple transposed Novikov-Poisson algebras up to isomorphism are of the form that given in Example \ref{ex-dim-1}.

Next, let ${\bf k}$ be  an algebraically closed field  with characteristic $p > 2$ and $(A,\circ)$ be a finite-dimensional simple Novikov algebra with $\text{dim}(A)\geq 2$. By ~\cite{simplesushuyouxian},  $A$ has a basis $\{y_{-1}, y_0, \ldots, y_{p^n - 2}\}$ for some positive integer $n$ such that
\begin{eqnarray}\label{eq-simple-Nov}
&& y_i \circ y_j = \dbinom{i + j + 1}{j} y_{i+j} + \delta_{i, -1} \delta_{j, -1} a y_{p^n - 2} + \delta_{i, -1} \delta_{j, 0} b y_{p^n - 2}
\end{eqnarray}
for $i, j = -1, 0, \ldots, p^n - 2$, where $a, b \in \mathbf{k}$ are constants.

\begin{thm}\label{finitesimplethm}
There are no non-trivial $\frac{1}{2}$-derivations of finite-dimensional simple Novikov algebras with dimensions $\geq 1$ over an algebraically closed field  ${\bf k}$ with characteristic $p > 2$.
\end{thm}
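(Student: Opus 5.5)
We treat the case $\operatorname{dim}(A)=1$ first and separately. Here $A={\bf k}e$, and simplicity forces $e\circ e=\lambda e$ with $\lambda\neq 0$. Every linear map on a one-dimensional space is a scalar multiple of the identity, so every $\frac{1}{2}$-derivation is trivial. For $\operatorname{dim}(A)\geq 2$ we use the basis $\{y_{-1},y_0,\ldots,y_{p^n-2}\}$ with products (\ref{eq-simple-Nov}), write $N=p^n-2$, and let $\varphi$ be a $\frac{1}{2}$-derivation with $\varphi(y_i)=\sum_k c_{i,k}y_k$. The aim is to show that $\varphi$ is the scalar $c_{0,0}$.

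The first step exploits $y_0$, which acts almost like a unit. From (\ref{eq-simple-Nov}) we read off three facts:
\begin{itemize}
\item $y_j\circ y_0=y_j$ for $j\geq 0$;
\item $y_{-1}\circ y_0=y_{-1}+by_N$;
\item $y_0\circ y_j=(j+1)y_j$.
\end{itemize}
Applying the defining identity to the pair $(y_0,y_0)$ and comparing coefficients gives $kc_{0,k}=0$ for every $k\geq 0$. The $y_{-1}$-coefficient gives $c_{0,-1}=0$, and this also removes the $b$-term. Hence $\varphi(y_0)=\sum_{m}c_{0,mp}y_{mp}$, supported on indices divisible by $p$.

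Next we apply the identity to the pairs $(y_i,y_0)$. Since right multiplication by $y_0$ is the identity up to the $by_N$ correction on the $y_{-1}$-component, we obtain
\[
\varphi(y_i)=c_{i,-1}\,b\,y_N+y_i\circ\varphi(y_0)\qquad\text{for } i\geq 0,
\]
and an analogous formula for $i=-1$. So $\varphi$ is almost determined by $\varphi(y_0)$, up to finitely many correction coefficients. Substituting $y_i\circ y_{mp}=\binom{i+mp+1}{mp}y_{i+mp}$ then makes $\varphi(y_i)$ explicit.

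The second step plugs these formulas into the identity for the pairs $(y_{-1},y_j)$ and $(y_i,y_1)$. These pairs are natural because $y_{-1}$ and $y_1$ shift indices by $\pm1$, and $y_{-1}\circ y_j=y_{j-1}$ up to the $a,b$ corrections. Comparing coefficients of $y_{i+j+mp}$ on both sides yields equations of the shape
\[
\Bigl(2\tbinom{i+j+1}{j}\tbinom{i+j+mp+1}{mp}-\tbinom{i+mp+1}{mp}\tbinom{i+j+mp+1}{j}-\tbinom{j+mp+1}{mp}\tbinom{i+j+mp+1}{i+mp}\Bigr)c_{0,mp}=0 .
\]
Using Lucas' theorem, we choose $i,j$ small (for instance $i=-1$ and $j=0$ or $1$, or $i=j=p-1$) so that the bracket is nonzero mod $p$ for each $m\geq1$. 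This forces $c_{0,mp}=0$ for $m\geq 1$. The remaining correction coefficients $c_{i,-1}b$, and those coming from $a$, are killed in the same way: apply the identity to $(y_{-1},y_{-1})$ and $(y_{-1},y_0)$ and compare the $y_N$-coefficient. The conclusion is $\varphi=c_{0,0}\,\mathrm{id}$.

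The main obstacle is the Lucas-theorem bookkeeping, namely finding, for each $m$ with $1\le m\le p^{n-1}-1$, a pair $(i,j)$ whose bracket above is a unit mod $p$. The first thing to try is $i=-1$, exploiting $y_{-1}\circ y_{mp}=y_{mp-1}$. Combined with the two expressions for $\varphi(y_{mp-1})$, this should give a relation like $\tfrac12 c_{0,mp}=0$ directly. Only if that choice fails would we fall back on $j=p-1$, where the carries in base $p$ are easiest to track.
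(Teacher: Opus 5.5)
Your reduction is sound. The pair $(y_0,y_0)$ gives $c_{0,-1}=0$ and $k\,c_{0,k}=0$. The pairs $(y_i,y_0)$ give $\varphi(y_i)=y_i\circ\varphi(y_0)$ for $i\ge 0$; the coefficient $c_{i,-1}$ vanishes at once by comparing $y_{-1}$-coefficients. They also give $\varphi(y_{-1})=c_{0,0}y_{-1}+\sum_{m\ge1}c_{0,mp}y_{mp-1}$.

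The gap is the decisive step $c_{0,mp}=0$ for $m\ge 1$, which you leave as ``Lucas bookkeeping'' with candidate pairs that do not work.
\begin{itemize}
\item The pair $(y_{-1},y_{mp})$ is satisfied identically by your formulas, so it cannot produce $\tfrac12 c_{0,mp}=0$. The same is true of $(y_{-1},y_1)$.
\item In general, $(y_{-1},y_j)$ with $j\ge 1$ yields only $\binom{j+mp}{j+1}c_{0,mp}=0$. So $j=p-1$ gives just $m\,c_{0,mp}=0$.
\item The pair $(y_{p-1},y_{p-1})$ gives $m(m+1)c_{0,mp}=0$.
\end{itemize}
Both of the last relations are useless when $p\mid m$, which already occurs for $n\ge 3$. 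In fact, if $p\mid m$, then for every $i,j\ge 0$ with $i+j+1<p^2$, Lucas' theorem reduces your bracket to $2\binom{i+j+1}{j}-\binom{i+j+1}{j}-\binom{i+j+1}{j}=0$. So no ``small'' pair can finish the argument. Note also that the last binomial in your display should be $\binom{i+j+mp+1}{j+mp}$, not $\binom{i+j+mp+1}{i+mp}$. With the misprinted version, small pairs would wrongly appear to succeed.

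The missing idea is to use the pair $(y_{-1},y_{-1})$ and compare all coefficients, not only the $y_N$-coefficient. Four facts make this work:
\begin{itemize}
\item $y_{-1}\circ y_{-1}=ay_N$;
\item $\varphi(y_N)=y_N\circ\varphi(y_0)=c_{0,0}y_N$;
\item $y_k\circ y_{-1}=0$ for $k\ge 0$;
\item $y_{-1}\circ y_k=y_{k-1}$ for $k\ge 1$.
\end{itemize}
With these, your formula for $\varphi(y_{-1})$ turns the identity into $\sum_{m\ge1}c_{0,mp}\,y_{mp-2}=0$, and you are done.

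This is essentially the paper's argument, which needs no binomial arithmetic at all. First, $(y_N,y_0)$ gives $\varphi(y_N)=\alpha_{0,0}y_N$. Then $(y_{-1},y_{-1})$ forces $\varphi(y_{-1})\in{\bf k}y_{-1}$, because left multiplication by $y_{-1}$ shifts indices down injectively. Next, $(y_{-1},y_0)$ forces $\varphi(y_0)=\alpha_{0,0}y_0=\alpha_{-1,-1}y_0$. Finally, $(y_i,y_0)$ gives $\varphi(y_i)=\alpha_{0,0}y_i$.

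If you prefer to stay inside your own framework, use $(y_{-1},y_j)$ with $j=p^{s+1}-1$, where $p^s$ is the exact power of $p$ dividing $m$. Then $\binom{mp+p^{s+1}-1}{p^{s+1}}$ is congruent mod $p$ to the lowest nonzero base-$p$ digit of $m$. Moreover, $mp+j-1\le p^n-2$ still holds, so this choice also kills $c_{0,mp}$.
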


\begin{proof}
Let $(A, \circ)$ be the simple Novikov algebra defined by Eq. (\ref{eq-simple-Nov}) and $\varphi$ be a $\frac{1}{2}$-derivation of $(A,\circ)$. Set $2\varphi(y_{i} ) =\sum_{i =-1}^{p^n - 2} \alpha_{i, j} y_j$, where $\alpha_{i,j}\in {\bf k}$.
Let $i=p^n-2,j=0$. Then we have
\begin{eqnarray*}
&&0= 2\varphi(y_{p^n-2} \circ y_{0})- \varphi(y_{p^n-2}) \circ y_{0} -y_{p^n-2} \circ \varphi(y_{0})\\
&&=2 \left( \sum_{k = -1}^{p^n - 2} \alpha_{p^n-2, k} y_k \right)- \left( \sum_{k = -1}^{p^n - 2} \alpha_{p^n-2, k} y_k \right) \circ y_{0} -y_{p^n-2}\circ \left( \sum_{k = -1}^{p^n - 2} \alpha_{0, k} y_k \right)\\
&&=2 \left( \sum_{k = -1}^{p^n - 2} \alpha_{p^n-2, k} y_k \right)- \sum_{k = -1}^{p^n - 2} \alpha_{p^n-2, k} y_k-\alpha_{p^n-2, -1}by_{p^n-2}-\alpha_{0,0}y_{p^n-2}.
\end{eqnarray*}
Thus, we have $\varphi(y_{p^n-2})=\alpha_{p^n-2,p^n-2}y_{p^n-2}=\alpha_{0,0}y_{p^n-2}$. Similarly, when $i=j=0$, we have\vspa
\begin{eqnarray*}
&&0=2\varphi(y_{0} \circ y_{0})-\varphi(y_{0}) \circ y_{0} - y_{0} \circ \varphi(y_{0})\\
&&=2 \left( \sum_{k = -1}^{p^n - 2} \alpha_{0, k} y_k \right)-\sum_{k = 0}^{p^n - 2} \alpha_{0, k}(k+1) y_k- \alpha_{0,-1}by_{p^n-2}-\left( \sum_{k = -1}^{p^n - 2} \alpha_{0, k} y_k \right).
\end{eqnarray*}
Comparing the coefficient of $y_{-1}$, we have $\alpha_{0,-1}=0$. Let $i=-1,j=-1$. Then we have\vspa
\begin{eqnarray*}
&&0=2\varphi(y_{-1} \circ y_{-1})-\varphi(y_{-1}) \circ y_{-1} -y_{-1} \circ \varphi(y_{-1})\\
&&=2a\alpha_{p^n-2,p^n-2}y_{p^n-2}-\sum_{k = -1}^{p^n - 2} \alpha_{-1, k} y_k\circ y_{-1}-y_{-1}\circ \sum_{k = -1}^{p^n - 2} \alpha_{-1, k} y_k\\
&&=2a\alpha_{p^n-2,p^n-2}y_{p^n-2}-2a\alpha_{-1,-1}y_{p^n-2}-\alpha_{-1,0}by_{p^n-2}- \sum_{k = 0}^{p^n - 2} \alpha_{-1, k} y_{k-1}.
\end{eqnarray*}
Thus we obtain $\alpha_{-1,k}=0$ when $k\neq -1$. Let $i=-1,j=0$. Then we have\vspa
\begin{eqnarray*}
&&0=2\varphi(y_{-1}\circ y_0)-y_{-1}\circ \varphi(y_0)-\varphi(y_{-1})\circ y_0\\
&&=2b\alpha_{0,0}y_{p^n-2}+2\alpha_{-1,-1}y_{-1}-\alpha_{0,0}by_{p^n-2}-\sum_{k = 0}^{p^n - 2}\alpha_{0,k}y_{k-1}-\alpha_{-1,-1}y_{-1}-\alpha_{-1,-1}by_{p^n-2}.
\end{eqnarray*}
Comparing the coefficients of $y_k,\:k=-1,0,...,p^n-1$, we have $\varphi(y_0)=\alpha_{0,0}y_0=\alpha_{-1,-1}y_0$. Finally, when $i\neq 0,-1$ and $j=0$, we have\vspa
\begin{eqnarray*}
&&0=2\varphi(y_i\circ y_0)-y_i\circ \varphi(y_0)-\varphi(y_i)\circ y_0=2\sum_{k = -1}^{p^n - 2} \alpha_{i, k}y_{k}-\alpha_{0, 0}y_{i}-\alpha_{i,-1}by_{p^n-2}-\sum_{k = -1}^{p^n - 2} \alpha_{i, k}y_{k}.
\end{eqnarray*}
Then $\varphi(y_i)=\alpha_{i,i}y_i=\alpha_{0,0}y_i$. Hence, $\varphi(y_i)=\alpha_{0,0}y_i$, for all $i=-1,0,...,p^n-2$, i.e., $\varphi$ is trivial.
\end{proof}

\begin{cor}
Let $(A,\circ )$ be a finite-dimensional simple Novikov algebra over an algebraically closed field  $\mathbf{k}$ with characteristic $p > 2$, where $\text{dim}(A)\geq 2$. Then there are no non-trivial {\transNovikovpoisson algebra} structures on $(A,\circ)$.
\vspa
\end{cor}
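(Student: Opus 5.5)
This should follow by combining Theorem~\ref{finitesimplethm} with the argument of Theorem~\ref{1/2-trans}. Let $(A,\cdot,\circ)$ be a transposed Novikov-Poisson algebra whose Novikov part $(A,\circ)$ is the given simple algebra, with $\dim(A)\geq 2$. By the classification recalled before Theorem~\ref{finitesimplethm}, $(A,\circ)$ has a basis $\{y_{-1},y_0,\ldots,y_{p^n-2}\}$ with products given by Eq.~\eqref{eq-simple-Nov}.

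\emph{Step 1.} By Lemma~\ref{1/2derivationintheT}, every left multiplication $L_\cdot(x)$ and every right multiplication $R_\cdot(y)$ is a $\frac{1}{2}$-derivation of $(A,\circ)$.

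\emph{Step 2.} By Theorem~\ref{finitesimplethm}, each such map is trivial. So there are scalars $\kappa_x$ with $L_\cdot(x)=\kappa_x\,\mathrm{id}$, and likewise $R_\cdot(y)=\kappa'_y\,\mathrm{id}$.

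\emph{Step 3.} Since $\dim(A)\geq 2$, Theorem~\ref{1/2-trans} applies directly and gives $\cdot=0$. Concretely: for any nonzero $x$, pick $y$ linearly independent from $x$. Commutativity gives $\kappa_x y=x\cdot y=y\cdot x=\kappa_y x$, so $\kappa_x=0$. Hence $x\cdot z=0$ for all $z$, i.e.\ the commutative associative product is zero.

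\emph{Step 4.} The Novikov product $\circ$ is nonzero, since a simple Novikov algebra satisfies $A\circ A\neq 0$. So the only transposed Novikov-Poisson structures are the trivial ones with $\cdot=0$.

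There is essentially no obstacle, since all the work sits in Theorem~\ref{finitesimplethm}. The one point to check is that the hypotheses of Theorem~\ref{1/2-trans} are met, namely $\dim(A)\geq 2$. This is assumed in the statement and is needed: in dimension one, Example~\ref{ex-dim-1} gives nontrivial structures.
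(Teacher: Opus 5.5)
Your proposal is correct and is the paper's proof: the paper derives the corollary directly from Theorem~\ref{finitesimplethm} (the algebra has only trivial $\frac{1}{2}$-derivations) together with Theorem~\ref{1/2-trans} (with $\dim(A)\geq 2$ this forces $\cdot=0$). Your Step~3 just writes out the proof of Theorem~\ref{1/2-trans}.
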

\begin{proof}
It follows directly from Theorems~\ref{1/2-trans} and~\ref{finitesimplethm}.
\end{proof}

Let $\mathbf{k}$ be a field of characteristic $0$, and let $(A,\circ)$ be an infinite-dimensional simple Novikov algebra over $\mathbf{k}$ containing a nonzero element $e\in A$, such that the left multiplication operator $L_\circ(e)$ satisfies:
\begin{eqnarray*}
L_\circ(e)(e) \in \mathbf{k}e, \quad \dim(\text{span}\{(L_\circ(e))^n(x) \mid n \in \mathbb{N}\})< \infty \quad \text{for any } x \in A.
\end{eqnarray*}
By Osborn's classification in \cite{simplewuxian} and its simplified formulas, there exists an additive subgroup $\Delta$ of $\mathbf{k}$ and a basis $\{x_{\alpha,j} \mid \alpha \in \Delta, \, j \in J\}$ with $J = \{0\}$ or $\mathbb{N}$, $\{0\} \neq \Delta + J \subset \mathbf{k}$, such that
\begin{eqnarray}
&&\label{infintefomula}x_{\alpha, i}\circ x_{\beta, j} = (\beta + b)x_{\alpha + \beta, i+j} +j x_{\alpha + \beta, i+j - 1}.
\end{eqnarray}

\begin{thm}\label{infinitesimplethm}
Let $(A,\circ)$ be an infinite-dimensional simple Novikov algebra defined above. Then there are no non-trivial $\frac{1}{2}$-derivations of $(A,\circ)$.
\vspa
\end{thm}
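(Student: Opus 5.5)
Let $\varphi$ be a $\frac{1}{2}$-derivation of $(A,\circ)$. The plan is to find an element that acts on $A$ like a right identity and use it to force $\varphi$ to be a left multiplication, then show the multiplier is a scalar, exactly as in the proof of Theorem~\ref{finitesimplethm}.

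\emph{Step 1: a right identity.} By Eq.~(\ref{infintefomula}),
\[
x_{\alpha,i}\circ x_{0,0}=b\,x_{\alpha,i}.
\]
If $b\neq 0$, set $e_0:=b^{-1}x_{0,0}$; then $e_0$ is a right identity, i.e. $R_\circ(e_0)=\mathrm{id}$. Applying the $\frac12$-derivation identity to the pair $(x,e_0)$ gives
\[
2\varphi(x)=\varphi(x)\circ e_0+x\circ\varphi(e_0)=\varphi(x)+x\circ\varphi(e_0),
\]
so $\varphi(x)=x\circ u$ with $u:=\varphi(e_0)$. Thus $\varphi=R_\circ(u)$.

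If $b=0$ there is no right identity, and I would handle this case separately. I would use $x_{\alpha,i}\circ x_{\beta,0}=\beta\,x_{\alpha+\beta,i}$ with $\beta\neq 0$ (possible since $\Delta+J\neq\{0\}$; if $\Delta=0$ then $J=\mathbb N$ and one works with the $j$-terms instead). Writing $\varphi(x_{\alpha,i})$ in the basis and comparing coefficients should give a recursion pinning $\varphi$ down in the same way.

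\emph{Step 2: $u$ is a multiple of $e_0$.} Substitute $\varphi=R_\circ(u)$ into the defining identity:
\[
2(x\circ y)\circ u=(x\circ u)\circ y+x\circ(y\circ u).
\]
By Eq.~(\ref{Novikov1}) we have $(x\circ u)\circ y=(x\circ y)\circ u$, so this reduces to
\[
(x\circ y)\circ u=x\circ(y\circ u)\qquad\text{for all } x,y,
\]
that is, $u$ lies in the right nucleus. Expand $u=\sum c_{\gamma,k}x_{\gamma,k}$ and apply this identity to basis elements $x=x_{\alpha,i}$, $y=x_{\beta,j}$.
\begin{itemize}
\item The left side has coefficient $(\beta+b)(\gamma+b)$ on the leading term.
\item The right side has coefficient $(\gamma+b)(\beta+\gamma+b)$.
\end{itemize}
The difference is $(\gamma+b)\gamma$, which must vanish for suitable $\beta$. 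This forces $\gamma=0$ or $\gamma=-b$. The $j$-lowering terms should then force $k=0$. The case $\gamma=-b\neq 0$ should be excluded using the lower-order term $j\,x_{\alpha+\beta,i+j-1}$ together with a further choice of $\beta$. The conclusion is $u=c\,x_{0,0}$, hence $\varphi=cb\cdot\mathrm{id}$, which is trivial.

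\emph{Main obstacle.} The hardest part is the bookkeeping in Step 2 when $J=\mathbb N$, where the lowering terms mix degrees, together with the degenerate case $b=0$. For the $J=\mathbb N$ case I would induct on the maximal $k$ appearing in the support of $u$: compare top-degree coefficients first, then move to lower degrees.
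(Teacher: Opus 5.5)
Your $b\neq 0$ argument is correct and is essentially the paper's: $e_0=b^{-1}x_{0,0}$ is a right identity, so $\varphi=R_\circ(u)$ with $u=\varphi(e_0)$. There is one inaccuracy in Step~2. When $-b\in\Delta$, the component of $u$ along $x_{-b,0}$ cannot be excluded by the right-nucleus identity. For $k=0$ there are no lowering terms to use, and in fact $A\circ x_{-b,0}=0$, so $x_{-b,0}$ really does lie in the right nucleus. This does no harm, because $R_\circ(x_{-b,0})=0$ and so $\varphi=cb\,\mathrm{id}$ anyway. A shorter route, and the one the paper takes, is to evaluate the identity at $(e_0,e_0)$. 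This gives $e_0\circ u=u$, and a one-line triangularity argument then forces $u\in{\bf k}x_{0,0}$.

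The genuine gap is the case $b=0$, which you leave at ``comparing coefficients should give a recursion''. Two things go wrong there.

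\emph{The premise is false for $J=\mathbb N$.} When $J=\mathbb N$ there is a right identity: $x_{\alpha,i}\circ x_{0,1}=x_{\alpha,i}$. So your Step~1 applies verbatim with $e_0=x_{0,1}$. The identity at $(x_{0,1},x_{0,1})$ gives $u=x_{0,1}\circ u$, hence $u\in{\bf k}x_{0,1}$ and $\varphi$ is scalar. This is exactly how the paper handles $b=0$ in its Cases~1 and~3.

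\emph{The Witt-type case is left unproved.} In the remaining case $J=\{0\}$, $b=0$, i.e.\ $x_\alpha\circ x_\beta=\beta x_{\alpha+\beta}$, there genuinely is no right identity, and the computation you wave at is not routine. Use the identities at $(x_0,x_0)$ and $(x_0,x_\alpha)$, where $L_\circ(x_0)$ acts by $x_\lambda\mapsto\lambda x_\lambda$. These only yield $\varphi(x_0)=ax_0$ and $\varphi(x_\alpha)=ax_\alpha+a_{\alpha,2\alpha}x_{2\alpha}$ for $\alpha\neq0$. The reason is that the constraint $(2\alpha-\lambda)a_{\alpha,\lambda}=0$ leaves the weight $\lambda=2\alpha$ free. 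A further identity is needed to kill these components. For example, take the identity at $(x_\alpha,x_\beta)$ with $\alpha,\beta,\alpha+\beta\neq 0$ and compare coefficients of $x_{2(\alpha+\beta)}$. This index is distinct from $\alpha+\beta$, $2\alpha+\beta$ and $\alpha+2\beta$, so you get $2\beta\,a_{\alpha+\beta,2(\alpha+\beta)}=0$. Since $\Delta$ is infinite in characteristic $0$, every $\lambda\neq0$ is such a sum, so $a_{\lambda,2\lambda}=0$ for all $\lambda\neq0$. Without this step your proposal does not prove the theorem for the Witt-type simple Novikov algebra.
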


\begin{proof}
Let $\varphi$ be a $\frac{1}{2}$-derivation of $(A,\circ)$. For all $\alpha \in \Delta$ and $i\in \mathbb{N}$, we set \vspa
\begin{eqnarray}
&&\label{infinitesimple1/2}\varphi(x_{\alpha,i})=\sum_{\lambda\in \Delta,l\in \mathbb{N}}a_{\alpha,i}^{\lambda,l}x_{\lambda,l},
\end{eqnarray}
where $a_{\alpha,i}^{\lambda,l}\in {\bf k}$.\par

%%¦Ì¨²¨°?¨¤¨¤
Case 1: When \(\Delta = 0\) and $J=\mathbb{N}$,
we simply denote $x_{0,i}$ and $a_{0,i}^{0,j}$ by $x_i$ and $a_{i,j}$ respectively. So Eqs.~\eqref{infintefomula} and~\eqref{infinitesimple1/2} are equivalent to $x_i\circ x_j=bx_{i+j}+jx_{i+j-1}$ and $\varphi(x_i)=\sum_{j\in \mathbb{N}}a_{i,j}x_j$ respectively. Let $i=j=0$. Then we have\vspa
\begin{eqnarray*}
&&0=2\varphi(x_0\circ x_0)-x_0\circ \varphi(x_0)-\varphi(x_0)\circ x_0=2b\sum_{j\in \mathbb{N}}a_{0,j}x_j-2b\sum_{j\in \mathbb{N}}a_{0,j}x_j-\sum_{j\in \mathbb{N}}a_{0,j}jx_{j-1},
\end{eqnarray*}
which implies that $\varphi(x_0)=a_{0,0}x_0$.\par
If $b\neq0$, we consider\vspa
\begin{eqnarray*}
&&0=2\varphi(x_i\circ x_0)-x_i\circ \varphi(x_0)-\varphi(x_i)\circ x_0=2b\sum_{j\in \mathbb{N}}a_{i,j}x_j-ba_{0,0}x_i-b\sum_{j\in \mathbb{N}}a_{i,j}x_{j},
\end{eqnarray*}
which implies that $\varphi(x_i)=a_{i,i}x_i=a_{0,0}x_i$, for all $i\in \mathbb{N}$. Thus, $\varphi$ is trivial.\par
If $b=0$, then Eq.~\eqref{infintefomula} is equivalent to $x_i\circ x_j=jx_{i+j-1}$. Set $i=j=1$. Then we have\vspa
\begin{eqnarray*}
&&0=2\varphi(x_1\circ x_1)-x_1\circ \varphi(x_1)-\varphi(x_1)\circ x_1=2\sum_{j\in \mathbb{N}}a_{1,j}x_j-\sum_{j\in \mathbb{N}}a_{1,j}jx_{j}-\sum_{j\in \mathbb{N}}a_{1,j}x_{j},
\end{eqnarray*}
which implies that $\varphi(x_1)=a_{1,1}x_1$. Next, we consider\vspa
\begin{eqnarray*}
&&0=2\varphi(x_i\circ x_1)-x_i\circ \varphi(x_1)-\varphi(x_i)\circ x_1=2\sum_{j\in \mathbb{N}}a_{i,j}x_j-a_{1,1}x_{i}-\sum_{j\in \mathbb{N}}a_{i,j}x_{j},
\end{eqnarray*}
which implies that $\varphi(x_i)=a_{i,i}x_i=a_{1,1}x_i$, for all $i\in \mathbb{N}$. Hence, $\varphi$ is trivial.\par

%%¦Ì¨²?t¨¤¨¤
Case 2: When $\Delta\neq0$ and $J=0$, we simply denote $x_{\alpha,i}=x_{\alpha}$ and $a_{\alpha,0}^{\lambda,0}=a_{\alpha,\lambda}$. So Eqs.~\eqref{infintefomula} and~\eqref{infinitesimple1/2} are equivalent to $x_\alpha\circ x_\beta=(\beta+b)x_{\alpha+\beta}$. and $\varphi(x_{\alpha})=\sum_{\lambda\in \Delta }a_{\alpha,\lambda}x_{\lambda}$ respectively. Let $\alpha=\beta=0$ in Eq.~\eqref{infintefomula}. Then we have \vspa
\begin{eqnarray*}
&&0=2\varphi(x_0\circ x_0)-x_0\circ \varphi(x_0)-\varphi(x_0)\circ x_0=2b\sum_{\lambda\in \Delta}a_{0,\lambda}x_{\lambda}-\sum_{\lambda\in \Delta}a_{0,\lambda}(\lambda+b)x_{\lambda}-\sum_{\lambda\in \Delta}a_{0,\lambda}bx_{\lambda},
\end{eqnarray*}
which implies that $\varphi(x_0)=a_{0,0}x_0$.\par
If $b\neq0$, we consider\vspa
\begin{eqnarray*}
&&0=2\varphi(x_{\alpha}\circ x_0)-x_{\alpha}\circ \varphi(x_{0})-\varphi(x_{\alpha})\circ x_0=2b\sum_{\lambda\in \Delta}a_{\alpha,\lambda}x_{\lambda}-a_{0,0}bx_{\alpha}-\sum_{\lambda\in \Delta}a_{\alpha,\lambda}bx_{\lambda},
\end{eqnarray*}
which implies that $\varphi(x_{\alpha})=a_{\alpha,\alpha}x_{\alpha}=a_{0,0}x_{\alpha}$, for all $\alpha\in \Delta$. Hence, $\varphi$ is trivial.

If $b=0$, then Eq.~\eqref{infintefomula} is equivalent to $x_\alpha\circ x_\beta=\beta x_{\alpha+\beta}$. For all $\alpha\in \Delta$ with $\alpha\neq0$, we have\vspa
\begin{eqnarray*}
&&0=2\varphi(x_{0}\circ x_{\alpha})-x_{0}\circ \varphi(x_{\alpha})-\varphi(x_{0})\circ x_{\alpha}=2\alpha \sum_{\lambda\in \Delta}a_{\alpha,\lambda}x_{\lambda}-\sum_{\lambda\in \Delta}\lambda a_{\alpha,\lambda} x_{\lambda}-\alpha a_{0,0}x_{\alpha},
\end{eqnarray*}
which implies that $\varphi(x_{\alpha})=a_{\alpha,\alpha}x_{\alpha}+a_{\alpha,2\alpha}x_{2\alpha}=a_{0,0}x_{\alpha}+a_{\alpha,2\alpha}x_{2\alpha}$, for all $\alpha\in \Delta$ with $\alpha\neq0$. Next, we consider\vspa
\begin{eqnarray*}
&&0=2\varphi(x_{\alpha}\circ x_{\beta})-x_{\alpha}\circ \varphi(x_{\beta})-\varphi(x_{\alpha})\circ x_{\beta}\\
&&=2\beta(a_{\alpha+\beta,\alpha+\beta}x_{\alpha+\beta}+a_{\alpha+\beta,2(\alpha+\beta)}x_{2(\alpha+\beta)})-\beta a_{\beta,\beta}x_{\alpha+\beta}-2\beta a_{\beta,2\beta}x_{\alpha+2\beta}-\beta a_{\alpha,\alpha}x_{\alpha+\beta}-\beta a_{\alpha,2\alpha}x_{2\alpha+\beta},
\end{eqnarray*}
for all $\alpha,\beta\in \Delta$ with $\alpha,\beta,\alpha+\beta \neq 0$. Then by comparing the coefficient of $x_{2(\alpha+\beta)}$, we obtain $a_{\lambda,2\lambda}=0$  for all $\lambda \in \Delta$ with $\lambda\neq 0$. Hence, $\varphi$ is trivial.\par

%%¦Ì¨²¨¨y¨¤¨¤
Case 3: When $\Delta\neq0$ and $J=\mathbb{N}$, then $|\Delta|=\infty$ since $\operatorname{char}({\bf k})=0$. First, let $\alpha=\beta=0$ and $i=j=0$ in Eq.~\eqref{infintefomula}. Then
\begin{eqnarray*}
0&=&2\varphi(x_{0,0} \circ x_{0,0})-x_{0,0} \circ \varphi(x_{0,0})-\varphi(x_{0,0}) \circ x_{0,0}\\
&=&2b\sum_{\lambda\in \Delta,l\in \mathbb{N}} a_{0,0}^{\lambda,l}x_{\lambda,l}-\sum_{\lambda\in \Delta,l\in \mathbb{N}} a_{0,0}^{\lambda,l} \big( (\lambda + b)x_{\lambda,l} + l x_{\lambda, l- 1} \big)-\sum_{\lambda\in \Delta,l\in \mathbb{N}} a_{0,0}^{\lambda,l} \cdot b x_{\lambda,l}.
\end{eqnarray*}
Comparing the coefficients of $x_{\lambda,l}$, we have
\begin{eqnarray}\label{insimple1}
&&a_{0,0}^{\lambda,l+1} = \frac{-\lambda}{l+1} a_{0,0}^{\lambda,l}.
\end{eqnarray}
Then we obtain $a_{0,0}^{\lambda,l} = \frac{(-\lambda)^l}{l!} a_{0,0}^{\lambda,0}$. In particular, we obtain $a_{0,0}^{0,l} = 0 \ (l \geq 1)$. Since Eq.~\eqref{infinitesimple1/2} is a finite sum,  we have $a_{0,0}^{\lambda,0}=0$, for all $\lambda \in \Delta$ with $\lambda\neq0$, i.e., $\varphi(x_{0,0})=a_{0,0}^{0,0}x_{0,0}$.\par
If $b\neq0$, we consider \vspa
\begin{eqnarray*}
0&=&2\varphi(x_{\alpha,i} \circ x_{0,0})-x_{\alpha,i}\circ \varphi(x_{0,0})-\varphi(x_{\alpha,i})\circ x_{0,0}\\
&=&2b\sum_{\lambda\in \Delta,l\in \mathbb{N}}a_{\alpha,i}^{\lambda,l}x_{\lambda,l}-a_{0,0}^{0,0}bx_{\alpha,i}-\sum_{\lambda\in \Delta,l\in \mathbb{N}}ba_{\alpha,i}^{\lambda,l}x_{\lambda,l},
\end{eqnarray*}
which implies that $\varphi(x_{\alpha,i})=a_{\alpha,i}^{\alpha,i}x_{\alpha,i}=a_{0,0}^{0,0}x_{\alpha,i}$, for all $\alpha\in \Delta,i\in \mathbb{N}$. Thus, $\varphi$ is trivial.\par

If $b=0$, Eq.~\eqref{infintefomula} is equivalent to $x_{\alpha,i}\circ x_{\beta,j}=\beta x_{\alpha+\beta,i+j}+jx_{\alpha+\beta,i+j-1}$. Let $i=j=1$ and $\alpha=\beta=0$. Then we have\vspa
\begin{eqnarray*}
&&0=2\varphi(x_{0,1}\circ x_{0,1})-\varphi(x_{0,1})\circ x_{0,1}-x_{0,1}\circ \varphi (x_{0,1})\\
&&=2\sum_{\lambda\in \Delta,l\in \mathbb{N}}a_{0,1}^{\lambda,l}x_{\lambda,l}-\sum_{\lambda\in \Delta,l\in \mathbb{N}}a_{0,1}^{\lambda,l}x_{\lambda,l}-\sum_{\lambda\in \Delta,l\in \mathbb{N}}a_{0,1}^{\lambda,l}(\lambda x_{\lambda,l+1}+lx_{\lambda,l}).
\end{eqnarray*}
Comparing the coefficients of $x_{0,l}$, $x_{\lambda,0}$ and $x_{\lambda,l+1}$, for all $\alpha \in \Delta$ and $l\in \mathbb{N}$, we obtain that $a_{0,1}^{0,l}$=0 when $l\neq1$, $a_{0,1}^{\lambda,0}=0$ and $la_{0,1}^{\lambda,l+1}=-\lambda a_{0,1}^{\lambda,l}$ for all $l\in \mathbb{N}$ and $\lambda\in \Delta$. Since Eq.~\eqref{infinitesimple1/2} is a finite sum, we have $a_{0,1}^{\lambda,l}=0$ when $l,\lambda \neq 0$. Thus, we have $\varphi(x_{0,1})=a_{0,1}^{0,1}x_{0,1}$. Next, we consider\vspa
\begin{eqnarray*}
&&0=\varphi(x_{\alpha,i}\circ x_{0,1})-\varphi(x_{\alpha,i})\circ x_{0,1}-x_{\alpha,i}\circ \varphi (x_{0,1})=2\sum_{\lambda\in \Delta,l\in \mathbb{N}}a_{\alpha,i}^{\lambda,l}x_{\lambda,l}-\sum_{\lambda\in \Delta,l\in \mathbb{N}}a_{\alpha,i}^{\lambda,l}x_{\lambda,l}-a_{0,1}^{0,1}x_{\alpha,i},
\end{eqnarray*}
which implies that $\varphi(x_{\alpha, i})=a_{\alpha,i}x_{\alpha,i}=a_{0,1}^{0,1}x_{\alpha,i}$, for all $\alpha \in \Delta$ and $i\in \mathbb{N}$. Hence, $\varphi$ is trivial.
\vspa
\end{proof}

\begin{cor}
Let $(A,\circ)$ be the infinite-dimensional simple Novikov algebra over a field ${\bf k}$ of characteristic $0$ defined by Eq. (\ref{eq-simple-Nov}). Then there are no non-trivial {\transNovikovpoisson algebra} structures on $(A,\circ)$.
\end{cor}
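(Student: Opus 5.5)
This corollary is a direct combination of Theorem~\ref{1/2-trans} with the theorem just proved on $\frac{1}{2}$-derivations. One remark on the statement first: it cites Eq.~(\ref{eq-simple-Nov}), but that equation describes the finite-dimensional algebras in characteristic $p>2$. In characteristic $0$, the infinite-dimensional simple Novikov algebra meant here is the one given by Eq.~\eqref{infintefomula}. I therefore read the statement as referring to Eq.~\eqref{infintefomula}, and the proof applies to that algebra.

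Let $(A,\circ)$ be the infinite-dimensional simple Novikov algebra of Eq.~\eqref{infintefomula}, and suppose $(A,\cdot,\circ)$ is a transposed Novikov-Poisson algebra. By Theorem~\ref{infinitesimplethm}, every $\frac{1}{2}$-derivation of $(A,\circ)$ is trivial, that is, a scalar multiple of the identity. Since $A$ is infinite-dimensional, $\operatorname{dim}(A)\geq 2$ holds.

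Theorem~\ref{1/2-trans} now applies directly, so every transposed Novikov-Poisson structure on $(A,\circ)$ is trivial. The reason, as in that proof, is as follows. By Lemma~\ref{1/2derivationintheT}, $L_\cdot(x)$ is a $\frac{1}{2}$-derivation for every $x\in A$, so $L_\cdot(x)=\kappa_x\,\mathrm{id}$ for some $\kappa_x\in{\bf k}$. For linearly independent $x,y$, commutativity gives $\kappa_x y = x\cdot y = y\cdot x = \kappa_y x$, which forces $\kappa_x=\kappa_y=0$. Every $x$ lies in some linearly independent pair, so $\kappa_x=0$ for all $x$ and hence $x\cdot y=0$ for all $x,y\in A$.

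The only point that needs care is that Theorem~\ref{1/2-trans} involves no finite-dimensionality assumption. Its argument uses only the existence of two linearly independent elements, so it holds unchanged for infinite-dimensional $A$. There is no real obstacle beyond this check.
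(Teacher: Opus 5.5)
Your proposal is correct and matches the paper's proof, which simply combines Theorem~\ref{1/2-trans} with Theorem~\ref{infinitesimplethm}. You are also right that the intended algebra is the one given by Eq.~\eqref{infintefomula} rather than Eq.~\eqref{eq-simple-Nov}, and that Theorem~\ref{1/2-trans} needs only $\operatorname{dim}(A)\geq 2$, not finite-dimensionality.
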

\begin{proof}
\vspa
It follows directly from Theorems~\ref{1/2-trans} and~\ref{infinitesimplethm}.
\end{proof}

\noindent {\bf Acknowledgments.}
This research is supported by
Natural Science Foundation of China (No. 12171129) and  Zhejiang
Provincial Natural Science Foundation of China (No. Z25A010006).

\smallskip

\noindent
{\bf Declaration of interests. } The authors have no conflicts of interest to disclose.

\smallskip

\noindent
{\bf Data availability. } No new data were created or analyzed in this study.

\end{document}